\documentclass[12pt,a4paper]{amsart}
\usepackage[utf8]{inputenc}
\usepackage{amsfonts}
\usepackage{amsmath, amssymb}
\usepackage{amsthm}
\usepackage{cases}
\usepackage[margin=.8in]{geometry}
\usepackage{tkz-euclide}
\usepackage{comment}
\usepackage{lineno}
\usepackage{enumitem}

\newtheorem{theorem}{Theorem}[section]

\newtheorem{defin}[theorem]{Definition}
\newtheorem{lemma}[theorem]{Lemma}
\newtheorem{prop}[theorem]{Proposition}
\newtheorem{rem}[theorem]{Remark}

 \usepackage{hyperref}
 \hypersetup{
 	colorlinks = true, 
 	urlcolor = blue, 
 	linkcolor = blue, 
 	citecolor = blue 
 }

\usepackage[notcite,notref,color]{showkeys} 
\definecolor{labelkey}{gray}{0.8} \definecolor{refkey}{gray}{0.8}

\let \div \relax

\DeclareMathOperator{\div}{div}

\DeclareMathOperator{\dist}{dist}

\DeclareMathOperator{\N}{\mathbb{N}}

\DeclareMathOperator{\R}{\mathbb{R}}

\providecommand{\Test}{\mathcal{D}}

\newcommand{\dd}{\, \mathrm{d}}
\newcommand{\del}{\partial}
\newcommand{\eps}{\varepsilon}
\newcommand{\weak}{\rightharpoonup}
\newcommand{\B}{\mathcal{B}}

\newcommand{\ms}{\mathbb{S}}
\newcommand{\vc}{\mathbf}
\newcommand{\vu}{\vc u}
\newcommand{\tvu}{\tilde{\vu}}
\newcommand{\tvr}{\tilde{\rho}}
\newcommand{\ovr}{{\overline{\rho}}}
\newcommand{\BMO}{{\rm BMO}}

\renewcommand{\rho}{\varrho}
\newcommand{\lr}[1]{\left( #1 \right)}
\let\temp\phi
\let\phi\varphi
\let\varphi\temp

\newcommand{\rhoe}{\varrho_{\eps}}

\allowdisplaybreaks

\title[Hom. NSE hard sphere pressure]{Homogenization of compressible Navier-Stokes equations under a hard sphere pressure law}
\date{\today}

\author{Nilasis Chaudhuri}
\address{Institute of Applied Mathematics and Mechanics, University of Warsaw, ul. Banacha 2, 02-097 Warszawa, Poland}
\email{nchaudhuri@mimuw.edu.pl}

\author{Florian Oschmann}
\address{Faculty of Mathematics and Physics of the Charles University, Sokolovsk\'a 49/83, 186 00 Praha 8, Czech Republic.}
\email{florian.oschmann@matfyz.cuni.cz}

\begin{document}
	
	\begin{abstract}
		We consider the compressible time-dependent Navier-Stokes equations in a bounded perforated domain in dimensions two and three. Provided the perforations are small enough, we show that the limiting equations do not change their form when the perforation size goes to zero while their number increases to infinity. The novelty of this result is the form of the pressure: we consider a hard-sphere pressure law, giving an \emph{a priori} bound for the density while, compared to the barotropic case, having worse regularity for the pressure, therefore causing significant problems in the homogenization procedure. To the best of our knowledge, the homogenization for this kind of pressures has not been addressed in the literature yet.
	\end{abstract}
	
	\maketitle
	\tableofcontents

	\section{Introduction}
	The rigorous derivation of effective equations from small scale equations resp. equations in perforated domains has gained a lot of interest during the last decades. This so-called homogenization procedures are ways how to derive the famous Darcy and Brinkman laws, or to recover the original equations one started with, depending on the pore size of the porous medium under consideration. A full review of the available literature is out of scope, so we will just cite a few and refer the interested reader to the references therein. Regarding incompressible fluids, Allaire gave in \cite{Allaire1990a, Allaire1990b} a comprehensive study, which was followed up also for different types of fluids and perforations by \cite{GiuntiHoefer2019, Giunti2021, HoeferHuebner2026, HoeferLuOschmann2026, Lu2020, LuQian2024, LuYang2023, WiedemannPeter2024, WiedemannPeter2025}. Incompressible inhomogeneous fluids with non-constant density have been recently considered in \cite{BasaricOschmannPan2026, LuPanYang2025, Pan2025}. Fully compressible fluids were treated in \cite{DieningFeireislLu2017, FeireislLu2015, NecasovaPan2022, OschmannPokorny2023}, among others. For the transition regime from compressible to incompressible fluids via a singular limit involving the low Mach number, we refer to \cite{BasaricChaudhuri2024, BellaOschmann2022, BellaOschmann2023, HoeferKowalczykSchwarzacher2021, HoeferNecasovaOschmann2026} and the references therein.\\

    All of the mentioned works for compressible fluids focus on the \emph{barotropic pressure law}, that is, the pressure is of the form $p(\rho)=\rho^\gamma$ for some $\gamma \geq 1$, where $\rho$ is the fluid's density. This pressure law obviously allows for arbitrarily high densities. Physically, however, fluids consist out of molecules of a specific size that can't be compressed forever; in other words, the fluid's density is bounded from above, and the pressure needs to blow up when the density reaches this upper limit to act against further compression, an idea that was established already by van der Waals in \cite[p. 56]{vanderWaals1873} and almost one hundred years later refined by Soave in \cite{Soave1972}. Such behaviour can be modeled by so-called \emph{hard-sphere pressure laws}. Our goal in this work is therefore to consider homogenization of a certain compressible fluid in the regime of such singular pressures, which, according to the best of the authors' knowledge, has not been investigated yet in the literature.\\
    
\paragraph{\bf Challenges for hard-sphere pressures} The major difference to the barotropic case is that the pressure potential, which pops up in the energy inequality and is defined by
\begin{align*}
    P(\rho) = \rho \int_1^\rho \frac{p(z)}{z^2} \dd z ,
\end{align*}
has another behaviour than $p$ alone. Indeed, if $p(\rho) \approx \rho^\gamma$, then also $P(\rho) \approx \rho^\gamma$ and uniform bounds on $p$ follow from those obtained for $P$. However, if $p(\rho) \approx (1- \rho)^{-\beta}$ for some $\beta >1$, then $P(\rho) \approx (1-\rho)^{-(\beta-1)}$, which is a weaker singularity and therefore no information on the bounds for $p$ can be extracted from this. Furthermore, to pass to the (weak) limit in the nonlinear function $p(\rho)$ is usually established via some higher integrability $p(\rho) \in L^q(D)$ for some $q>1$, whereas in the hard-sphere pressure case, the best one can get is $p(\rho) \in L^1(D)$ such that one needs additional integrability properties independent of the size of the perforations. We aim to find such uniform bounds and integrability properties to still be able to pass to the limit in the pressure term, and by this recover the original Navier-Stokes equations we started with, provided the perforations' size is appropriately small. Lastly, to establish the strong convergence of the density, usually obtained by the so-called effective viscous flux identity, one needs different tools than in the barotropic case, again due to the low integrability $p(\rho) \in L^1(D)$. We overcome this drawback by considering Orlicz spaces (e.g.~$L \log L$) and the $\BMO$ space, which are rather non-standard in the framework of homogenization.

The technical heart of our analysis relies on a refined improved pressure estimate. Previous works on singular pressures of hard-sphere type -- notably the pioneering study by Feireisl and Zhang \cite{FeireislZhang2010} and the subsequent extension by Feireisl, Lu, and M\'alek \cite{FeireislLuMalek2016} -- typically rely on the equi-integrability of the pressure $p(\rho)$ by establishing a uniform bound in the Zygmund space $L \log L((0,T) \times D)$. In contrast, our homogenization result requires a more delicate estimate in the stronger Orlicz space $L(\log L)^{1+\kappa}((0,T) \times D)$ for $\kappa > 0$, as detailed in Appendix~\ref{sec:AppB}. This enhanced regularity is essential to compensate for the additional oscillations introduced by the perforated domain and to ensure the vanishing of the concentration terms during the limit passage, which is the main technical novelty of the present work. \\

\paragraph{\textbf{Notation.}} We use the standard notations for Lebesgue and Sobolev spaces, and denote them even for vector- or matrix-valued functions as in the scalar case, e.g., we use $L^p(D)$ instead of $L^p(D;\R^d)$. The Frobenius inner product of two matrices $A,B\in \R^{d\times d}$ is denoted by $A:B=\sum_{i,j=1}^d A_{ij} B_{ij}$. Moreover, we use the notation $a\lesssim b$ whenever there is a generic constant $C>0$ which is independent of $a$, $b$, and $\eps$ such that $a\leq C b$. We denote for a function $f$ with domain of definition $G \subset\R^d$ its zero extension by $\tilde{f}$, that is,
\begin{align*}
    \tilde{f}=f \text{ in } G,\qquad \tilde{f}=0 \text{ in } \R^d\setminus G,
\end{align*}
with the convention that $0 \cdot \log 0 := 0$ whenever it occurs. The space $L_0^q(G)$ is defined as all functions $f\in L^q(G)$ with $\int_G f \dd x = 0$. Lastly, we define the gradient of a vector by $(\nabla a)_{ij} = \del_i a_j = \del a_j / \del x_i$, and the divergence of a matrix column-wise by $(\div A)_i = \div(A e_i)$, where $e_i$ is the $i$-th canonical basis vector. Note that this is a non-standard convention, however, we believe it to be more compatible than the usual one since then, the sum of the velocity $\vu$ and the pressure gradient $\nabla p$ makes sense.

In addition to the standard Lebesgue spaces, our analysis relies on specific properties of Orlicz (Zygmund) and $\BMO$ spaces. A detailed discussion of these spaces, including the necessary embeddings and duality relations used throughout this work, is deferred to Appendix~\ref{sec:AppB}. \\

\paragraph{\textbf{Organization of the paper.}} In Section~\ref{sec2}, we introduce the model under consideration, weak solutions to this model, and formulate our main result. In Section~\ref{sec:Bds}, we give uniform bounds on the functions needed in the sequel. Section~\ref{sec:Conv} is devoted to show the convergence results using special cut-off functions. Finally, in Section~\ref{sec:strDens}, we show the strong convergence of the density to complete the proof of our main theorem, with special emphasis on the most nontrivial term involving the pressure. The treatment of this term in our analysis relies on the functional framework and results detailed in Appendix~\ref{sec:AppB}.


\section{The model, weak solutions, and the main result}\label{sec2}
To start, we introduce the perforated domain and the evolutionary compressible Navier-Stokes equations, and state our main result. We start with the description of the perforated domain and the equations governing the fluid's motion.
\subsection{The perforated domain and the Navier-Stokes equations}
Let $d=2,3$ and $D\subset \R^d$ be a bounded domain with smooth boundary, and for $\eps\in (0,1)$, let $K_\eps \subset D$ be a compact set. Moreover, let $\alpha \geq 1$, and set $a_\eps = \exp(-\eps^{-\alpha})$ if $d=2$, and $a_\eps = \eps^\alpha$ if $d=3$. We assume that there exists a family of balls $B_{a_\eps}(x_i(\eps))$, $i=1,...,N(\eps)$, such that
\begin{align*}
    \begin{split}
        K_\eps \subset \bigcup_{i=1}^{N(\eps)} B_{a_\eps}(x_i(\eps)),\\
        \dist(x_i(\eps), \del D) > \eps,\\
        \forall i\neq j: |x_i(\eps)-x_j(\eps)| > 2 \eps.
    \end{split}
\end{align*}
Note that this implies
\begin{align*}
    |K_\eps| \lesssim N(\eps) a_\eps^d \lesssim (a_\eps / \eps)^d.
\end{align*}
Finally, we define
\begin{align}
    D_\eps= D\setminus K_\eps. \label{defDeps}
\end{align}

For fixed $T>0$, we consider in $(0,T)\times D_\eps$ the evolutionary compressible Navier-Stokes equations
\begin{align}\label{NSE}
    \begin{cases}
        \del_t \rhoe + \div(\rhoe \vu_\eps)=0 & \text{in } (0,T)\times D_\eps,\\
        \del_t(\rhoe \vu_\eps) + \div(\rhoe \vu_\eps \otimes \vu_\eps) + \nabla p(\rhoe) = \div \ms(\nabla \vu_\eps) + \rhoe \vc f & \text{in } (0,T)\times D_\eps,\\
        \vu_\eps=0 & \text{on } (0,T)\times \del D_\eps,\\
        \rhoe(0,\cdot)=\rho_{\eps, 0},\ (\rhoe\vu_\eps)(0,\cdot)=\vc m_{\eps, 0} & \text{in } D_\eps.
    \end{cases}
\end{align}
Here, $\rhoe$ and $\vu_\eps$ denote the fluid's density and velocity, respectively, $\ms(\nabla \vu) = \mu (\nabla \vu + \nabla^T \vu - \frac2d \div \vu \mathbb I) + \mu_1 \div \vu \mathbb I$ is the stress tensor with shear and bulk viscosity coefficients $\mu>0, \mu_1 \geq 0$, and the force $\vc f\in L^2((0,T)\times D)$ as well as initial data $0 \leq \rho_{\eps, 0} \leq \ovr, \, \vc m_{\eps, 0} \in L^2(D)$ are given, where $\ovr > 0$ is a given constant. We remark that for simplicity, we chose $\vc f$ to be independent of $\eps$; the case where $\vc f_\eps \to \vc f$ strongly in $L^2((0,T) \times D)$ can be handled similarly.\\

The main part is the description of the pressure $p(\rhoe)$. As mentioned in the introduction, we consider a hard sphere pressure law such that there is a constant $\beta > 1 + \frac{d}{2}$ with
\begin{align}\label{gen-pr}
                p\in C^1([0,\ovr)), &&  p(0)=0, && p' >0 \text{ on }(0,\ovr), && \lim_{\varrho\rightarrow \ovr} p(\varrho)(\ovr - \rho)^\beta = c_\ovr > 0 . 
            \end{align}

The prototypical example the reader shall have in mind is a pressure of the form\footnote{Sometimes this kind of pressures is called \emph{singular congestion pressure}, see, e.g., \cite{BreschPerrinZatorska2014, DegondHua2013, DegondHuaNavoret2011}.}
\begin{align}\label{pressure}
    p(\rho) = \frac{\rho^\varsigma}{(\ovr-\rho)^\beta}, && \varsigma \geq 1, && \beta > 1 + \frac{d}{2}, && \ovr > 0,
\end{align}
see Figure~\ref{fig:p}.	Note that this \emph{a priori} guarantees $\rhoe \in L^\infty(0,T; L^\infty (D_\eps))$ with $0 \leq \rhoe < \ovr$.

\begin{figure}[h]\label{fig:p}
    \begin{tikzpicture}
            \draw[->, thick](0,0) -- (3,0) node[right] {$\rho$};
            \draw[->, thick](0,0) -- (0,6) node[left] {$p(\rho)$};
            \draw[domain=0:{2*(6 - sqrt(6))/5}, smooth, variable=\x, black] plot ({\x}, {
            \x^2 / (2 - \x)^2
            });
            \draw[dotted] (1.65,6.0) -- (1.65,0) node[below] {$\ovr$} ;
    \end{tikzpicture}
    \caption{Schematic pressure of the form \eqref{pressure}}
\end{figure}


\subsection{Weak solutions and main result}
For further use, we introduce the concept of finite energy weak solutions.
    \begin{defin}\label{def1}
        Let $T>0$ be fixed and $Q \subset \R^d$ be a bounded Lipschitz domain, and let the initial data satisfy
        \begin{align*}
            \rho(0,\cdot)=\rho_0,\quad (\rho\vu)(0,\cdot)=\vc m_0,
        \end{align*}
        together with the compatibility conditions
        \begin{equation}\label{init}
            \begin{gathered}
                0\leq  \rho_0 \leq \ovr \text{ a.e.~in } Q, \quad \vc m_0=0 \text{ on } \{\rho_0=0\}, \quad \vc m_0 \in L^2(Q), \ \frac{|\vc m_0|^2}{\rho_0} \in L^1(Q), \\ 
                \text{ and } \int_{Q} P(\rho_0) \dd x<\infty ,
            \end{gathered}
        \end{equation}
        where $P(\rho)$ is the pressure potential defined by
            \begin{align}\label{press-pot}
                P(\rho) = \rho \int_{\ovr / 2}^\rho \frac{p(z)}{z^2} \dd z .
            \end{align}
            
        We call a duplet $(\rho,\vu)$ a \emph{renormalized finite energy weak solution} to system \eqref{NSE} if:
        \begin{itemize}[leftmargin=*]
            \item The solution belongs to the regularity class
            \begin{gather*}
                0\leq \rho \leq \ovr \text{ a.e. in } (0,T)\times Q,\; \rho \in C_{w}([0,T];L^{\gamma}(Q)) \text{ for any } \gamma \in [1, \infty), \\ 
                p(\rho) \in L^{1}((0,T)\times Q),\; \vu \in L^2(0,T;W_0^{1,2} (Q; \mathbb{R}^d)) , \\
                \rho \vu \in C_{w}([0,T];L^{2}(Q;\mathbb{R}^d)),\; \rho \vert \vu \vert^2 \in L^{\infty}(0,T;L^{1}(Q)).
            \end{gather*}
            \item We have
            \begin{equation}\label{renCE}
                \begin{split}
                    \del_t \rho + \div(\rho \vu) &= 0 \text{ in } \mathcal{D}'((0,T)\times Q),\\
                    \del_t \tvr + \div(\tvr \tvu) &= 0 \text{ in } \mathcal{D}'((0,T)\times \R^3), \\
                    \del_t b(\tvr)+\div(b(\tvr)\tvu)+(\tvr b'(\tvr)-b(\tvr))\div\tvu &= 0 \text{ in } \mathcal{D}'((0,T)\times \R^3),
                \end{split}
            \end{equation}
            for any $b\in C^1([0,\ovr))$ satisfying
            \begin{align*}
                \vert b(s) \vert^2 + \vert b'(s)\vert^2 \leq C(1 + p(s)) \text{ for some constant } C>0 \text{ and any } s\in [0,\ovr).
            \end{align*}

            \item For any $\phi\in C_c^\infty([0,T)\times Q; \R^3)$,
            \begin{align}\label{wkMom}
                \begin{split}
                    &\int_0^T\int_{Q} \rho \vu \cdot \del_t \phi \dd x \dd t + \int_0^T\int_{Q} \rho \vu\otimes \vu : \nabla \phi \dd x \dd t + \int_0^T\int_{Q} p(\rho) \div \phi \dd x \dd t \\
                    &- \int_0^T\int_{Q} \ms(\nabla \vu):\nabla \phi \dd x \dd t + \int_0^T\int_{Q} \rho \vc f \cdot \phi \dd x \dd t = -\int_{Q} \vc m_0 \cdot \phi(0,\cdot) \dd x .
                \end{split}
            \end{align}
            
            \item For almost any $\tau \in [0,T]$, the energy inequality holds:
            \begin{align}\label{EI}
                \begin{split}
                    &\int_{Q} \left[ \frac12 \rho |\vu|^2 + P(\rho) \right] (\tau, \cdot) \dd x + \int_0^\tau \int_{Q} \ms(\nabla\vu):\nabla\vu \dd x \dd t \\
                    &\quad \leq \int_{Q} \frac{|\vc m_0|^2}{2\rho_0} + P(\rho_0) \dd x + \int_0^\tau \int_{Q} \rho \vc f\cdot\vu \dd x \dd t .
                \end{split}
            \end{align}
        \end{itemize}
    \end{defin}

Regarding existence of weak solutions, we have the following
\begin{theorem}
    Let $D_\eps\subset \R^3$ be a bounded domain with Lipschitz boundary, $T>0$ be given. Let the pressure $p$ satisfy \eqref{gen-pr} and let the initial data satisfy \eqref{init}. Then, there exists a weak solution $(\rho,\vu)$ to system \eqref{NSE} in the sense of Definition~\ref{def1} with $Q=D_\eps$.
\end{theorem}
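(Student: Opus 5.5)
This existence result will be obtained by approximating the singular hard-sphere pressure with a family of barotropic-type pressures and invoking the classical Lions--Feireisl existence theory, in the spirit of Feireisl--Zhang \cite{FeireislZhang2010} and Feireisl--Lu--M\'alek \cite{FeireislLuMalek2016}. For $\delta>0$, define the truncated pressure $p_\delta\in C^1([0,\infty))$ by $p_\delta=p$ on $[0,\ovr-\delta]$, extended beyond $\ovr-\delta$ by a $C^1$ increasing function that grows like $\rho^\gamma$ with large $\gamma$. Set $P_\delta$ to be the corresponding potential, and regularize the initial density (for instance $\rho_{0,\delta}=\min\{\rho_0,\ovr-\delta\}$, with $\vc m_{0,\delta}$ adjusted accordingly) so that $\int_{D_\eps} P_\delta(\rho_{0,\delta})\dd x\le \int_{D_\eps} P(\rho_0)\dd x+1$ uniformly in $\delta$. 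The standard theory on the bounded Lipschitz domain $Q=D_\eps$ then gives renormalized finite energy weak solutions $(\rho_\delta,\vu_\delta)$ satisfying the energy inequality \eqref{EI} with $P_\delta$.

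The uniform bounds come next. The energy inequality, together with Korn's inequality and Gronwall's lemma for the forcing term, yields uniform bounds on $\sqrt{\rho_\delta}\vu_\delta$ in $L^\infty(0,T;L^2)$, on $\vu_\delta$ in $L^2(0,T;W^{1,2}_0)$, and on $P_\delta(\rho_\delta)$ in $L^\infty(0,T;L^1)$. Because $P(\rho)\approx(\ovr-\rho)^{-(\beta-1)}$ blows up, the bound on $P_\delta(\rho_\delta)$ gives $|\{\rho_\delta>\ovr-s\}|\to0$ uniformly as $s\to0$. This alone does not control $p_\delta(\rho_\delta)$, so I will test the momentum equation with the Bogovskii operator $\B[b(\rho_\delta)-\langle b(\rho_\delta)\rangle]$ on $D_\eps$. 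Taking $b$ bounded first gives $p_\delta(\rho_\delta)$ bounded in $L^1((0,T)\times D_\eps)$. Then taking $b(\rho)=\log(1+p(\rho))$-type functions yields a uniform bound in $L\log L$, i.e.\ equi-integrability of the pressure; here the condition $\beta>1+d/2$ enters to control the convective and time-derivative terms.

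Equi-integrability implies $p_\delta(\rho_\delta)\weak\overline{p(\rho)}$ weakly in $L^1$, with no concentrations. Because the $L^1$ bound controls $p_\delta$ near $\ovr$, the limit satisfies $0\le\rho\le\ovr$ a.e. and the set $\{\rho=\ovr\}$ is negligible wherever needed. Passing to the limit in the continuity and momentum equations then follows the usual compactness route (Aubin--Lions type arguments for $\rho_\delta\vu_\delta$ and $\rho_\delta\vu_\delta\otimes\vu_\delta$), giving \eqref{wkMom} with $\overline{p(\rho)}$ in place of $p(\rho)$.

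The main obstacle is strong convergence of the density, needed to identify $\overline{p(\rho)}=p(\rho)$. I would use the effective viscous flux identity with truncations $T_k(\rho_\delta)$, where the products are now $L^1$-type: $p_\delta(\rho_\delta)T_k(\rho_\delta)$. The commutator lemma (div--curl) must be applied with one factor only in $L\log L$ and the other bounded, so I would take bounded test functions and rely on equi-integrability to pass to the limit. Next comes the renormalized continuity equation for the limit; since $\rho$ is bounded, DiPerna--Lions applies directly with $\vu\in L^2W^{1,2}$. Monotonicity of $p$ together with the standard oscillation-defect argument then gives $\overline{\rho\log\rho}=\rho\log\rho$, and hence $\rho_\delta\to\rho$ a.e. Finally, the energy inequality follows by weak lower semicontinuity combined with Fatou's lemma for $P_\delta(\rho_\delta)\to P(\rho)$ a.e., and the renormalized equation for the zero extensions follows from $\vu\in W^{1,2}_0$.
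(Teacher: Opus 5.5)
Your proposal is correct and follows the route the paper relies on. The paper gives no proof of its own and defers to Feireisl--Zhang \cite{FeireislZhang2010} and Feireisl--Lu--M\'alek \cite{FeireislLuMalek2016}. Their strategy matches your sketch: truncate the singular pressure, obtain the $L^1$ and $L\log L$ pressure bounds by testing with $\B[b(\rho)-\langle b(\rho)\rangle]$ for $b(\rho)=\rho$ and $b(\rho)\sim-\log(1-\rho/\ovr)$ (the same computation as Steps~1--2 of Lemma~\ref{imprPress}), then conclude via the effective viscous flux and monotonicity.

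Two small imprecisions. First, $\beta>1+\frac d2$ is needed only for the term $\rho b'(\rho)\div\vu$ coming from the renormalized continuity equation (the term $\mathcal A_{3,1}$), not for the convective term. Second, on a fixed domain the leading pressure term in the effective viscous flux is simply $\int p_\delta(\rho_\delta)\zeta T_k(\rho_\delta)$, because $\div\nabla\Delta^{-1}$ is the identity. So equi-integrability of $p_\delta(\rho_\delta)$ suffices there, and no commutator estimate with an $L\log L$ factor is needed.
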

The existence of weak solutions for pressure laws satisfying \eqref{gen-pr} was first proved by Feireisl and Zhang \cite{FeireislZhang2010} for $\beta \geq 3$ in the case $d=3$, and was later extended by Feireisl, Lu, and M\'alek  \cite{FeireislLuMalek2016} to the range $\beta > \frac{5}{2}$ for $d=3$. While the corresponding results for $d=2$ do not appear to be stated explicitly in the literature, a careful inspection of the critical term estimates (see Section~\ref{sec:Bds}) suggests that, for general dimension $d$, the condition $\beta > 1 + \frac{d}{2}$ should be sufficient.

We are interested in the limiting behaviour of system \eqref{NSE} when $\eps \to 0$ for \emph{tiny} perforations. For such tiny holes, one should expect that these holes do not have a strong influence on the fluid, and the equations do not change their form in the limit. This is already known for barotropic pressures, see for instace \cite{DieningFeireislLu2017, NecasovaPan2022, NecasovaOschmann2023, OschmannPokorny2023, PokornySkrisovsky2021}. We will show that the same conclusion also holds true for hard sphere pressures. Therefore, our main result reads as follows:
\begin{theorem}\label{thm1}
    Let $d=2,3$, $\alpha > d$, $D \subset \R^d$ be a bounded Lipschitz domain, and $D_\eps$ be as in \eqref{defDeps}. Let $\vc f \in L^2((0,T) \times \R^d)$, $\beta > 1 + \frac{d}{2}$, and let $\{(\rhoe, \vu_\eps)\}_{\eps \in (0,1)}$ be a sequence of weak solutions to system \eqref{NSE} in the domain $D_\eps$ emanating from the initial data $(\rho_{\eps, 0}, \vc m_{\eps, 0})$ satisfying
    \begin{align*}
        \tvr_{\eps, 0} \to \rho_0 \text{ in } L^1(D), && \frac{|\tilde{\vc m}_{\eps, 0}|^2}{\tvr_{\eps, 0}} \to \frac{|\vc m_0|^2}{\rho_0} \text{ in } L^1(D).
    \end{align*}
    Then there is a (not relabeled) subsequence such that $\tvr_\eps \weak^\ast \rho$ weakly-($\ast$) in $L^\infty(0,T; L^\infty(D))$ and $\tvu_\eps \weak \vu$ weakly in $L^2(0,T; W_0^{1,2}(D))$, and the limit $(\rho, \vu)$ is a weak solution to system \eqref{NSE} in the sense of Definition~\ref{def1} with domain $Q=D$ emanating from the initial data $(\rho_0, \vc m_0)$.
\end{theorem}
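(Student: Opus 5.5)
The proof will follow the standard homogenization scheme for tiny holes, with extra care in the pressure. We first derive $\eps$-uniform bounds from the energy inequality \eqref{EI} in $D_\eps$ and pass to the zero extensions. The assumptions on the initial data keep the right-hand side bounded, and Gronwall gives
\begin{align*}
\sup_t \int_{D} \tfrac12 \tvr_\eps|\tvu_\eps|^2 + P(\tvr_\eps) \dd x + \|\tvu_\eps\|_{L^2(0,T;W^{1,2}_0(D))}^2 \lesssim 1,
\end{align*}
and $0\le\tvr_\eps<\ovr$. These bounds give the weak limits of $\tvr_\eps$ and $\tvu_\eps$ claimed in Theorem~\ref{thm1}. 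Since $\tvu_\eps$ vanishes on $K_\eps$, the renormalized equations \eqref{renCE} hold on all of $\R^d$. An Aubin--Lions type argument applied to $\tvr_\eps\tvu_\eps$ then yields $\tvr_\eps\tvu_\eps\to\rho\vu$ and $\tvr_\eps\tvu_\eps\otimes\tvu_\eps\weak\rho\vu\otimes\vu$.

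The central step is an improved pressure estimate that does not depend on $\eps$. We will test the momentum equation with $\phi=\psi(t)\,\mathcal B_\eps[b(\tvr_\eps)-\langle b(\tvr_\eps)\rangle]$. Here $\mathcal B_\eps$ is a Bogovskii operator on $D_\eps$ whose norm $L^q\to W^{1,q}_0$ is bounded uniformly in $\eps$. Such an operator exists for $q$ in a range determined by $\alpha>d$: for $d=3$ this requires $a_\eps^{d-q}\eps^{-d}$ to stay bounded, and for $d=2$ the exponential smallness of $a_\eps$ gives it. The choice $b(\rho)=\log(\ovr-\rho)$, or a power $(\ovr-\rho)^{-\theta}$, makes $b$ compatible with \eqref{renCE}, and the condition $\beta>1+\frac d2$ is used to control the convective and time-derivative terms. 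From this we aim to bound $p(\tvr_\eps)$ uniformly in $L(\log L)^{1+\kappa}((0,T)\times D)$, the estimate announced for Appendix~\ref{sec:AppB}. Near $\del D$ a separate local argument will be needed, or else a Bogovskii estimate for the whole domain.

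Next we pass to the limit in the momentum equation. For $\phi\in C_c^\infty([0,T)\times D)$ we use $\phi g_\eps$ as a test function, where $g_\eps$ is a cut-off equal to $0$ on the holes and $1$ away from them, with $\|1-g_\eps\|_{W^{1,q}}\to0$ for suitable $q$; again $\alpha>d$ guarantees $q$ near $d$ or above. All terms converge except the pressure remainder $\int p(\tvr_\eps)\phi\cdot\nabla g_\eps$. Since $p$ is only in an Orlicz space and $\nabla g_\eps$ is supported on a set of measure $\lesssim (a_\eps/\eps)^d\to0$, we will use the Hölder inequality in Orlicz spaces, pairing $L(\log L)^{1+\kappa}$ with $\exp L^{1/(1+\kappa)}$. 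The exp-norm of $\nabla g_\eps$ is then bounded through the BMO or exponential integrability of the concentrated capacitary profile. I expect this to be the main obstacle: in $d=3$ the profile $a_\eps/|x|$ has $\nabla g_\eps\in L^q$ only for $q<3$ uniformly, so the decay must come from the small support together with the extra $\kappa$. Plain $L\log L$, with equi-integrability alone, would not suffice. Equi-integrability of $p(\tvr_\eps)$ also gives $p(\tvr_\eps)\weak\overline{p(\rho)}$ in $L^1$.

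Finally we show strong convergence of the density, so that $\overline{p(\rho)}=p(\rho)$. Following Feireisl--Zhang, we derive the effective viscous flux identity
\begin{align*}
\overline{p(\rho)T_k(\rho)}-\overline{p(\rho)}\,\overline{T_k(\rho)}=(2\mu(1-\tfrac1d)+\mu_1)\bigl(\overline{T_k(\rho)\div\vu}-\overline{T_k(\rho)}\div\vu\bigr)
\end{align*}
by testing with $\nabla\Delta^{-1}[T_k(\tvr_\eps)]$ multiplied by the cut-off $g_\eps$. The commutator lemma is used for the convective term, and the cut-off errors are handled as in the previous step, with Orlicz/BMO duality in place of $L^q$ duality. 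Because $\rho$ is bounded, the truncations can be removed. Comparing the renormalized equations for $\rho\log\rho$ before and after the limit, and using the monotonicity of $p$, gives $\overline{\rho\log\rho}=\rho\log\rho$, and hence $\tvr_\eps\to\rho$ a.e. This identifies the pressure and allows passage to the limit in \eqref{renCE} and \eqref{EI}, where the energy inequality follows by weak lower semicontinuity. Therefore $(\rho,\vu)$ is a weak solution in $D$ in the sense of Definition~\ref{def1}.
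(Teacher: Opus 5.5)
\textbf{The gap: the scalar cut-off in the momentum limit.} The failure is at the step you flag as the main obstacle, and the remedy you propose cannot work. With a scalar cut-off, $\div(g_\eps\phi)=g_\eps\div\phi+\phi\cdot\nabla g_\eps$. So you must show $\int_0^T\int_D p(\tvr_\eps)\,\phi\cdot\nabla g_\eps\dd x\dd t\to0$, knowing only that $p(\tvr_\eps)$ is bounded in $L(\log L)^{1+\kappa}$. The Orlicz--H\"older pairing needs $\nabla g_\eps$ bounded, indeed small, in $\exp L^{1/(1+\kappa)}(D)$. No admissible cut-off has this property. On bounded domains $\exp L^{1/(1+\kappa)}(D)\hookrightarrow L^q(D)$ for every $q<\infty$. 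A uniform $L^q$ bound on $\nabla g_\eps$ with $q>d$ would, by Morrey's inequality and $g_\eps=0$ on the holes, force $g_\eps\le\frac12$ on a ball of fixed radius around each hole. That contradicts $g_\eps\to1$ in $L^1(D)$.

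Quantitatively, for the capacitary profile in $d=3$ one has $|\nabla g_\eps|\gtrsim a_\eps^{-1}$ on a set $E_\eps$ with $a_\eps^3\lesssim|E_\eps|\lesssim\eps^{3(\alpha-1)}$. Hence
\begin{align*}
\|\nabla g_\eps\|_{\exp L^{1/(1+\kappa)}(D)}\gtrsim a_\eps^{-1}\bigl[\log\bigl(1+|E_\eps|^{-1}\bigr)\bigr]^{-(1+\kappa)}\gtrsim\eps^{-\alpha}|\log\eps|^{-(1+\kappa)}\to\infty .
\end{align*}
The small support gains only a logarithm against a polynomially large height, and the $\BMO$ norm of $\nabla g_\eps$ is likewise of order $a_\eps^{-1}$. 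This is not just a weakness of the inequality. The bound on $p(\tvr_\eps)$ in $L(\log L)^{1+\kappa}$ does not exclude concentration of height $\lambda$, with $\lambda|E_\eps|(\log\lambda)^{1+\kappa}\sim1$, on the part of $E_\eps$ where $\phi\cdot\nabla g_\eps\gtrsim a_\eps^{-1}$. The remainder is then of order $a_\eps^{-1}|\log\eps|^{-(1+\kappa)}$.

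Two related points. First, $\|1-g_\eps\|_{W^{1,q}}\to0$ holds only for $q<d$, namely $(3-q)\alpha>3$ if $d=3$ and $q\le2$ if $d=2$, not for $q$ near $d$ or above. Second, testing the effective viscous flux with $g_\eps\nabla\Delta^{-1}[T_k(\tvr_\eps)]$ reproduces the same uncontrolled term $\int p(\tvr_\eps)\,\nabla g_\eps\cdot\nabla\Delta^{-1}[T_k(\tvr_\eps)]$.

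\textbf{The missing idea: divergence-free matrix cut-offs.} What you need are the matrix-valued cut-offs $\Phi_\eps$ of Lemmas~\ref{lemPhi} and~\ref{lemPhi2}, which satisfy $\div\Phi_\eps=0$. Testing with $\Phi_\eps\phi$ gives $\div(\Phi_\eps\phi)=\Phi_\eps:\nabla\phi$, so no derivative of the cut-off ever meets the pressure. The remainder $\int p(\tvr_\eps)(\Phi_\eps-\mathbb I):\nabla\phi$ then vanishes by Lemma~\ref{lem:equiConv}. This uses only equi-integrability of $p(\tvr_\eps)$, $\|\Phi_\eps\|_{L^\infty}\lesssim1$ and $|{\rm supp}(\Phi_\eps-\mathbb I)|\to0$; the bound from the logarithmic renormalization already suffices here.

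The price is paid in the effective viscous flux. With $\psi\zeta\Phi_\eps\nabla\Delta^{-1}[\mathbf 1_D\tvr_\eps]$, the pressure is paired with the full matrix $\Phi_\eps:\nabla^2\Delta^{-1}[\mathbf 1_D\tvr_\eps]$ rather than with its trace $\tvr_\eps$. Since $\nabla^2\Delta^{-1}$ maps $L^\infty$ only into $\BMO$, the error $\int p(\tvr_\eps)\zeta(\Phi_\eps-\mathbb I):\nabla^2\Delta^{-1}[\mathbf 1_D\tvr_\eps]$ requires equi-integrability of $p(\tvr_\eps)\nabla^2\Delta^{-1}[\mathbf 1_D\tvr_\eps]$. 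The paper obtains this from the Fefferman--Stein inequality and the Stein-type $L\log L$ bound for the maximal function (Proposition~\ref{prop:Maxfcts}). That is where $\kappa>0$ is genuinely needed, not in a pairing with $\nabla g_\eps$.

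Your Bogovskii-based pressure estimate is fine, provided it starts with $b(\rho)=\rho$. That step gives $p(\rhoe)\in L^1$ and hence $(\ovr-\rhoe)^{-1}\in L^\beta$ uniformly, which the logarithmic renormalizations need in order to close. Your closing $\rho\log\rho$ argument is also fine. What has to change is the cut-off in both limit passages.
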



\section{Uniform bounds}\label{sec:Bds}
In this section, we show uniform in $\eps$ bounds on the velocity, density, and pressure.

\subsection{Bounds obtained from the energy inequality}
\begin{lemma}
    Under the assumptions of Theorem~\ref{thm1}, we have
    \begin{align*}
        \begin{aligned}
            \|P(\rhoe)\|_{L^\infty(0,T; L^1(D_\eps))} + \|\sqrt{\rhoe}\vu_\eps\|_{L^\infty(0,T;L^2(D_\eps))} + \| \vu_\eps \|_{L^2(0,T; W_0^{1,2}( D_\eps))} \leq C,
        \end{aligned}
    \end{align*}
    for some constant $C>0$ which is independent of $\eps$. In particular, by virtue of the singular character of $p(\rhoe)$ in \eqref{gen-pr} and, in turn, of $P(\rhoe)$ as defined in \eqref{press-pot}, we also have
    \begin{align*}
        \|\rhoe\|_{L^\infty(0,T; L^\infty(D_\eps))} \leq C.
    \end{align*}
\end{lemma}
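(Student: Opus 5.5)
The plan is to start from the energy inequality \eqref{EI} with $Q=D_\eps$ and bound its right-hand side uniformly in $\eps$.

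\emph{Initial energy.} The assumed convergences $\tvr_{\eps,0}\to\rho_0$ and $|\tilde{\vc m}_{\eps,0}|^2/\tvr_{\eps,0}\to|\vc m_0|^2/\rho_0$ in $L^1(D)$ give $\int_{D_\eps}|\vc m_{\eps,0}|^2/(2\rho_{\eps,0})\dd x\le C$. For the term $\int_{D_\eps}P(\rho_{\eps,0})\dd x$ I will implicitly require, as part of admissible data \eqref{init}, that it be bounded uniformly in $\eps$. For instance, one may assume $\rho_{\eps,0}\le\ovr-\delta$ for a fixed $\delta>0$, in which case $P$ is bounded on the range of $\rho_{\eps,0}$. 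I also note that $P$ is bounded from below on $[0,\ovr)$: it is continuous on $[0,\ovr)$ with $P(0)=0$, since $p(z)/z^2$ is integrable at $0$ when multiplied by $\rho$ (use $p(z)\lesssim z$ near $0$ because $p\in C^1$ and $p(0)=0$). Moreover $P\to+\infty$ as $\rho\to\ovr$. Hence $\int_{D_\eps}P(\rhoe)\dd x \ge -C|D|$, so the potential term on the left is harmless even where $P<0$.

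\emph{Forcing and Gronwall.} Korn's inequality for $W_0^{1,2}$ fields, applied after zero extension to $D$, gives $\int_{D_\eps}\ms(\nabla\vu_\eps):\nabla\vu_\eps\dd x\ge c\|\nabla\vu_\eps\|_{L^2}^2$ with $c$ independent of $\eps$; it depends only on $\mu$ and the trace-free/div decomposition. Poincaré on $D$ then controls the full $W^{1,2}$ norm with a constant independent of the holes. For the force term I will use $\rhoe\le\ovr$ pointwise, which comes directly from the regularity class of Definition~\ref{def1}. This gives
\[
\int_0^\tau\!\!\int_{D_\eps}\rhoe\vc f\cdot\vu_\eps\dd x\dd t\le \ovr^{1/2}\|\vc f\|_{L^2}\Big(\int_0^\tau\!\!\int_{D_\eps}\rhoe|\vu_\eps|^2\dd x\dd t\Big)^{1/2}\le C+\int_0^\tau E_\eps(t)\dd t ,
\]
where $E_\eps(t)=\int_{D_\eps}\frac12\rhoe|\vu_\eps|^2+P(\rhoe)+C_0\dd x\ge0$ and $C_0$ is chosen so that $P+C_0\ge0$. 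Gronwall's lemma then yields $\sup_\tau E_\eps(\tau)+\|\nabla\vu_\eps\|_{L^2L^2}^2\le C$. This is the bound on $\sqrt{\rhoe}\vu_\eps$ in $L^\infty L^2$ and on $\vu_\eps$ in $L^2W_0^{1,2}$. Since $|P|\le P+2C_0$, it also gives $\|P(\rhoe)\|_{L^\infty L^1}\le C$.

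\emph{$L^\infty$ bound on the density.} The bound $\|\rhoe\|_{L^\infty L^\infty}\le\ovr$ is immediate from the solution class $0\le\rhoe\le\ovr$. The energy bound refines this: by \eqref{gen-pr}, $P(\rho)\approx(\ovr-\rho)^{1-\beta}$ near $\ovr$, with $\beta>1$. Finiteness of $\int P(\rhoe)$ therefore forces $\{\rhoe=\ovr\}$ to be a null set, so $\rhoe<\ovr$ a.e.

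The only delicate point is the uniform bound on the initial potential energy, which is not explicitly stated among the hypotheses. I would handle it through the admissibility assumption on the data described above.
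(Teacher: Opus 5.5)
Your proof is correct and follows essentially the same route as the paper: the energy inequality, Cauchy--Schwarz/Young on the forcing term, Gronwall, then Korn and Poincar\'e after zero extension to $D$. Your handling of the lower bound of $P$, and of $\vc f\in L^2$ via $\rhoe\le\ovr$, tidies up steps the paper passes over quickly. The uniform bound on $\int_{D_\eps}P(\rho_{\eps,0})\dd x$ that you flag is indeed not implied by the $L^1$ convergences in Theorem~\ref{thm1}; the paper uses it tacitly when it bounds the initial energy by a constant $C$, so stating it as an explicit hypothesis is the right fix.
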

\begin{proof}
    By the energy inequality \eqref{EI} and the assumptions on the initial data \eqref{init}, we obtain
    \begin{align*}
        &  \int_{D_\eps} \left[ \frac{1}{2} \rhoe |\vu_\eps|^2 + P(\rhoe) \right](\tau, \cdot) \dd x + \int_0^\tau \int_{D_\eps} \ms(\nabla\vu_\eps):\nabla\vu_\eps \dd x \dd t\\
        &\leq C + \int_0^\tau\int_{D_\eps} \rhoe \vc f \cdot \vu_\eps \dd x \dd t.
    \end{align*}
    Using now H\"older's and Young's inequalities, we get for almost any $\tau\in [0,T]$ that
    \begin{align*}
        &\int_{D_\eps} \rhoe\vc f \cdot \vu_\eps \dd x \leq C \|\sqrt{\rhoe}\|_{L^2(D_\eps)} \|\sqrt{\rhoe} \vu_\eps\|_{L^2(D_\eps)} \leq C \|\rhoe\|_{L^1(D_\eps)} + \frac12 \|\rhoe |\vu_\eps|^2\|_{L^1(D_\eps)}\\
        &\leq C + \frac12 \|\rhoe |\vu_\eps|^2\|_{L^1(D_\eps)},
    \end{align*}
    where we used mass conservation $\|\rhoe(\tau, \cdot)\|_{L^1(D_\eps)} = \|\rho_{\eps, 0}\|_{L^1(D_\eps)} \leq C$. Thus, we end up with the inequality
    \begin{align*}
         \int_{D_\eps} \left[ \frac{1}{2} \rhoe |\vu_\eps|^2 + P(\rhoe) \right] (\tau, \cdot) \dd x + \int_0^\tau \int_{D_\eps} \ms(\nabla\vu_\eps):\nabla\vu_\eps \dd x \dd t \leq \int_0^\tau \int_{D_\eps} \frac{1}{2} \rhoe |\vu_\eps|^2 \dd x \dd t + C  ,
    \end{align*}
    yielding by virtue of Gr\"onwall's inequality
    \begin{align*}
        \sup_{t \in (0,T)} \int_{D_\eps} \Big[ \rhoe |\vu_\eps|^2 + P(\rhoe) \Big] (t, \cdot) \dd x + \int_0^T \int_{D_\eps} \ms(\nabla\vu_\eps):\nabla\vu_\eps \dd x \dd t \leq C  .
    \end{align*}
    Since we may extend both $\rhoe$ and $\vu_\eps$ by zero to $D$ without influencing the inequality, by virtue of the standard Korn and Poincar\'e inequalities in $D$ we conclude easily.
\end{proof}


\subsection{Improved pressure estimates}
The bounds for the pressure potential $P$ are not enough to conclude that $p(\rhoe)$ has a weak limit in $L^1((0,T) \times D)$, which is due to the less singular behaviour of $P$ versus $p$. To overcome this, we test the momentum equation against a function that ``recovers'' the pressure $p$. To this end, we introduce an inverse of the divergence that was constructed in \cite{DieningFeireislLu2017, HoeferKowalczykSchwarzacher2021} for $d=3$, and in \cite{NecasovaOschmann2023} for $d=2$ (see also \cite{LuSchwarzacher2018}):
  \begin{lemma}\label{lem:Bog}
Let $d=2,3$, $\alpha > d$, and $a_\eps = \exp(-\eps^{-\alpha})$ if $d=2$, $a_\eps = \eps^\alpha$ if $d=3$. For any $q \geq 1$, there is a bounded linear operator $\B_\eps : L_0^q(D_\eps) \to W_0^{1,q}(D_\eps)$ such that
\begin{align*}
\div \B_\eps(f)=f \text{ for any } f \in L_0^q(D_\eps), && \|\B_\eps \|_{L_0^q \to W_0^{1,q}}^q \lesssim 1+C(\eps, q),
\end{align*}
where:
\begin{itemize}
\item if $d=2$, then
\begin{align*}
    C(\eps, q) = \eps^{-\alpha} a_\eps^{2-q} \begin{cases}
        |\log (\eps^\frac{\alpha}{2} a_\eps^{-1})|^{-q} |(\eps^\frac{\alpha}{2} a_\eps^{-1})^{2-q} - 1| & \text{if } q \neq 2,\\
        |\log (\eps^\frac{\alpha}{2} a_\eps^{-1})|^{-1} & \text{if } q=2.
    \end{cases}
\end{align*}
In particular, $\B_\eps$ is a uniformly bounded operator from $L_0^q(D_\eps)$ to $W_0^{1,q}(D_\eps)$ for any $1 \leq q \leq 2$.
\item if $d=3$, then
\begin{align*}
    C(\eps, q) = \eps^{(3-q)\alpha - 3}.
\end{align*}
In particular, $\B_\eps$ is a uniformly bounded operator from $L_0^q(D_\eps)$ to $W_0^{1,q}(D_\eps)$ for any $q \geq 1$ with $(3-q)\alpha - 3 \geq 0$. Since $\alpha > 3$ if $d=3$, this holds at least for any $1 \leq q \leq 2$.
\end{itemize}

Moreover, this operator can be extended to $\B_\eps : \dot{W} := \{g \in [W^{1,2}(D_\eps)]' : \langle g, 1 \rangle = 0 \} \to L^2(D_\eps)$ such that for any $\vc f \in L^2(D_\eps)$ with $\langle \div \vc f , 1 \rangle = 0$,
\begin{align*}
    \B_\eps (\div \vc f) = \vc f, && \|\B_\eps\|_{\dot{W} \to L^2} \lesssim 1.
\end{align*}
\end{lemma}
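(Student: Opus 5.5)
The plan is the standard restriction operator construction of Allaire, in the form used by Diening--Feireisl--Lu and Lu--Schwarzacher. In two steps, I will compose the classical Bogovskii operator $\B_D$ on the fixed domain $D$ with a restriction operator $R_\eps : W_0^{1,q}(D) \to W_0^{1,q}(D_\eps)$. For $f \in L_0^q(D_\eps)$ I extend $f$ by zero to $\tilde f \in L_0^q(D)$ and set $\vc v = \B_D(\tilde f) \in W_0^{1,q}(D)$, with $\|\vc v\|_{W^{1,q}} \lesssim \|f\|_{L^q}$. I then define $\B_\eps(f) := R_\eps(\vc v)$. The restriction must satisfy three properties: it preserves divergence on $D_\eps$, so that $\div R_\eps \vc v = \div \vc v = f$ whenever $\div \vc v = 0$ on $K_\eps$ (which holds since $\tilde f = 0$ there); it equals $\vc v$ away from the holes; and its operator norm is bounded by $(1+C(\eps,q))^{1/q}$.

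To construct $R_\eps$, I work cell by cell. Around each $x_i$ I take the ball $B_\eps(x_i)$; these are disjoint and contained in $D$ by the separation assumptions. I split it into the inner ball $B_{a_\eps}$, an annulus $B_{\eps/2}\setminus B_{a_\eps}$ (or, more precisely, the scale $\eps^{\alpha/2}$ appearing in $C(\eps,q)$ for $d=2$), and the outer annulus. On each cell:
\begin{itemize}
\item I subtract a cut-off $\phi_i$ times the local mean of $\vc v$ over the annulus $A_i$.
\item I correct the divergence error by a local Bogovskii operator on the annulus. That operator has a scale-invariant constant, and its zero-mean compatibility is guaranteed by the divergence theorem, since $\vc v$ is divergence-free near the hole.
\item I use the capacitary cut-off $\phi_i$, equal to $1$ near $B_{a_\eps}$ and $0$ outside $B_{\eps}$, to kill the boundary values on $\del B_{a_\eps}$.
\end{itemize}
The cut-off is the $q$-capacity potential: $\phi = (|x|^{2-q}\ \text{type profile})$ in $d=3$, and the logarithmic profile $\log(|x|/r)/\log(a_\eps/r)$ in $d=2$.

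The main computation, and the main obstacle, is the estimate of $\|\nabla(\phi_i\, \bar{\vc v}_i)\|_{L^q}$ together with the associated Bogovskii correction. Using $|\bar{\vc v}_i| \lesssim \eps^{-d/q}\|\vc v\|_{L^q(A_i)}$, or the Sobolev and Poincaré bound on the annulus, and $\|\nabla\phi_i\|_{L^q}^q \approx \mathrm{cap}_q(B_{a_\eps}, B_\eps)$, I get the following. In $d=3$ the capacity is $\approx a_\eps^{3-q}$, which after summing over the $N(\eps)\lesssim \eps^{-3}$ cells gives $\eps^{(3-q)\alpha-3}$. In $d=2$ the logarithmic capacity produces exactly the stated $|\log(\eps^{\alpha/2}a_\eps^{-1})|^{-q}$ factors; the cases $q\ne2$ and $q=2$ arise from integrating $r^{1-q}$ versus $r^{-1}$. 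The delicate part here is the two-dimensional bookkeeping with the intermediate radius, and I would follow \cite{NecasovaOschmann2023} for the choice of scales. The uniform bounds for $q\le2$ then follow from $\alpha>d$. For $q=1$ the Bogovskii operator is not bounded, so the base estimate must be taken from the cited constructions, or $q=1$ must be interpreted via those works.

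For the extension to $\dot W$, I argue by duality, or directly via the same construction. The classical Bogovskii operator extends to $\B_D(\div \vc g)$ with $\|\B_D\div\vc g\|_{L^2}\lesssim\|\vc g\|_{L^2}$ (Geißert--Heck--Hieber). The restriction $R_\eps$ is also bounded $L^2\to L^2$ uniformly on divergence-free-near-holes fields, since the mean subtraction and the annulus correction only involve $L^2$ norms at the cell scale. For $\vc f\in L^2(D_\eps)$ with $\langle\div\vc f,1\rangle=0$, the element $\div\vc f\in\dot W$ satisfies $\B_\eps(\div\vc f)-\vc f \in L^2$ with zero divergence and zero normal trace. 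Hence I redefine the extension as $\B_\eps(\div \vc f)=\vc f$ on this subspace, and by density and linearity this is consistent with the construction and bounded with norm $\lesssim 1$.
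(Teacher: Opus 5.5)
Your overall route, Bogovskii on $D$ composed with an Allaire-type restriction operator and capacity bookkeeping at radius $\eps^{\alpha/2}$ in $d=2$, is the construction of the works the paper cites instead of giving a proof. However, the local step is not correct as stated. The field $\vc v=\B_D(\tilde f)$ is divergence-free only \emph{on} $K_\eps$, not near it. A cell modification that annihilates $\vc v$ on all of $B_{a_\eps}(x_i)$, as you intend, has two problems. First, it must absorb the flux $\int_{\partial B_{a_\eps}(x_i)}\vc v\cdot\vc n=\int_{B_{a_\eps}(x_i)\setminus K_\eps}f$, which need not vanish, so your annulus Bogovskii problem is incompatible. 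Second, it forces $\div R_\eps\vc v=0\neq f$ on $B_{a_\eps}(x_i)\setminus K_\eps$. This is only consistent when the holes are the balls themselves. For a general compact $K_\eps$ you must pose the local problem on $B(x_i)\setminus K_\eps$, where compatibility follows from $\int_{K_\eps}\div\vc v=0$. You then need a Bogovskii constant there that is uniform in $\eps$ and $i$. This holds for rescaled copies of one smooth set, as assumed in \cite{DieningFeireislLu2017}, but not for arbitrary compact sets: a bottle-shaped hole with a thin neck makes it blow up. Even for ball holes, scale invariance does not control the constant on $B_r\setminus B_{a_\eps}$ when $r/a_\eps\to\infty$, as in $d=2$; you need uniform John-domain bounds or a dyadic decomposition. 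Also, the delicate estimate is not the mean term. The fluctuation $\vc v-\bar{\vc v}_i$ must be cut off too, and bounding $\|\nabla\phi_i\,(\vc v-\bar{\vc v}_i)\|_{L^q}$ across all scales between $a_\eps$ and the outer radius needs a Hardy-type inequality. For $q=d=2$ this is a logarithmic Hardy inequality, compensated exactly by the factor $|\log(\eps^{\alpha/2}a_\eps^{-1})|^{-q}$. Finally, $q=1$ cannot be ``interpreted via the cited works''. They, like Bogovskii's operator, cover only $1<q<\infty$, and no bounded linear right inverse of the divergence $L^1_0\to W^{1,1}_0$ exists even without holes. The statement must be read for $q>1$. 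The terms $\mathcal A_{3,2}$, the only places the paper uses $q=1$, work equally well with some $q\in(1,2)$.

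The $\dot W$ part contains an outright error: no map on $\dot W$ can satisfy $\B_\eps(\div\vc f)=\vc f$. Take $\vc f=\nabla^\perp\psi$ ($d=2$) or $\vc f=\mathrm{curl}\,\vc h$ ($d=3$) with $\psi$, $\vc h$ smooth and compactly supported in $D_\eps$. Then $\langle\div\vc f,\phi\rangle=-\int_{D_\eps}\vc f\cdot\nabla\phi\dd x=0$ for every $\phi\in W^{1,2}(D_\eps)$, so $\div\vc f=0$ in $\dot W$ while $\vc f\neq 0$. Your ``redefinition'' $\div\vc f\mapsto\vc f$ is therefore not well defined. It would also contradict the operator already built on $L^q_0$, since, as you observe, $\B_\eps(\div\vc f)-\vc f$ is only solenoidal and in general nonzero. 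The meaningful content, and what the paper actually uses for $\mathcal A_2$, is the bound $\|\B_\eps(\div\vc f)\|_{L^2}\lesssim\|\vc f\|_{L^2}$ for $\vc f\cdot\vc n=0$ on $\partial D_\eps$, i.e.\ uniform boundedness of the continuous extension $\dot W\to L^2$. Proving it needs the negative-norm Bogovskii estimate on $D$ for $\vc v$, plus a uniform $L^2\to L^2$ bound for $R_\eps$. The latter is not a matter of ``cell-scale $L^2$ norms''. The corrector is $\B_{\mathrm{loc}}[\div(\phi_i\vc v)]$, and bounding it by $\|\vc v\|_{L^2}$ requires the negative-norm estimate on each local domain. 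There, the flux of $\vc v$ through $\partial K_\eps$ is controlled via $\div\vc v=0$ in $K_\eps$ and a uniform extension of $W^{1,2}$ functions across the hole, which is again where the hole geometry enters.
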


Having the operator $\B_\eps$ at hand, we will show:
\begin{lemma}\label{imprPress}
    For $\beta > 1 + \frac{d}{2}$, it holds that
    \begin{align}\label{pre-equi}
        \rhoe p(\rhoe) \in L^1((0,T) \times D_\eps), && p(\rhoe) \log(1 - \rhoe/\ovr) \in L^1((0,T) \times D_\eps),
    \end{align}
    where both functions are uniformly bounded with respect to $\eps$. In particular, we have that $p(\rhoe)$ is uniformly bounded in $L^1((0,T) \times D_\eps)$, and the sequence $\{p(\rhoe)\}_{\eps \in (0,1)}$ is equi-integrable.
    
    Moreover, for any $\kappa \in (0, \infty)$, we have
    \begin{align*}
        p(\rhoe) \lr{\log(1 - \rhoe/\ovr)}^{1+\kappa} \in L^1((0,T) \times D_\eps)
    \end{align*} 
    uniformly with respect to $\eps$. As a consequence, the pressure sequence is uniformly bounded in the space-time Orlicz space
    \begin{align*}
         p(\rhoe) \in L(\log L)^{1+\kappa} ((0,T)\times D_\eps),
    \end{align*}
    and thus, the sequence $\{p(\tvr_\eps) \log p(\tvr_\eps)\}_{\eps \in (0,1)}$ is equi-integrable as well.
\end{lemma}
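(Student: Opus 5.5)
The plan is to test the momentum equation \eqref{wkMom} in $D_\eps$ with the Bogovskii-type field from Lemma~\ref{lem:Bog},
\begin{align*}
    \phi = \psi(t)\,\B_\eps\Big( b(\rhoe) - \frac{1}{|D_\eps|}\int_{D_\eps} b(\rhoe) \dd x \Big),
\end{align*}
where $\psi \in C_c^\infty(0,T)$. The choice of $b$ is tailored to each claim. For the first claim take $b(\rho) = \rho$. For the logarithmic claims take $b(\rho) = -\log(1-\rho/\ovr)$, and then $b(\rho) = |\log(1-\rho/\ovr)|^{1+\kappa}$, in each case truncated at level $k$ so that it is admissible in the renormalized equation \eqref{renCE}. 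This is legitimate because $|b|^2 + |b'|^2 \lesssim 1 + (\ovr-\rho)^{-2} \lesssim 1+p(\rho)$, using $\beta > 2$.

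With this test function the pressure term yields
\begin{align*}
    \int\psi\int p(\rhoe) b(\rhoe) \dd x\dd t
\end{align*}
minus the product of the means. The latter is controlled by $\|p(\rhoe)\|_{L^1}\,\|b\|_{L^1}$, and can be absorbed after first establishing the $L^1$ bound on $p$.

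The remaining terms are estimated uniformly in $\eps$ using only $\B_\eps$ on $L^q_0$ with $q \le 2$, where Lemma~\ref{lem:Bog} gives uniform bounds. The viscous term is handled by $\|\nabla\vu_\eps\|_{L^2}\|b(\rhoe)\|_{L^2}$. The force term and the term $\rhoe\vu_\eps\otimes\vu_\eps:\nabla\phi$ need $\rhoe|\vu_\eps|^2 \in L^1(0,T;L^{d/(d-2)})$, or any $L^r$ for $d=2$, which follows from $\rhoe \le \ovr$ and Sobolev embedding. They therefore require $\nabla\B_\eps(b) \in L^{q}$ with $q = d/2$ (any $q>1$ for $d=2$), which is admissible.

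The time-derivative term needs more care. We move $\del_t$ onto $b(\rhoe)$ and use \eqref{renCE}:
\begin{align*}
    \del_t b = -\div(b\vu_\eps) - (\rhoe b'-b)\div\vu_\eps .
\end{align*}
The divergence part is treated by the extension of $\B_\eps$ to $\dot W$, which gives $\|\B_\eps\div(b\vu_\eps)\|_{L^2}\lesssim \|b\vu_\eps\|_{L^2}$, bounded by $\|b\|_{L^{q}}\|\vu_\eps\|_{L^{2^*}}$. The other part is bounded by $\B_\eps$ on $L^{q}$ with $q<2$, provided $(\rhoe b'-b)\div\vu_\eps \in L^1(0,T;L^q)$.

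This is where $\beta>1+\frac d2$ enters. For the logarithmic $b$ we have $b' \sim (\ovr-\rho)^{-1}$, and the energy bound only gives $(\ovr-\rhoe)^{-(\beta-1)} \in L^\infty L^1$. Hence $(\ovr-\rhoe)^{-1}\in L^\infty L^{\beta-1}$, and Hölder with $\div\vu_\eps\in L^2$ lands in $L^q$ with $1/q = 1/2 + 1/(\beta-1)$. We also need the pairing against $\rhoe\vu_\eps \in L^2 L^{2^*}$ to close, and checking the exponents is exactly where $\beta-1>d/2$ should appear. Powers of logarithms are harmless, since they lie in every $L^r$ under the energy bound. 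Letting the truncation $k\to\infty$ via Fatou then gives the three integrability statements.

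As a consequence, $p(\rhoe)$ is bounded in $L^1$: on $\{\rhoe<\ovr/2\}$ we have $p$ bounded, while on the complement $p\le (2/\ovr)\rhoe p$. Equi-integrability follows from de la Vallée-Poussin. Since $\log p(\rho)\approx \beta|\log(1-\rho/\ovr)|$ near $\ovr$, the estimate with exponent $1+\kappa$ gives
\begin{align*}
    \int p(\log p)^{1+\kappa} \dd x \dd t \le C,
\end{align*}
that is, a uniform bound in $L(\log L)^{1+\kappa}$. Applying de la Vallée-Poussin again with $\Phi(s)=s(\log s)^{1+\kappa}$, which is superlinear relative to $s\log s$, yields equi-integrability of $p\log p$.

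The main obstacle is the $\del_t$ term, specifically the commutator $(\rhoe b'-b)\div\vu_\eps$. It has to be bounded in an $L^q$ with $q\le 2$ so that $\B_\eps$ stays uniformly bounded on the perforated domain, using only the weak singular bound from $P$. I would first try the Hölder exponent count described above. If it fails for the $1+\kappa$ power, the fallback is to bootstrap: use the just-proved bound on $p\,|\log|$ to improve the integrability of $(\ovr-\rhoe)^{-1}$ before repeating the argument.
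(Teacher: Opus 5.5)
There is a genuine gap in the step you yourself flag as the main obstacle. You control $b'(\rhoe)\sim(\ovr-\rhoe)^{-1}$ only through the energy, $(\ovr-\rhoe)^{-1}\in L^\infty(0,T;L^{\beta-1}(D_\eps))$. Hence the source $\rhoe b'(\rhoe)\div\vu_\eps$ lies in $L^2(0,T;L^q(D_\eps))$ with $\frac1q=\frac12+\frac1{\beta-1}$, and this gives $q\geq 1$ only when $\beta\geq 3$.

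On the nonempty range $1+\frac d2<\beta<3$ (for both $d=2$ and $d=3$), the source is not even known to be integrable in space, so $\B_\eps$ cannot be applied to it. The exponent count you allude to does formally return $\beta\geq 1+\frac d2$. In 3D, pairing $\rhoe\vu_\eps\in L^2L^6$ with $W^{1,q}\hookrightarrow L^{3q/(3-q)}$ requires $\frac1q\leq\frac76$, i.e.\ $\beta\geq\frac52$. But this uses $W^{1,q}$ with $q<1$ (e.g.\ $q=\frac67$ at $\beta=\frac52$), which is meaningless. Truncating $b$ at level $k$ does not help, because all bounds must be uniform in $k$. As written, your argument only reaches the old Feireisl--Zhang threshold $\beta\geq 3$.

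The missing idea is to feed the first step into the logarithmic one, as the paper does. The test with $b(\rho)=\rho$ gives $p(\rhoe)$ bounded in $L^1((0,T)\times D_\eps)$. Combined with $p(\rho)\gtrsim(\ovr-\rho)^{-\beta}-1$, this makes $(\ovr-\rhoe)^{-1}$ bounded in $L^\beta((0,T)\times D_\eps)$: you trade the $L^\infty$ time integrability for one extra power in space. Consequences:
\begin{itemize}
\item $\rhoe b'(\rhoe)\div\vu_\eps$ is bounded in $L^{\frac{2\beta}{\beta+2}}((0,T)\times D_\eps)$, and $1<\frac{2\beta}{\beta+2}<2$.
\item $\B_\eps$ is therefore uniformly bounded into $L^{\frac{2\beta}{\beta+2}}(0,T;W_0^{1,\frac{2\beta}{\beta+2}}(D_\eps))$.
\item The pairing with $\rhoe\vu_\eps$ is taken in $L^{\frac{2\beta}{\beta-2}}$ in time, with spatial exponent from interpolating $L^\infty L^2\cap L^2L^6$. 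It closes precisely when $\beta\geq 1+\frac d2$.
\item For $b_\kappa$, the logarithmic factor in $b_\kappa'$ costs an arbitrarily small $\delta$ (one works with $\beta-\delta$). This is where the strict inequality $\beta>1+\frac d2$ enters.
\end{itemize}
Your ``bootstrap'' fallback points in this direction, but you need it already for the first logarithmic estimate, not only for the $1+\kappa$ power.

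A smaller point concerns $b(\rho)=\rho$. There you cannot absorb the mean-value term $m\int p(\rhoe)\dd x$ using an $L^1$ bound on $p$ that this very step is supposed to produce. You need the strict inequality $m<\ovr$ for the conserved mean density, which follows from the finite initial pressure potential. Then split at $\rhoe=\frac{m+\ovr}{2}$: above it $p(\rhoe)(\rhoe-m)\geq\frac{\ovr-m}{2}p(\rhoe)$, and below it $p$ is bounded.
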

\begin{proof}
The equi-integrability and boundedness of $p(\rhoe)$ follow from \eqref{pre-equi} since for any $\kappa \geq 0$,
				\begin{align}\label{eqiint-pr}
				p(\rhoe) + p(\rhoe) |\log p(\rhoe)|^{1+\kappa} &\leq C \left(1+\rhoe p(\rhoe) + p(\rhoe) |\log(1 - \rhoe / \ovr) |^{1+\kappa} \right).
			\end{align}

            Indeed, clearly
            \begin{align*}
                p(\rhoe) = p(\rhoe) \mathbf{1}_{\rhoe < \ovr/2} + p(\rhoe) \mathbf{1}_{\rhoe \geq \ovr/2} \leq C (1 + \rhoe p(\rhoe)),
            \end{align*}
            and from the assumptions on the pressure in \eqref{gen-pr}, we can find some $\delta \in (0, \ovr)$ such that
			\begin{align*}
				\frac{c_\ovr }{2} \leq p(\rho)(\ovr-\rho)^\beta \leq \frac{3 c_\ovr}{2} \text{ for } \ovr-\delta < \rho <\ovr ,
			\end{align*} 
			where the choice of $\delta$ only depends on $\beta$ and the structure of the pressure $p$. 
			Thus, for any $\kappa \geq 0$, we have 
            \begin{align*}
    p(\rho_\eps) | \log p(\rho_\eps) |^{1+\kappa} 
    &\leq p(\rho_\eps) |\log p(\rho_\eps)|^{1+\kappa} \mathbf{1}_{0 \leq \rho_\eps \leq \ovr-\delta} + p(\rho_\eps) |\log p(\rho_\eps) |^{1+\kappa} \mathbf{1}_{\ovr-\delta < \rho_\eps < \ovr} \\
    &\leq C + p(\rho_\eps) \left| \log \left[ \frac{p(\rho_\eps)(1-\rho_\eps/\ovr)^\beta}{(1-\rho_\eps/\ovr)^\beta} \right] \right|^{1+\kappa} \mathbf{1}_{\ovr-\delta < \rho_\eps < \ovr} \\
    &= C + p(\rho_\eps) \left| \log[p(\rho_\eps)(1-\rho_\eps/\ovr)^\beta] - \beta \log (1-\rho_\eps/\ovr) \right|^{1+\kappa} \mathbf{1}_{\ovr-\delta < \rho_\eps < \ovr} \\
    &\leq C \lr{1 + p(\rho_\eps) \left| \log[p(\rho_\eps)(1-\rho_\eps/\ovr)^\beta] \right|^{1+\kappa} \mathbf{1}_{\ovr-\delta < \rho_\eps < \ovr} + p(\rho_\eps) \left| \log(1-\rho_\eps/\ovr) \right|^{1+\kappa} } \\
    &\leq C \left( 1 + p(\rho_\eps) + p(\rho_\eps) \left| \log(1 - \rho_\eps/\ovr) \right|^{1+\kappa} \right) \\
    &\leq C \left( 1 + \rho_\eps p(\rho_\eps) + p(\rho_\eps) \left| \log(1 - \rho_\eps/\ovr) \right|^{1+\kappa} \right),
\end{align*}
where the constant $C>0$ just depends on $\ovr, \delta, \beta$, and $\kappa$. The equi-integrability now follows by a De la Vall\'ee-Poussin argument since $s \mapsto s (\log s)^{1+\kappa}$ is superlinear when $s \to \infty$.\\

To show \eqref{pre-equi}, let us start with a more general observation. We test the momentum equation with $ \phi = \psi(t) \B_\eps \left[  b(\rhoe)- \frac{1}{| D_\eps|  } \int_{D_\eps} b(\rhoe ) \dd y \right] $ for some $\psi \in C^\infty_c(0,T)$. After a straightforward calculation, we find\footnote{For details, see, e.g., \cite{FeireislNovotny2009singlim}.}
\begin{align}\label{bog1}
	\int_0^T \psi \int_{D_\eps} p(\rhoe) \left[ b(\rhoe) - \frac{1}{|D_\eps|} \int_{D_\eps} b(\rhoe) \dd y \right] \dd x \dd t = \sum_{i=1}^{6} \mathcal{A}_i,
\end{align} 
where
\begin{align*}
    &  \mathcal{A}_1 = 	-\int_0^T \partial_t \psi   \int_{D_\eps} \rhoe \vu_\eps \cdot \B_\eps \left[  b(\rhoe)- \frac{1}{| D_\eps|  } \int_{D_\eps} b(\rhoe ) \dd x \right]  \dd x \dd t ,\\
    &  \mathcal{A}_2 = 	\int_0^T \psi \int_{D_\eps} \B_\eps \left[ \div ( b(\rhoe) \vu_\eps) \right]\dd x \dd t ,\\
    &  \mathcal{A}_3 = 	\int_0^T \psi \int_{D_\eps} \rhoe \vu_\eps \cdot  \B_\eps \left[ ( \rhoe b^\prime (\rhoe )- b(\rhoe))  \div \vu_\eps - \frac{1}{|D_\eps|} \int_{D_\eps} ( \rhoe b^\prime (\rhoe )- b(\rhoe))  \div \vu_\eps \dd y \right] \dd x \dd t ,\\
    &  \mathcal{A}_4 = 	\int_0^T \psi \int_{D_\eps} \ms(\nabla \vu_\eps )\; : \; \nabla \B_\eps \left[  b(\rhoe)- \frac{1}{| D_\eps|  } \int_{D_\eps} b(\rhoe ) \dd y \right]  \dd x \dd t ,\\
    &  \mathcal{A}_5 = - 	\int_0^T \psi \int_{D_\eps} \rhoe \vu_\eps \otimes \vu_\eps : \nabla \B_\eps \left[  b(\rhoe)- \frac{1}{| D_\eps|  } \int_{D_\eps} b(\rhoe ) \dd y \right] \dd x \dd t ,\\
    &  \mathcal{A}_6= -	\int_0^T \psi \int_{D_\eps} \rhoe \mathbf{f} \cdot \B_\eps \left[  b(\rhoe)- \frac{1}{| D_\eps|  } \int_{D_\eps} b(\rhoe ) \dd y \right] \dd x \dd t .
\end{align*}

The claim \eqref{pre-equi} is then proved in two steps.\\

\paragraph{\textbf{Step 1:}} Consider $b(\rhoe)=\rhoe$. In this case, first we observe that $\mathcal{A}_3=0$. Moreover, we denote 
\[  \B_\eps^{\ast} =  \B_\eps \left[  \rhoe - \frac{1}{| D_\eps| }  \int_{D_\eps} \rhoe \dd y \right] , \]
and notice that from Lemma~\ref{lem:Bog} and the uniform bound on $\rhoe \in L^\infty(0,T; L^\infty(D_\eps))$ we have
\begin{align*}
\|\B_\eps^\ast\|_{L^\infty(0,T; W^{1,2}(D_\eps))} \lesssim \|\rhoe\|_{L^\infty(0,T; L^2(D_\eps))} \lesssim 1.
\end{align*}
We estimate each of the $\mathcal A_i$ separately. To ease the presentation, we will use the notation $L^p L^q$ instead of $L^p(0,T; L^q(D_\eps))$.
\begin{itemize}[leftmargin=*]
\item For $\mathcal A_1$:
\begin{align*}
    |\mathcal A_1| \lesssim \|\rhoe\|_{L^\infty L^\infty} \|\vu_\eps\|_{L^2 L^2} \|\B_\eps^*\|_{L^2 L^2} \lesssim 1.
\end{align*}
\item For $\mathcal A_2$: since $\B_\eps : \dot{W} \to L^2$ is a uniformly bounded operator, we find
\begin{align*}
    |\mathcal A_2| \lesssim \|\B_\eps [\div (\rhoe \vu_\eps)]\|_{L^2 L^2} \lesssim \|\rhoe \vu_\eps\|_{L^2 L^2} \lesssim 1 .
\end{align*}
The fact that $\rhoe \vu_\eps \in \dot{W}$ follows from continuity equation and uniform bounds on the functions (see \cite{FeireislNovotny2009singlim} if needed).
\item For $ \mathcal{A}_4 $:
\begin{align*}
    |\mathcal{A}_4| \lesssim \Vert \ms(\nabla \vu_\eps ) \Vert_{L^2 L^2}  \Vert \nabla \B_\eps^{\ast}   \Vert_{L^2 L^2} \lesssim 1 .
\end{align*}
\item For $ \mathcal{A}_5 $: thanks to Sobolev embedding $W^{1,2} \hookrightarrow L^4$, we get
\begin{align*}
    |\mathcal{A}_5| \lesssim \Vert \rhoe \Vert_{L^\infty L^\infty} \Vert \vu_\eps \otimes \vu_\eps  \Vert_{L^1 L^2}  \Vert \nabla \B_\eps^{\ast}   \Vert_{L^\infty L^2} \lesssim 1 .
\end{align*}
\item For $\mathcal A_6$:
\begin{align*}
    |\mathcal A_6| \lesssim \|\rhoe\|_{L^\infty L^\infty} \|\vc f\|_{L^2 L^2} \|\B_\eps^*\|_{L^2 L^2} \lesssim 1.
\end{align*}

In total, we now have
\begin{align*}
    \int_0^T \psi \int_{D_\eps} p(\rhoe) \left[ \rhoe - \frac{1}{|D_\eps|} \int_{D_\eps} \rhoe \dd y \right] \dd x \dd t \lesssim 1. 
\end{align*}
To get rid of the mean value, we deduce from $\rho_{\eps, 0} \leq \ovr$ and $\int_{D_\eps} P(\rho_{\eps, 0}) \dd x < \infty$ that\footnote{Note that if $m=\ovr$, then $\int_{D_\eps} (\rho_{\eps, 0} - \ovr) \dd x = 0$, leading to $\rho_{\eps, 0} = \ovr$ a.e., contradiction.}
\begin{align*}
    \frac{1}{|D_\eps|} \int_{D_\eps} \rhoe (t) \dd x = \frac{1}{|D_\eps|} \int_{D_\eps} \rho_{\eps, 0} \dd x = m < \ovr.
\end{align*}
Then we can argue as in \cite{FeireislZhang2010} that
\begin{align*}
    &\int_0^T \psi \int_{D_\eps} p(\rhoe) \left[ \rhoe - \frac{1}{|D_\eps|} \int_{D_\eps} \rhoe \dd y \right] \dd x \dd t = J_1 + J_2,\\
    J_1 &:= \int_0^T \psi \int_{\rhoe < (m + \ovr)/2} p(\rhoe) \left[ \rhoe - \frac{1}{|D_\eps|} \int_{D_\eps} \rhoe \dd y \right] \dd x \dd t , \\
    J_2 &:= \int_0^T \psi \int_{\rhoe \geq (m+\ovr)/2} p(\rhoe) \left[ \rhoe - \frac{1}{|D_\eps|} \int_{D_\eps} \rhoe \dd y \right] \dd x \dd t \\
    &\geq \frac{\ovr - m}{2} \int_0^T \psi \int_{\rhoe \geq (m+\ovr)/2} p(\rhoe) \dd x,
\end{align*}
and $J_1$ is bounded since $D_\eps$ is a bounded domain, leading to a uniform bound for $J_2$ and in turn for $p(\rhoe) \in L^1 L^1$. This yields
\begin{align*}
    \int_0^T \psi \left[ \int_{D_\eps} p(\rhoe) \dd x \right] \left[ \frac{1}{|D_\eps|} \int_{D_\eps} \rhoe \dd y \right] \dd t \lesssim 1
\end{align*}
such that also $\rhoe p(\rhoe) \in L^1 L^1$ is uniformly bounded.\\
\end{itemize}
		
\paragraph{\textbf{Step 2:}} Consider 
\begin{align}\label{b_log}
    b(\rhoe) = \begin{cases}
        0 & \text{if } 0 \leq \rhoe \leq \frac{\ovr}{4},\\
        \text{smooth} & \text{if } \frac{\ovr}{4} \leq \rhoe \leq \frac{\ovr}{2}, \\
        -\log(1 - \rhoe / \ovr) & \text{if } \frac{\ovr}{2} \leq \rhoe <\ovr,
    \end{cases}
\end{align}
    where ``smooth'' means a connecting function (e.g.~a spline) such that $b(\rhoe) \in C^1([0,\ovr))$. We additionally introduce the notation $\eta(\rho)= \log (1 - \rho / \ovr) $. Note that then always $|b(\rhoe)| \leq C |\eta(\rhoe)|$ as well as $|b'(\rhoe)| \leq C |\eta'(\rhoe)|$, where $C>0$ just depends on $\ovr$ and the form of the spline.\\

First, we list the following observations already given in \cite{FeireislLuMalek2016}: 
\begin{itemize}[leftmargin=*]
    \item From the properties of $p$ in \eqref{gen-pr} and the definition of $P$ in \eqref{press-pot}, there exist positive constants $c_1$ and $c_2$ such that
    \begin{align*}
       P(\rho) \geq \frac{c_1}{(\ovr- \rho)^{\beta-1} } - c_2,
    \end{align*}
implying, in particular,
\begin{align*}
    P(\rho) \geq c_1 | \eta(\rho) |^q -c_2 \text{ for any } 1\leq q <\infty ,
\end{align*}
such that $\eta(\rhoe)$ is uniformly bounded in $L^\infty L^q$ for any $1 \leq q <\infty$.

\item Similarly, there exist positive constants $c_3$ and $c_4$ such that
\begin{align*}
        | p(\rho) | \geq  c_3 |\eta^\prime (\rho)|^\beta  -    c_4,
\end{align*}
giving that $\eta^\prime (\rhoe)$ is uniformly bounded in $L^\beta L^\beta$ since $p(\rhoe)$ is uniformly bounded in $L^1 L^1$ by the previous step.
\end{itemize} 

Let us now highlight why the condition $\beta>1 + \frac{d}{2}$ plays an important role. To this end, we focus on the term $\mathcal{A}_3$: 

\begin{align*}
    \mathcal{A}_3& = 	\int_0^T \psi \int_{D_\eps} \rhoe \vu_\eps \cdot  \B_\eps \left[ ( \rhoe b^\prime (\rhoe )- b(\rhoe))  \div \vu_\eps - \frac{1}{|D_\eps|} \int_{D_\eps} ( \rhoe b^\prime (\rhoe )- b(\rhoe))  \div \vu_\eps \dd y \right] \dd x \dd t \\ 
    &= 	\int_0^T \psi \int_{D_\eps} \rhoe \vu_\eps \cdot  \B_\eps \left[ \rhoe b^\prime (\rhoe )  \div \vu_\eps - \frac{1}{|D_\eps|} \int_{D_\eps}  \rhoe b^\prime (\rhoe )  \div \vu_\eps \dd y \right] \dd x \dd t \\ 
    &\quad -	\int_0^T \psi \int_{D_\eps} \rhoe \vu_\eps \cdot  \B_\eps \left[  b(\rhoe)  \div \vu_\eps - \frac{1}{|D_\eps|} \int_{D_\eps}  b(\rhoe)  \div \vu_\eps \dd y \right] \dd x \dd t \\ 
    &=: \mathcal{A}_{3,1}+ \mathcal{A}_{3,2}.
\end{align*}

For $\mathcal A_{3,2}$, thanks to $|b(\rhoe)| \leq C |\eta(\rhoe)|$ and the embeddings $W^{1,1} \hookrightarrow L^\frac{d}{d-1}$ and $W^{1,2} \hookrightarrow L^d$, we find
\begin{align*}
    |\mathcal A_{3,2}| &\lesssim \|\rhoe\|_{L^\infty L^\infty} \|\vu_\eps\|_{L^2 L^d} \left\Vert \B_\eps \left[  b(\rhoe)  \div \vu_\eps - \frac{1}{|D_\eps|} \int_{D_\eps} b(\rhoe)  \div \vu_\eps \dd y \right]  \right\Vert_{L^2 L^{\frac{d}{d-1}} } \\
    & \lesssim 	\left\Vert \B_\eps \left[  b(\rhoe)  \div \vu_\eps - \frac{1}{|D_\eps|} \int_{D_\eps}  b(\rhoe)  \div \vu_\eps \dd y \right]  \right\Vert_{L^2 W^{1,1} } \\ 
    & \lesssim \|\eta(\rhoe) \div \vu_\eps\|_{L^2 L^1} \lesssim \|\eta(\rhoe)\|_{L^\infty L^2} \lesssim 1 + \|P(\rhoe)\|_{L^\infty L^1} \lesssim 1.
\end{align*}

The most difficult term is $\mathcal{A}_{3,1}$. From the bounds of $\eta'(\rhoe)$ and $\div \vu_\eps$, we have
\begin{align*}
    \rhoe \eta^\prime (\rhoe)\div  \vu_{\eps} \text{ bounded in } L^{\frac{2\beta}{2+\beta}} L^{\frac{2\beta}{2+\beta}} 
\end{align*}
such that
\begin{align*}
    \B_\eps \left[\rhoe b'(\rhoe) \div \vu_\eps - \frac{1}{|D_\eps|} \int_{D_\eps} \rhoe b'(\rhoe) \div \vu_\eps \dd y \right] \text{ bounded in } L^\frac{2\beta}{2+\beta} W_0^{1, \frac{2\beta}{2+\beta}},
\end{align*}
and by Sobolev embedding that
\begin{align*}
    \B_\eps \left[\rhoe b'(\rhoe) \div \vu_\eps - \frac{1}{|D_\eps|} \int_{D_\eps} \rhoe b'(\rhoe) \div \vu_\eps \dd y \right] \text{ bounded in } L^\frac{2\beta}{2+\beta} L^\frac{2d\beta}{2d + \beta(d-2)} .
\end{align*}

Moreover, we have by Sobolev embedding $W_0^{1,2}(D_\eps) \hookrightarrow L^6(D_\eps)$ and $\rhoe \in L^\infty L^\infty$ that\footnote{If $d=2$, one might take arbitrary $L^q$ instead of $q=6$, however, this gives a bound for $\rhoe \vu_\eps$ in $L^r L^\frac{2rq}{4+q(r-2)}$ and thus changes the eventual inequality for $\beta$ to $\beta(q-1) \geq 2(q-1)$, giving no additional information.}
\begin{align*}
    \rhoe \vu_\eps \text{ is uniformly bounded in } L^\infty L^2 \cap L^2 L^6,
\end{align*}
which by interpolation gives for any $r \in [2,\infty]$ that
\begin{align*}
    \rhoe \vu_\eps \text{ is uniformly bounded in } L^r L^\frac{6r}{3r-4}.
\end{align*}
Hence, we may estimate in 3D that
\begin{align*}
    |\mathcal A_{3,1}| &\lesssim \|\rhoe \vu_\eps\|_{L^\frac{2\beta}{\beta-2} L^\frac{6\beta}{5\beta-6}} \left\Vert \B_\eps \left[\rhoe b'(\rhoe) \div \vu_\eps - \frac{1}{|D_\eps|} \int_{D_\eps} \rhoe b'(\rhoe) \div \vu_\eps \dd y \right] \right\Vert_{L^\frac{2\beta}{2+\beta} L^\frac{6\beta}{6+\beta}} \\
    &\lesssim \|\rhoe \eta'(\rhoe) \div \vu_\eps\|_{L^\frac{2\beta}{2+\beta} L^\frac{2\beta}{2+\beta}} \lesssim 1,
\end{align*}
provided $2\beta/(\beta-2) \geq 2$ which is always true, and
\begin{align*}
    \frac{6\beta}{5\beta-6} \leq \frac{6 \cdot 2\beta/(\beta-2)}{3 \cdot 2\beta/(\beta-2)-4} \Leftrightarrow \beta \geq \frac52.
\end{align*}

If $d=2$, then we estimate
\begin{align*}
    |\mathcal A_{3,1}| &\lesssim \|\rhoe \vu_\eps\|_{L^\frac{2\beta}{\beta-2} L^\frac{\beta}{\beta-1}} \left\Vert \B_\eps \left[\rhoe b'(\rhoe) \div \vu_\eps - \frac{1}{|D_\eps|} \int_{D_\eps} \rhoe b'(\rhoe) \div \vu_\eps \dd y \right] \right\Vert_{L^\frac{2\beta}{2+\beta} L^\beta} \\
    &\lesssim \|\rhoe \eta'(\rhoe) \div \vu_\eps\|_{L^\frac{2\beta}{2+\beta} L^\frac{2\beta}{2+\beta}} \lesssim 1,
\end{align*}
provided
\begin{align*}
    \frac{\beta}{\beta-1} \leq \frac{6 \cdot 2\beta/(\beta-2)}{3 \cdot 2\beta/(\beta-2) - 4} \Leftrightarrow \beta \geq 2.
\end{align*}
In turn, we conclude that $\mathcal A_{3,1}$ is uniformly bounded provided $\beta \geq 1+ \frac{d}{2}$.\\

Next, due to $\eta(\rhoe) \in L^\infty L^q$ uniformly for any $1 \leq q < \infty$, the terms $\mathcal A_i$, $i=1,4,5,6$, are uniformly bounded the same way as in Step~1. For $\mathcal A_2$, we notice that $\eta(\rhoe) \vc u_\eps \in L^2 L^2$ with $\eta(\rhoe) \vc u_\eps \cdot \vc n |_{\partial D_\eps} = 0$ in the sense of normal traces such that
\begin{align*}
    |\mathcal A_2| \lesssim \|\eta(\rhoe) \vc u_\eps\|_{L^2 L^2} \lesssim \|\eta(\rhoe)\|_{L^\infty L^3} \|\vc u_\eps\|_{L^2 L^6} \lesssim 1,
\end{align*}
so if $\beta \geq 1+ \frac{d}{2}$, we find uniformly in $\eps$ that $$\int_0^T \int_{D_\eps}  p(\rhoe) \left[ b(\rhoe) - \frac{1}{|D_\eps|} \int_{D_\eps} b(\rhoe) \dd y \right] \dd x \dd t \lesssim 1,$$ where $b(\rhoe)$ is as in \eqref{b_log}. Now
\begin{align*}
    \int_0^T \int_{D_\eps} p(\rhoe) \left[ \frac{1}{|D_\eps|} \int_{D_\eps} b(\rhoe) \dd y \right] \dd x \dd t &\lesssim \int_0^T \|p(\rhoe)\|_{L^1(D_\eps)} \|b(\rhoe)\|_{L^1(D_\eps)} \dd t \\
    &\lesssim \|p(\rhoe)\|_{L^1 L^1} \|\eta(\rhoe)\|_{L^\infty L^1} \lesssim 1
\end{align*}
thanks to the uniform bound on $p(\rhoe) \in L^1 L^1$ and the fact that $\eta(\rhoe)$ is uniformly bounded in $L^\infty L^q$ for any $q \geq 1$. Thus, we find
\begin{align*}
    p(\rhoe) b(\rhoe) \text{ uniformly bounded in } L^1((0,T) \times D_\eps).
\end{align*}
This implies by the fact that $|\eta(\rhoe)| = |\eta(\rhoe)| \mathbf{1}_{\rhoe \leq \ovr/2} + |\eta(\rhoe)| \mathbf{1}_{\rhoe \geq \ovr/2} \leq C(\ovr) + |b(\rhoe)|$ and the uniform bound on $p(\rhoe)$ in $L^1 L^1$ that also $p(\rhoe) \log(1 - \rhoe/\ovr) \in L^1((0,T) \times D_\eps)$ is uniformly bounded, provided $\beta \geq 1 + \frac{d}{2}$. This shows \eqref{pre-equi}.\\

\paragraph{\textbf{The $\kappa$-estimate:}} A crucial feature to show strong convergence of the density later on will be the equi-integrability of the sequence $\{p(\rhoe) \log p(\rhoe)\}_{\eps \in (0,1)}$. To ensure this, we seek a bound strictly stronger than $L \log L((0,T) \times D_\eps)$ for this sequence, which will require the stronger assumption $\beta > 1 + \frac{d}{2}$. To this end, we fix a parameter $\kappa \in (0, \infty)$ and define the renormalization function $b_\kappa(\rhoe)$ as
\begin{align*}
	b_{\kappa}(\rhoe) = \begin{cases}
		0 & \text{if } 0 \leq \rhoe \leq \frac{\ovr}{4},\\
		\text{smooth} & \text{if } \frac{\ovr}{4} \leq \rhoe \leq \frac{\ovr}{2}, \\
		[-\log(1 - \rhoe / \ovr)]^{1+\kappa} & \text{if } \frac{\ovr}{2} \leq \rhoe <\ovr,
	\end{cases}
\end{align*}
where ``smooth'' denotes a spline such that $b_\kappa(\rho_\eps) \in C^1([0,\ovr))$. We also denote $\eta_\kappa(\rho) = [-\log(1 - \rho / \ovr)]^{1+\kappa}$. By construction, we have $|b_\kappa(\rho)| \leq C|\eta_\kappa(\rho)|$ and $|b_\kappa'(\rho)| \leq C|\eta_\kappa'(\rho)|$, where
\begin{align*}
	\eta_\kappa'(\rho) = -(1+\kappa) [-\log(1 - \rho / \ovr)]^\kappa \frac{\ovr}{\ovr - \rho}.
\end{align*}
The same as in Step~2, we have from $P(\rho) \gtrsim (\ovr - \rho)^{-(\beta-1)} - 1$ that for any $1 \leq q < \infty$, $P(\rho) \geq c_1 |\eta_\kappa(\rho)|^q - c_2$, implying that $\{\eta_\kappa(\rhoe)\}_{\eps \in (0,1)}$ is uniformly bounded in $L^\infty L^q$ for all finite $q$. However, unlike in the case $\kappa=0$, we will see that the term $\eta_\kappa '(\rho)$ is just bounded in $L^{\beta - \delta} L^{\beta-\delta}$ for any $\delta \in (0, \beta-1]$. Indeed, given $p(\rho) \gtrsim (\ovr - \rho)^{-\beta} - 1$, we find
\begin{align*}
	|\eta_\kappa '(\rho)|^q \lesssim 1 + p(\rho) \Leftrightarrow [-\log(1- \rho / \ovr)]^{q\kappa} \leq C(\kappa, q) (\ovr - \rho)^{-(\beta-q)} \Leftrightarrow q \lneq \beta,
\end{align*}
where strict inequality comes from $\kappa > 0$. Note that $C(\kappa, q) \to \infty$ as $\kappa \to \infty$, but actually no bound on $\kappa$ is needed here. Moreover, since we assumed $\beta > 1 + \frac{d}{2}$, we always can choose $q=2$ such that $b_\kappa(\rho)$ is a proper function in the renormalized continuity equation \eqref{renCE}. This ensures that the source term $(\rho b_\kappa'(\rhoe) - b_\kappa(\rhoe))\div \vu_\eps$ is sufficiently integrable to be tested against the operator $\B_\eps$. \\

Now, we recall \eqref{bog1}, where we test the momentum equation with $$\phi = \psi(t) \B_\eps \left[  b_\kappa(\rhoe)  - \frac{1}{|D_\eps|} \int_{D_\eps}  b_\kappa(\rhoe) \dd y \right] .$$ Thanks to the bounds on $\eta_\kappa(\rhoe)$, which give accordingly the same bounds on $b_\kappa(\rhoe)$, we can follow the procedure of Step~1 for each of the integrals $\mathcal{A}_i$, $i = 1,2,4,5,6$, leading to $|\mathcal{A}_i| \lesssim 1$. The only crucial term here is, like in Step~2, $\mathcal{A}_3$. Recall this critical term as
\begin{align*}
    \mathcal{A}_3& = 	\int_0^T \psi \int_{D_\eps} \rhoe \vu_\eps \cdot  \B_\eps \left[ ( \rhoe b_{\kappa}^\prime (\rhoe )- b_{\kappa}(\rhoe))  \div \vu_\eps - \frac{1}{|D_\eps|} \int_{D_\eps} ( \rhoe b_{\kappa}^\prime (\rhoe )- b_{\kappa}(\rhoe))  \div \vu_\eps \dd y \right] \dd x \dd t \\ 
    &= 	\int_0^T \psi \int_{D_\eps} \rhoe \vu_\eps \cdot  \B_\eps \left[ \rhoe b_{\kappa}^\prime (\rhoe )  \div \vu_\eps - \frac{1}{|D_\eps|} \int_{D_\eps}  \rhoe b_{\kappa}^\prime (\rhoe )  \div \vu_\eps \dd y \right] \dd x \dd t \\ 
    &\quad -	\int_0^T \psi \int_{D_\eps} \rhoe \vu_\eps \cdot  \B_\eps \left[  b_{\kappa}(\rhoe)  \div \vu_\eps - \frac{1}{|D_\eps|} \int_{D_\eps}  b_{\kappa}(\rhoe)  \div \vu_\eps \dd y \right] \dd x \dd t \\ 
    &=: \mathcal{A}_{3,1}+ \mathcal{A}_{3,2}.
\end{align*}
The term $\mathcal{A}_{3,2}$ is handled the very same way as in Step~2, so we will just focus on $\mathcal{A}_{3,1}$ involving the singular derivative $b_\kappa'(\rhoe) \approx (\ovr-\rhoe)^{-1} [-\log(1 - \rhoe / \ovr)]^\kappa$. Recalling $(\ovr-\rhoe)^{-1} \in L^\beta L^\beta$, the logarithmic factor in $b_\kappa'(\rhoe)$ induces a  loss of integrability $\delta> 0$, such that $\rhoe b_\kappa'(\rhoe) \in L^{\beta-\delta} L^{\beta-\delta}$ for any $\delta \in (0, \beta-1]$. Note that this loss is indeed independent of $\kappa$, however, the norm of $\rhoe b_\kappa '(\rhoe)$ in $L^{\beta- \delta} L^{\beta - \delta}$ will of course depend on both parameters $\kappa$ and $\delta$. By H\"{o}lder's inequality with $\text{div } \vu_\eps \in L^2 L^2$, the source term satisfies
\begin{align*}
    \rhoe b_\kappa'(\rhoe) \text{div } \vu_\eps \in L^{\frac{2(\beta-\delta)}{2+\beta-\delta}} L^{\frac{2(\beta-\delta)}{2+\beta-\delta}} \text{ uniformly in } \eps.
\end{align*}
Now, we can repeat the steps for $\mathcal{A}_{3,1}$ in Step~2 upon replacing each $\beta$ by $\beta - \delta$ there, leading especially to
\begin{align*}
    &\lr{\frac{2d(\beta-\delta)}{2d+(\beta-\delta)(d-2)}}' = \frac{2d(\beta - \delta)}{(\beta-\delta)(d+2)-2d} \leq \frac{6 \cdot 2(\beta-\delta)/(\beta-\delta-2)}{3 \cdot 2(\beta-\delta)/(\beta-\delta-2)-4} \\
    &\Leftrightarrow \beta - \delta \geq 1 + \frac{d}{2} .
\end{align*}
Hence, we can choose $\delta \in (0, \beta - 1 - \frac{d}{2})$ to ensure that $|\mathcal{A}_{3,1}| \lesssim 1$, where the implicit constant depends on the chosen values of $\delta \in (0, \beta - 1 - \frac{d}{2})$ and $\kappa \in (0, \infty)$. \\

In total, we find that $p(\rhoe) b_\kappa(\rhoe)$ is uniformly bounded in $L^1 L^1$, which implies $p(\rhoe) |\log p(\rhoe)|^{1+\kappa} \in L^1 L^1$ uniformly in $\eps$, establishing the equi-integrability of the sequence $\{p(\tvr_\eps) \log p(\tvr_\eps)\}_{\eps \in (0,1)}$ by choosing $\Psi(s) = s [\log s]^\kappa$ in the De la Vall\'ee-Poussin criterion, and the fact that for $t \geq e^2$, we have $\Psi(t \log t) \leq 2^\kappa t [\log t]^{1+\kappa}$.

\end{proof}

	
\section{Convergence}\label{sec:Conv}	
Before showing convergence of the involved functions, we will state three lemmata needed in the sequel. The first two concern special matrix-valued cut-off functions as introduced in \cite{Bravin2024, OschmannPokorny2023} for $d=3$, and in \cite{NecasovaOschmann2023} for $d=2$.
\begin{lemma}\label{lemPhi}
    Let $d=3$ and $\delta>0$ such that $2 \eps^\alpha < \eps^{1+\delta}$. There exist matrix-valued functions $\Phi_\eps$ fulfilling
    \begin{align*}
        \Phi_\eps &\in W^{1,q}(D)\cap L^\infty(D) \text{ for any } q \in [1,\infty),\\
        \div \Phi_\eps &= 0,\\
        \Phi_\eps &=0 \text{ on } D\setminus D_\eps,\\
        \Phi_\eps &=\mathbb{I} \text{ on } D\setminus \bigcup_{i=1}^{N(\eps)} B_{\eps^{1+\delta}}(x_i(\eps)).
    \end{align*}
    Moreover, $\|\Phi_\eps\|_{L^\infty(D)}\lesssim 1$, and for any $1\leq q< \infty$,
    \begin{align*}
        \|\Phi_\eps-\mathbb{I}\|_{L^q(D)}^q &\lesssim \eps^{3\delta}, &&\|\nabla\Phi_\eps\|_{L^q(D)}^q \lesssim \eps^{(3-q)\alpha - 3} \begin{cases}
            |\eps^{(3-2q)(1+\delta-\alpha)} - 1| & \text{if } q\neq \frac32,\\
            |\log(\eps^{1+\delta-\alpha})| & \text{if } q=\frac32.
        \end{cases}
    \end{align*}
    In turn, for any $\psi\in C_c^\infty(D;\R^3)$ and any $q \geq 2$,
    \begin{align*}
        \|\nabla (\Phi_\eps \psi)-\Phi_\eps \nabla\psi\|_{L^q(D)}^q &\lesssim \eps^{(3-q)\alpha - 3} \|\psi\|_{L^\infty(D)}^q.
    \end{align*}
\end{lemma}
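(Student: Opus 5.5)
We construct $\Phi_\eps$ hole by hole, as a divergence-free correction of a scalar cut-off times the identity. Fix a radial cut-off $\chi\in C^\infty(\R^3)$ with $\chi=0$ on $B_1$, $\chi=1$ outside $B_2$ and $0\le\chi\le1$. For each hole $i$ set $\chi_i(x)=\chi\bigl((x-x_i)/a_\eps\bigr)$, so that $\chi_i=0$ on $B_{a_\eps}(x_i)\supset K_\eps\cap B_\eps(x_i)$ and $\chi_i=1$ outside $B_{2a_\eps}(x_i)$. The matrix field $\chi_i\mathbb I$ is not divergence free; its columns have divergence $\nabla\chi_i$, supported in the annulus $A_i=B_{2a_\eps}(x_i)\setminus B_{a_\eps}(x_i)$. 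We therefore correct column-wise on the larger annulus $\tilde A_i=B_{\eps^{1+\delta}}(x_i)\setminus B_{a_\eps}(x_i)$, which is nonempty and contains $A_i$ since $2\eps^\alpha<\eps^{1+\delta}$.

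Define on $\tilde A_i$
\begin{align*}
\Phi_\eps e_k=\chi_i e_k-\B_{\tilde A_i}\bigl[\del_k\chi_i\bigr],
\end{align*}
where $\B_{\tilde A_i}$ is a Bogovski\u{\i} operator on $\tilde A_i$. The compatibility condition holds: $\int_{\tilde A_i}\del_k\chi_i\dd x=0$ because $\chi_i=1$ on the outer boundary, which is a sphere, and $\int_{\del B_R}n_k\dd S=0$; the inner boundary contributes nothing since $\chi_i=0$ there. Set $\Phi_\eps=0$ in $B_{a_\eps}(x_i)$ and $\Phi_\eps=\mathbb I$ outside $\bigcup_i B_{\eps^{1+\delta}}(x_i)$. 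The Bogovski\u{\i} correction vanishes on $\del\tilde A_i$, so the pieces glue in $W^{1,q}$. This gives $\div\Phi_\eps=0$, $\Phi_\eps=0$ on $K_\eps$ and $\Phi_\eps=\mathbb I$ away from the enlarged balls. The balls $B_{\eps^{1+\delta}}(x_i)$ are disjoint and stay inside $D$ by the separation assumptions.

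The estimates rest on scaling. A cleaner route is explicit: for radial $\chi$ one can take $\Phi_\eps=\mathbb I$ minus the curl-type field $\mathrm{curl}\bigl(g(|x|)\times\cdot\bigr)$, and radial ODEs make $\nabla\Phi_\eps$ explicit. For a plan, however, the Bogovski\u{\i} route suffices. On annuli $B_R\setminus B_r$ with $R/r$ large, the Bogovski\u{\i} constant is not uniform, so we split $\tilde A_i$ dyadically, or equivalently take the known decaying solution of the homogeneous problem. The correction then behaves like $(a_\eps/|x-x_i|)^{3}$ for $|x-x_i|\ge 2a_\eps$, with gradient of order $a_\eps^{-1}(a_\eps/|x-x_i|)^{2}$ near the hole. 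Concretely, for radial $\chi$ we choose $\mathbf w_k$ as a potential-flow (dipole-type) field
\begin{align*}
\mathbf w_k=\nabla\bigl(\del_k G\bigr)\text{-type},\qquad |\mathbf w_k|\lesssim a_\eps^{3}|x-x_i|^{-3},\qquad |\nabla\mathbf w_k|\lesssim a_\eps^{3}|x-x_i|^{-4}\quad\text{on }|x-x_i|\ge 2a_\eps,
\end{align*}
cut off at radius $\eps^{1+\delta}$ by an additional Bogovski\u{\i} step on $B_{\eps^{1+\delta}}\setminus B_{\eps^{1+\delta}/2}$. That last step has a uniform constant by scaling and produces only small errors.

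With these profiles the computations are direct. Boundedness $\|\Phi_\eps\|_{L^\infty}\lesssim1$ is immediate. For the $L^q$ deviation, $\Phi_\eps-\mathbb I$ is supported in the enlarged balls, so
\begin{align*}
\|\Phi_\eps-\mathbb I\|_{L^q}^q\lesssim N(\eps)\,\eps^{3(1+\delta)}\lesssim\eps^{-3}\eps^{3+3\delta}=\eps^{3\delta}.
\end{align*}
For the gradient, summing over $N(\eps)\lesssim\eps^{-3}$ holes gives
\begin{align*}
\|\nabla\Phi_\eps\|_{L^q}^q\lesssim \eps^{-3}\Bigl(a_\eps^{3-q}+\int_{2a_\eps}^{\eps^{1+\delta}}a_\eps^{3q}r^{-4q}r^2\dd r\Bigr).
\end{align*}
This is where the stated case distinction at $q=3/2$ is expected to come from. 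The $r$-power of the integrand depends on the profile, so matching the exact bracket $|\eps^{(3-2q)(1+\delta-\alpha)}-1|$, or the logarithm at $q=3/2$, requires the precise decay of the chosen correction. Adjusting the profile, for instance a slower gradient decay $|\nabla\mathbf w_k|\sim a_\eps r^{-2}$ from a Stokes-type corrector, gives $\int a_\eps^{q}r^{2-2q}\dd r$. That reproduces $a_\eps^{3-q}\bigl((\eps^{1+\delta}/a_\eps)^{3-2q}-1\bigr)$, hence the bracket with the log at $q=3/2$, and $\eps^{-3}a_\eps^{3-q}=\eps^{(3-q)\alpha-3}$. We aim for this form, since any faster decay only improves the bound.

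The final estimate follows from the product rule. Since
\begin{align*}
\nabla(\Phi_\eps\psi)-\Phi_\eps\nabla\psi=(\nabla\Phi_\eps)\psi,
\end{align*}
its $L^q$ norm is bounded by $\|\psi\|_{L^\infty}\|\nabla\Phi_\eps\|_{L^q}$. For $q\ge2>3/2$ the bracket is at most $1$ because $\eps^{(3-2q)(1+\delta-\alpha)}\le1$, which gives $\eps^{(3-q)\alpha-3}$.

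The main obstacle is obtaining the Bogovski\u{\i}/corrector bounds on the highly eccentric annuli uniformly in $\eps$. We would handle it by the explicit radial (Stokes-type) construction rather than an abstract operator.
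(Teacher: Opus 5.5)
Your architecture is sound: a cut-off times the identity plus a divergence-free correction near each hole, summation over the $N(\eps)\lesssim\eps^{-3}$ disjoint balls, the $L^q$ bound on $\Phi_\eps-\mathbb I$ from boundedness and support, and the product rule for the last estimate. The gap is that the two quantitative claims carrying the lemma, $\|\Phi_\eps\|_{L^\infty(D)}\lesssim 1$ and the bound on $\|\nabla\Phi_\eps\|_{L^q(D)}^q$, are never proved for any single construction.

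You set up a Bogovski\u{\i} correction on the eccentric annulus $\tilde A_i$ and then drop it. (For the gradient bound that worry is unfounded, since such annuli are John domains with constants independent of $\eps^{1+\delta}/a_\eps$. The $L^\infty$ bound would still need an argument, and you never return to this route.) The replacement you call concrete, a dipole potential-flow field $\mathbf w_k$, does not work as described. The natural global choice $\mathbf w_k=\nabla\partial_k\bigl(G*(\chi_i-1)\bigr)$ equals $-e_k/3$ inside $B_{a_\eps}(x_i)$, because potential flow does not satisfy no-slip. So $\chi_i e_k-\mathbf w_k$ is not zero on the hole, and you never say what $\mathbf w_k$ is inside $B_{2a_\eps}(x_i)$, which is where that condition must be enforced. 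For the gradient you then switch to an unconstructed Stokes-type corrector, write that you ``aim for this form'', and list the uniform corrector bounds as the unresolved main obstacle. As written, the central estimate of the lemma is not established.

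The missing idea is that neither a tail nor an eccentric domain is needed. Your compatibility computation works verbatim on $A_i=B_{2a_\eps}(x_i)\setminus B_{a_\eps}(x_i)$, since $\chi_i=1$ already on $\partial B_{2a_\eps}(x_i)$. Hence the whole corrector lives at scale $a_\eps$, and the lemma reduces to scaling. Fix one smooth matrix field $\Phi_{\mathrm{ref}}$ on $\R^3$ with divergence-free columns, $\Phi_{\mathrm{ref}}=0$ on $\overline{B_1}$ and $\Phi_{\mathrm{ref}}=\mathbb I$ outside $B_2$. The curl construction you mention does this: $\Phi_{\mathrm{ref}}e_k=\mathrm{curl}\bigl(\chi(y)\,\tfrac12 e_k\times y\bigr)=\chi e_k+\tfrac12\nabla\chi\times(e_k\times y)$. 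Then set $\Phi_\eps(x)=\Phi_{\mathrm{ref}}\bigl((x-x_i)/a_\eps\bigr)$ on $B_\eps(x_i)$ and $\Phi_\eps=\mathbb I$ elsewhere. This gives:
\begin{itemize}
\item dilations preserve zero divergence;
\item the support properties follow from $2a_\eps<\eps^{1+\delta}<\eps$;
\item $\|\Phi_\eps\|_{L^\infty}=\|\Phi_{\mathrm{ref}}\|_{L^\infty}$;
\item $\|\nabla\Phi_\eps\|_{L^q(D)}^q=N(\eps)\,a_\eps^{3-q}\|\nabla\Phi_{\mathrm{ref}}\|_{L^q}^q\lesssim\eps^{(3-q)\alpha-3}$ for every $q\in[1,\infty)$.
\end{itemize}

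To reach the stated form you need a lower bound on the bracket. Your remark ``any faster decay only improves the bound'' tacitly uses it, but you only checked the reverse inequality (bracket at most $1$ for $q\ge 2$), which is what the last estimate needs. Since $\eps^{1+\delta-\alpha}>2$, the bracket is at least $2^{3-2q}-1$ for $q<\frac32$, at least $\log 2$ for $q=\frac32$, and at least $1-2^{3-2q}$ for $q>\frac32$.

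The bracket is just what a profile $|\nabla\Phi_\eps|\sim a_\eps|x-x_i|^{-2}$ spread up to radius $\eps^{1+\delta}$ produces, as your own integral shows. The paper imports the lemma from \cite{Bravin2024, OschmannPokorny2023} without proof. The rescaled compactly supported field satisfies every stated property, with a gradient estimate at least as strong for every $q$.
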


\begin{lemma}\label{lemPhi2}
    Let $d=2$, $\alpha > 2$ and $a_\eps = \exp(-\eps^{-\alpha})$. Then, there are matrix-valued functions $\Phi_\eps$ with
    \begin{align*}
        \Phi_\eps &\in W^{1,q}(D)\cap L^\infty(D) \text{ for any } q \in [1,\infty),\\
        \div \Phi_\eps &= 0,\\
        \Phi_\eps &=0 \text{ on } D\setminus D_\eps,\\
        \Phi_\eps &=\mathbb{I} \text{ on } D\setminus \bigcup_{i=1}^{N(\eps)} B_{\eps^\frac{\alpha}{2}}(x_i(\eps)).
    \end{align*}
    Moreover, $\|\Phi_\eps\|_{L^\infty(D)}\lesssim 1$, and for any $1\leq q< \infty$,
    \begin{align*}
        \|\Phi_\eps-\mathbb{I}\|_{L^q(D)}^q + \|\nabla \Phi_\eps\|_{L^2(D)}^2 \lesssim \eps^{\alpha-2}.
    \end{align*}
    In turn, for any $\psi\in C_c^\infty(D;\R^3)$ and any $q \leq 2$,
    \begin{align*}
        \|\nabla (\Phi_\eps \psi)-\Phi_\eps \nabla\psi\|_{L^q(D)} &\lesssim \eps^\frac{\alpha-2}{2} \|\psi\|_{L^\frac{2q}{2-q}(D)},
    \end{align*}
    with the convention that $1/0 = \infty$.
\end{lemma}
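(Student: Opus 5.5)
Since $\alpha>2$ forces $a_\eps=\exp(-\eps^{-\alpha})\ll\eps^{\alpha/2}\ll\eps$, the construction will be local around each hole: \[\Phi_\eps=\mathbb I+\sum_{i}\Phi^i_\eps,\] where $\Phi^i_\eps-\mathbb I$ is supported in $B_{\eps^{\alpha/2}}(x_i)$. The balls $B_{\eps^{\alpha/2}}(x_i)$ are pairwise disjoint and contained in $D$, because $|x_i-x_j|>2\eps$ and $\dist(x_i,\del D)>\eps$.

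\textbf{Step 1 (scalar cut-off).} On each annulus I will take the capacitary logarithmic cut-off
\[
\chi_i(x)=\frac{\log(|x-x_i|/a_\eps)}{\log(\eps^{\alpha/2}/a_\eps)}\quad\text{clipped to }[0,1].
\]
It equals $0$ on $B_{a_\eps}(x_i)\supset K_\eps\cap B_\eps(x_i)$ and $1$ outside $B_{\eps^{\alpha/2}}(x_i)$. The standard two-dimensional computation gives
\[
\|\nabla\chi_i\|_{L^2}^2\simeq\frac{2\pi}{\log(\eps^{\alpha/2}/a_\eps)}\simeq\eps^{\alpha}.
\]
This uses $\log(\eps^{\alpha/2}/a_\eps)=\eps^{-\alpha}+\tfrac\alpha2\log\eps\sim\eps^{-\alpha}$.

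\textbf{Step 2 (divergence-free matrix).} I cannot simply take $\chi\,\mathbb I$, since its divergence is nonzero. In 2D the natural fix is to use stream functions. For each column $e_k$, write $e_k=\nabla^\perp(x^\perp\cdot e_k)$ locally, using $\ell_k(x)=\pm(x-x_i)_{k'}$. Then define the $k$-th column of $\Phi_\eps$ as $\nabla^\perp(\chi_i\,\ell_k)$ on $B_{\eps^{\alpha/2}}(x_i)$, and as $e_k$ elsewhere.

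This column is automatically divergence-free. It equals $e_k$ where $\chi_i=1$ and vanishes where $\chi_i=0$, so the boundary conditions hold. Moreover, $\nabla^\perp(\chi\ell)=\chi e_k+\ell\nabla^\perp\chi$. Since $|\ell|\le|x-x_i|$ and $|\nabla\chi_i|\lesssim (|x-x_i|\log(\eps^{\alpha/2}/a_\eps))^{-1}$, the correction term $\ell\nabla^\perp\chi_i$ is bounded by $C\eps^{\alpha}$. This gives $\|\Phi_\eps\|_\infty\lesssim1$.

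For the derivatives, $\nabla\Phi_\eps$ is bounded pointwise by $|\nabla\chi_i|+|x-x_i||\nabla^2\chi_i|\lesssim \eps^{\alpha}/|x-x_i|$ on the annulus. Hence each ball contributes $\lesssim\eps^{2\alpha}\log(\eps^{\alpha/2}/a_\eps)\simeq\eps^{\alpha}$ to $\|\nabla\Phi_\eps\|_{L^2}^2$, and summing over $N(\eps)\lesssim\eps^{-2}$ balls gives $\eps^{\alpha-2}$.

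The $L^q$ bound on $\Phi_\eps-\mathbb I$ follows from $L^\infty$-boundedness and the measure of the union of balls, $\lesssim\eps^{-2}\eps^{\alpha}=\eps^{\alpha-2}$.

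One technical point: the clipping makes $\chi_i$ only Lipschitz, so the second derivatives carry jump terms. I will avoid this by composing with a smooth monotone profile $\theta(\chi_i)$, which keeps the same estimates and gives $W^{1,q}$ for all finite $q$ away from the singular point. The point $x_i$ is harmless anyway, since $\Phi_\eps=0$ there.

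\textbf{Step 3 (commutator estimate).} We have $\nabla(\Phi_\eps\psi)-\Phi_\eps\nabla\psi=(\nabla\Phi_\eps)\psi$ up to index ordering. By Hölder with exponents $\frac1q=\frac12+\frac{2-q}{2q}$,
\[
\|\nabla\Phi_\eps\,\psi\|_{L^q}\le\|\nabla\Phi_\eps\|_{L^2}\|\psi\|_{L^{2q/(2-q)}}\lesssim\eps^{\frac{\alpha-2}{2}}\|\psi\|_{L^{2q/(2-q)}}.
\]

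The main obstacle is Step 2. The second-derivative term $\ell\nabla^2\chi$ must be shown to contribute no more than the logarithmic capacity bound. The pointwise estimate $|\nabla^2\chi|\lesssim|\nabla\chi|/|x-x_i|$ should settle this, but the choice of smooth profile must preserve it.
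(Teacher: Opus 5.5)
Your proposal is correct and is essentially the construction the paper relies on: the paper does not reprove Lemma~\ref{lemPhi2} but takes it from \cite{Bravin2024, NecasovaOschmann2023}, where $\Phi_\eps$ is likewise built from stream functions $\nabla^\perp(\chi_i\,\ell_k)$ of a logarithmic capacity cut-off on $B_{\eps^{\alpha/2}}(x_i)\setminus B_{a_\eps}(x_i)$, giving $\|\nabla\Phi_\eps\|_{L^2(B_{\eps^{\alpha/2}}(x_i))}^2\lesssim \bigl(\log(\eps^{\alpha/2}/a_\eps)\bigr)^{-1}\simeq\eps^{\alpha}$ per hole and $\eps^{\alpha-2}$ after summing over $N(\eps)\lesssim\eps^{-2}$ holes. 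The smoothing step you flag is harmless: $\nabla^2\theta(\chi_i)=\theta''(\chi_i)\nabla\chi_i\otimes\nabla\chi_i+\theta'(\chi_i)\nabla^2\chi_i$, and the new term is bounded by $|x-x_i|^{-2}\bigl(\log(\eps^{\alpha/2}/a_\eps)\bigr)^{-2}$, which is smaller by a factor $\bigl(\log(\eps^{\alpha/2}/a_\eps)\bigr)^{-1}$ than the term you already control.
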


The third lemma is an easy convergence result we will use several times in the sequel:
\begin{lemma}\label{lem:equiConv}
    Let $\{f_\eps\}_{\eps \in (0,1)} \subset L^1((0,T) \times D)$ be a real-, vector-, or matrix-valued sequence of functions that is equi-integrable, and let $\Phi_\eps$ be the functions from Lemma~\ref{lemPhi} or \ref{lemPhi2}. Then
    \begin{align*}
        (\Phi_\eps - \mathbb I) f_\eps \to 0 \text{ in } L^1((0,T) \times D).
    \end{align*}
\end{lemma}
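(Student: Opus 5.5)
The plan is to combine uniform $L^\infty$ bounds with smallness of the set where $\Phi_\eps \neq \mathbb I$, and to let equi-integrability absorb the lack of higher integrability of $f_\eps$.

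\emph{Step 1.} Set
\begin{align*}
E_\eps := \bigcup_{i=1}^{N(\eps)} B_{r_\eps}(x_i(\eps)), \qquad r_\eps = \eps^{1+\delta} \ (d=3), \quad r_\eps = \eps^{\alpha/2} \ (d=2).
\end{align*}
By Lemma~\ref{lemPhi} resp.\ Lemma~\ref{lemPhi2}, $\Phi_\eps = \mathbb I$ outside $E_\eps$. Hence $(\Phi_\eps - \mathbb I) f_\eps$ is supported in $(0,T)\times E_\eps$. Using $N(\eps) \lesssim \eps^{-d}$, which follows from the separation $|x_i - x_j| > 2\eps$ inside the bounded set $D$, we get
\begin{align*}
|E_\eps| \lesssim \eps^{-d} r_\eps^d .
\end{align*}
This equals $\eps^{3\delta}$ for $d=3$ and $\eps^{\alpha-2}$ for $d=2$, so $|(0,T)\times E_\eps| \to 0$ because $\delta>0$ and $\alpha>2$.

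\emph{Step 2.} Since $\|\Phi_\eps\|_{L^\infty(D)} \lesssim 1$ uniformly, we have
\begin{align*}
\|(\Phi_\eps - \mathbb I) f_\eps\|_{L^1((0,T)\times D)} \lesssim \int_{(0,T)\times E_\eps} |f_\eps| \dd x \dd t .
\end{align*}
Equi-integrability means: for every $\theta>0$ there is $\sigma>0$ such that $\int_A |f_\eps| < \theta$ for all $\eps$ whenever $|A|<\sigma$. Choose $\eps_0$ with $|(0,T)\times E_\eps| < \sigma$ for $\eps<\eps_0$; then the right-hand side is at most $C\theta$ for all such $\eps$. Since $\theta$ is arbitrary, the claim follows. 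Vector- and matrix-valued $f_\eps$ are handled componentwise.

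\emph{Main obstacle.} There is no real difficulty. The only point to check is that equi-integrability is meant in the uniform small-sets sense, i.e.\ the De la Vallée-Poussin sense used in Lemma~\ref{imprPress}. If only the superlinear-function formulation is available, I would split $|f_\eps| \le M + |f_\eps|\mathbf 1_{|f_\eps|>M}$. The first part contributes $M|E_\eps|T \to 0$, and the second is small uniformly in $\eps$ for $M$ large.
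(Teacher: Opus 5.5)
Your proposal is correct and follows the paper's proof essentially verbatim. You localize $(\Phi_\eps - \mathbb I) f_\eps$ to $(0,T)\times\bigcup_{i} B_{r_\eps}(x_i(\eps))$, whose measure is $\lesssim \eps^{-d} r_\eps^d \to 0$, and then combine the uniform $L^\infty$ bound on $\Phi_\eps$ with the uniform absolute continuity of $\{f_\eps\}$; your extra remarks (deriving $N(\eps)\lesssim \eps^{-d}$ from the disjoint balls $B_\eps(x_i(\eps))\subset D$, and the truncation variant for the De la Vall\'ee-Poussin form of equi-integrability) only spell out what the paper leaves implicit.
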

\begin{proof}
    By the definition of $\Phi_\eps$, we see that
    \begin{align*}
        \Phi_\eps - \mathbb I = 0 \text{ on } D \setminus \bigcup_{i =1}^{N(\eps)} B_{\eps^\sigma (x_i(\eps))},
    \end{align*}
    where $\sigma=1+\delta$ if $d=3$, and $\sigma=\alpha/2$ if $d=2$. Moreover,
    \begin{align*}
        \Big|(0,T) \times \bigcup_{i=1}^{N(\eps)} B_{\eps^\sigma (x_i(\eps))} \Big| \lesssim \eps^{d(\sigma-1)} \to 0
    \end{align*}
    since always $\sigma >1$. Now, let $\text{\ss} >0$ be arbitrary. Since $f_\eps$ is equi-integrable, there exists $\text{\l} >0$ such that for any set $M \subset (0,T) \times D$ with $|M| <\text{\l}$, we have
    \begin{align*}
        \int_M |f_\eps| \dd x \dd t <\text{\ss}.
    \end{align*}
    Together with the fact that $\Phi_\eps \in L^\infty(D)$ is uniformly bounded, we find for $\eps$ small enough that
    \begin{align*}
        \int_0^T \int_D | (\Phi_\eps - \mathbb I) f_\eps | \dd x \dd t &= \int_0^T \int_{\bigcup_{i=1}^{N(\eps)} B_{\eps^\sigma (x_i(\eps))}} | (\Phi_\eps - \mathbb I) f_\eps | \dd x \dd t \\
        &\lesssim \int_0^T \int_{\bigcup_{i=1}^{N(\eps)} B_{\eps^\sigma (x_i(\eps))}} | f_\eps | \dd x \dd t <\text{\ss},
    \end{align*}
    thus finishing the proof since $\text{\ss}>0$ was arbitrary.
\end{proof}


\subsection{Limiting functions}
As a consequence of the uniform bounds given in Section~\ref{sec:Bds}, we obtain, at least for a subsequence,
\begin{align*}
    \tvu_\eps &\weak \vu \ \text{weakly in} \ L^2(0,T;W_0^{1,2}(D)),\\
    \tvr_\eps &\weak^\ast \rho \ \text{weakly-($\ast$) in} \ L^\infty(0,T; L^\infty(D)),\\
    p(\tvr_\eps) &\weak \overline{p(\rho)} \ \text{weakly in} \ L^1((0,T)\times D),
\end{align*}

where the convergence of the pressure follows from Dunford-Pettis theorem.


\subsection{Limit in the continuity equation}\label{convCE}
Repeating the arguments given in \cite{LuSchwarzacher2018, PokornySkrisovsky2021}, we recover
\begin{align}\label{approx_cont}
    &\del_t \tvr_\eps + \div (\tvr_\eps\tvu_\eps) = 0 \ \text{in} \ \Test'((0,T) \times \R^3)
\end{align}
as well as, after the limit passage,
\begin{align*}
    \del_t \tvr + \div (\tvr \tvu) = 0 \ \text{in} \ \Test'((0,T) \times \R^3).
\end{align*}
Indeed, from the uniform bounds on $\tvr_\eps$ and $\tvu_\eps$ derived in Section~\ref{sec:Bds}, we have
\begin{align*}
    \|\tvr_\eps\tvu_\eps\|_{L^\infty(0,T;L^2(D))}\leq \|\sqrt{\tvr_\eps}\|_{L^\infty(0,T;L^2(D))} \|\sqrt{\tvr_\eps}\tvu_\eps\|_{L^\infty(0,T;L^2(D))}\leq C.
\end{align*}
Moreover, by \eqref{approx_cont}, we have
\begin{align*}
    \del_t \tvr_\eps \text{ bounded in } L^2(0,T;W^{-1,2}(D)).
\end{align*}
Applying \cite[Lemma~5.1]{Lions1998} now shows
\begin{align*}
    \tvr_\eps\tvu_\eps \to \rho\vc u \text{ in } \mathcal{D}'((0,T)\times D).
\end{align*}
Furthermore, an Aubin-Lions-Simon type argument yields for any $q \in [1,\infty)$
\begin{align*}
    \tvr_\eps \to \rho \text{ in } C_w(0,T;L^q(D)),\quad \tvr_\eps \tvu_\eps \to \rho \vc u \text{ in } C_w(0,T;L^2(D)).
\end{align*}
A similar argument applies to $\tvr_\eps\tvu_\eps\otimes\tvu_\eps$, which is needed in the limit passage in the momentum equation below.
As a consequence of \cite[Lemma~6.9]{Novotny2004}, the couples $(\tvr_\eps, \tvu_\eps)$ as well as $(\tvr, \tvu)$ also fulfil the renormalized continuity equation in $\Test'((0,T) \times \R^3)$, that is, the equations given in \eqref{renCE}.


\subsection{Limit in the momentum equation}
By the arguments from Section~\ref{convCE} and the strong convergence of $\Phi_\eps$ from Lemma~\ref{lemPhi} and \ref{lemPhi2}, we find\footnote{The fact that $\tvr_\eps \tvu_\eps$ needs $\Phi_\eps^\top$, but $\tvr_\eps \tvu_\eps \otimes \tvu_\eps$ sees $\Phi_\eps$, follows from the expansion of gradients, see \eqref{expanGrad} below.}
\begin{align*}
    \begin{split}
        \Phi_\eps^\top \tvr_\eps \tvu_\eps &\to \rho \vc u \text{ in } C_w(0,T;L^r(D)), \quad r< 2,\\
        \Phi_\eps \tvr_\eps \tvu_\eps \otimes \tvu_\eps &\to \rho \vc u \otimes \vc u \text{ in } \mathcal{D}'((0,T)\times D).
    \end{split}
\end{align*}

For $\phi \in C_c^\infty([0,T)\times D)$, we use $\Phi_\eps \phi \in [W_0^{1,q} \cap L^\infty]([0,T) \times D_\eps)$ as a proper test function in the momentum equation \eqref{NSE}$_2$. Since $\Phi_\eps = 0$ on the holes, we can prolong all functions by zero to the whole of $D$ and obtain
\begin{align*}
    0&=\int_D \tilde{\vc m}_{\eps, 0}\cdot  \Phi_\eps \phi(0,\cdot) \dd x + \int_0^T\int_D \tvr_\eps\tvu_\eps \cdot \Phi_\eps \del_t\phi \dd x \dd t + \int_0^T\int_D \tvr_\eps \tvu_\eps \otimes \tvu_\eps : \nabla (\Phi_\eps \phi) \dd x \dd t\\
    &\quad + \int_0^T\int_D p(\tvr_\eps) \div(\Phi_\eps \phi) \dd x \dd t - \int_0^T \int_D \mathbb{S}(\nabla\tvu_\eps):\nabla(\Phi_\eps \phi) \dd x \dd t + \int_0^T\int_D \tvr_\eps \vc f\cdot\Phi_\eps \phi \dd x \dd t\\
    &=: \sum_{j=1}^6 I_j.
\end{align*}

Since $\tvr_\eps \weak^\ast \rho$ weakly-($\ast$) in $L^\infty(0,T; L^\infty(D))$, in particular in $L^\infty(0,T;L^p(D))$ for any $p \in [1,\infty]$, we follow the same arguments as in \cite{OschmannPokorny2023, NecasovaOschmann2023} to easily pass to the limits in each $I_j$ except $I_4$, giving the same restrictions on the holes' size $\alpha > d$ when, formally, setting $\gamma=\infty$ in the above references. We therefore focus just on the pressure term $I_4$, which is different in our case. However, since $\div \Phi_\eps = 0$, the pressure integral reads
\begin{align*}
    \int_0^T \int_D p(\tvr_\eps) \div (\Phi_\eps \phi) \dd x \dd t &= \int_0^T \int_D p(\tvr_\eps) \Phi_\eps:\nabla \phi \dd x \dd t \\
    &= \int_0^T \int_D p(\tvr_\eps) \mathbb{I} : \nabla \phi \dd x \dd t + \int_0^T \int_D p(\tvr_\eps) (\Phi_\eps - \mathbb{I}) : \nabla \phi \dd x \dd t.
\end{align*}
Since $p(\tvr_\eps) \weak \overline{p(\rho)}$ weakly in $L^1((0,T) \times D)$, we have
\begin{align*}
    \int_0^T \int_D p(\tvr_\eps) \mathbb{I} : \nabla \phi \dd x \dd t \to \int_0^T \int_D \overline{p(\rho)} \mathbb{I} : \nabla \phi \dd x \dd t = \int_0^T \int_D \overline{p(\rho)} \div \phi \dd x \dd t.
\end{align*}
The remaining part converges to zero thanks to Lemma~\ref{imprPress} and \ref{lem:equiConv}, yielding in total
\begin{align*}
    \int_0^T \int_D p(\tvr_\eps) \Phi_\eps : \nabla \phi \dd x \dd t \to \int_0^T \int_D \overline{p(\rho)} \div \phi \dd x \dd t.
\end{align*}
Therefore, for $\phi\in C_c^\infty([0,T)\times D; \R^d)$ we obtain 
\begin{align}\label{wkMom-lim}
    \begin{split}
        &\int_0^T\int_{D} \rho \vu \cdot \del_t \phi \dd x \dd t + \int_0^T\int_{D} \rho \vu\otimes \vu : \nabla \phi \dd x \dd t + \int_0^T\int_{D} \overline{p(\rho)} \div \phi \dd x \dd t \\
        &- \int_0^T\int_{D} \ms(\nabla \vu):\nabla \phi \dd x \dd t + \int_0^T\int_{D} \rho \vc f \cdot \phi \dd x \dd t = -\int_{D} \vc m_0 \cdot \phi(0,\cdot) \dd x .
    \end{split}
\end{align}
Once we showed that $\overline{p(\rho)} = p(\rho)$, this finishes the proof of Theorem~\ref{thm1}. The proof of this last identity is carried out in the next section.


\section{Strong convergence of the density}\label{sec:strDens}

Let us first expand the expressions $\div(\alpha \mathbb{A} {\vc g})$ and $\nabla(\alpha \mathbb{A} {\vc g})$, where
\begin{align*}
\alpha(x) \in \mathbb{R}, && {\vc g}(x):=\lr{g_i}_{i=1}^{d}  \in \mathbb{R}^{d}, && \mathbb{A}(x):=\lr{A_{ij}}_{i,j=1}^{d} \in \mathbb{R}^{d\times d} .
\end{align*}
Then, using Einstein summation convention and our definition of matrix-divergence and vector-gradient, the term $\div(\alpha \mathbb{A} {\vc g})$ will be expanded as
\begin{align}
	\div(\alpha \mathbb{A} {\vc g})
	&= \del_i (\alpha A_{ij} g_j) \notag \\
	&= \del_i \alpha A_{ij} g_j + \alpha \del_i A_{ij} g_j + \alpha A_{ij} \del_i g_j \notag \\
    &= (\nabla \alpha \otimes \vc g) : \mathbb{A} + \alpha (\div \mathbb{A}) \cdot \vc g + \alpha \mathbb{A} : \nabla \vc g . \label{expanDiv}
\end{align}
Similarly, the full gradient expands as
\begin{align}
	\nabla(\alpha \mathbb{A} {\vc g})
	&= \lr{ \del_i (\alpha A_{jk} g_k) }_{ij} \notag \\
    &= \lr{ \del_i \alpha A_{jk} g_k + \alpha \del_i A_{jk} g_k + \alpha A_{jk} \del_i g_k }_{ij} \notag \\
    &= (\nabla \alpha \otimes \vc g) \mathbb{A}^\top + \alpha \vc g \del \mathbb{A}^\top + \alpha (\nabla \vc g) \mathbb{A}^\top , \label{expanGrad}
\end{align}

\noindent
where we define
\begin{align}\label{defVecDer}
\lr{ \vc g \del \mathbb{A}^\top }_{ij} := g_k \del_i A_{jk} 
\end{align}

The general idea to prove the strong convergence of the density is nowadays standard and has been explained in several articles (see, e.g., \cite{Novotny2004, LuSchwarzacher2018}). It relies on the compactness of the so-called effective viscous flux. More generally, the Lions identity usually provides a way to directly apply Lions’ result on the weak continuity of the effective viscous pressure to obtain, in weak form,
\begin{align}\label{evf-formal}
	\overline{\rho p(\rho)}- \lr{\frac{2(d-1)}{d} \mu+ \mu_1 } \overline{\rho \div \vu} = \rho	\, \overline{ p(\rho)}- \lr{\frac{2(d-1)}{d} \mu+ \mu_1 } {\rho \div \vu}.  
\end{align}  
Here, $\overline{\rho p(\rho)}$ denotes the weak limit of $\tvr_\eps p(\tvr_\eps)$ in $L^1 L^1$. In our case, the equi-integrability of the term $\tvr_\eps p(\tvr_\eps)$ follows from the observation already made in \eqref{eqiint-pr} and the fact that $\tvr_\eps \leq \ovr$; indeed, since also $s |\log s| \leq 1 + s^2$, we have
\begin{align}\label{equiintRhoP}
	|\tvr_\eps p(\tvr_\eps) \log \lr{\tvr_\eps p(\tvr_\eps)}| \lesssim \tvr_\eps p(\tvr_\eps) |\log \tvr_\eps| + \tvr_\eps p(\tvr_\eps) |\log p(\tvr_\eps) | \lesssim p(\tvr_\eps) + p(\tvr_\eps) |\log p(\tvr_\eps)| ,
\end{align}
and the latter is uniformly bounded in $L^1 L^1$ thanks to Lemma~\ref{imprPress}.\\

From the renormalized continuity equations fulfilled by $(\rhoe, \vu_\eps)$ and $(\rho, \vu)$, we have 
\begin{align}\label{sc-dens1}
    \partial_t (\tvr_\eps \log \tvr_\eps )+ \div \lr{\tvr_\eps \log (\tvr_\eps) \tvu_\eps}+ \rhoe \div \tvu_\eps =0 \text{ in } \mathcal{D}^{\prime}((0,T)\times D)
\end{align}
and 
\begin{align}\label{sc-dens2}
     \partial_t ( \rho \log \rho )+ \div \lr{ \rho \log (\rho) \vu}+ \rho \div \vu =0 \text{ in } \mathcal{D}^{\prime}((0,T)\times D).
\end{align}
Letting $\eps \rightarrow 0$ in \eqref{sc-dens1} and subtracting \eqref{sc-dens2}, it holds that
\begin{align}\label{sc-dens3}
    \partial_t (\overline{\rho \log \rho}- \rho \log \rho )+ \div \lr{ \overline{\rho \log (\rho) \vu}- \rho \log (\rho) \vu}+ \lr{\overline{\rho \div \vu}-\rho \div \vu} =0 \text{ in } \mathcal{D}^{\prime}((0,T)\times D).
\end{align}
Then, by employing a sequence of test functions $\{\varphi_n(t)\}_{n \in \N} \subset C_c^\infty(0,T)$ that converges strongly to $1$ in $L^2(0,T)$ in the weak formulation of \eqref{sc-dens3}, we obtain, for a.e.~$\tau \in (0,T]$,
\begin{align*}
	\int_D \lr{\overline{\rho \log \rho}- \rho \log \rho}(\tau) \dd x + \int_0^\tau \int_D \lr{ \overline{\rho \div \vu} - \rho \div \vu } \dd x \dd t = 0 .
\end{align*}

We conclude the proof by establishing the identity \eqref{evf-formal}, which together with the monotonicity of $p$ gives  
\begin{align*}
	\lr{\frac{2(d-1)}{d} \mu+ \mu_1 } \int_0^\tau \int_D  \lr{ \overline{\rho \div \vu} - \rho \div \vu}  \dd x\dd t = \int_0^\tau \int_D \lr{ \overline{\rho p( \rho)}- \rho \, \overline{p( \rho)} } \dd x \dd t \geq 0. 
\end{align*}  

Finally, the strict convexity of the function $s \mapsto s \log s$ on $[0,\infty)$ ensures the strong convergence of the density.

To make things rigorous, we will prove the following lemma: 
\begin{lemma}\label{evf-lem}
    There holds the identity 
    \begin{align}\label{evf-id-weak}
        \begin{split}
        \int_0^T  \int_D \psi(t)\zeta(x) \lr{ \overline{\rho p(\rho)} - \lr{\frac{2(d-1)}{d} \mu+ \mu_1 } \overline{\rho \div \vu} }  \dd x  \dd t
        \\ =
        \int_0^T \int_D \psi(t) \zeta(x) \lr{ \rho \, \overline{p(\rho)} - \lr{\frac{2(d-1)}{d} \mu+ \mu_1 } \rho  \div \vu }  \dd x  \dd t,
        \end{split}
    \end{align}
    where 
    $\psi \in C^\infty_c (0,T)$ and $\zeta \in C_c^\infty(D)$.
\end{lemma}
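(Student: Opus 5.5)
We prove \eqref{evf-id-weak} with the classical effective viscous flux argument, adapted to the perforated domain through the cut-off functions $\Phi_\eps$ of Lemma~\ref{lemPhi} and Lemma~\ref{lemPhi2}. Throughout, let $\nabla\Delta^{-1}$ denote the inverse-divergence operator on $\R^d$, defined via the Fourier symbol $-i\xi/|\xi|^2$, and let $\mathcal R_{ij} = \del_i\del_j\Delta^{-1}$ be the Riesz transforms.

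\textbf{Test functions.} For $\eps>0$ we test the approximate momentum equation, extended by zero to $D$ as in the previous section, with
\begin{align*}
\phi_\eps = \psi(t)\,\zeta(x)\,\Phi_\eps\, \nabla\Delta^{-1}[\mathbf 1_D \tvr_\eps].
\end{align*}
This function vanishes on the holes and belongs to $W_0^{1,q}$ for every finite $q$, since $\tvr_\eps\in L^\infty$. For the limit problem \eqref{wkMom-lim} we use the corresponding function without $\Phi_\eps$, namely $\psi\zeta\nabla\Delta^{-1}[\rho]$.

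\textbf{Expanding the pressure and viscous terms.} Using \eqref{expanDiv}, \eqref{expanGrad} and $\div\Phi_\eps=0$, the pressure term becomes
\begin{align*}
\int\!\!\int \psi\, p(\tvr_\eps)\bigl[\zeta\,\Phi_\eps:\nabla\nabla\Delta^{-1}\tvr_\eps + (\nabla\zeta\otimes\nabla\Delta^{-1}\tvr_\eps):\Phi_\eps\bigr].
\end{align*}
We replace $\Phi_\eps$ by $\mathbb I$ in both pieces. Since $\nabla\nabla\Delta^{-1}\tvr_\eps$ is bounded only in $L^q$ for $q<\infty$ and lies in BMO, not $L^\infty$, the error is $p(\tvr_\eps)(\Phi_\eps-\mathbb I):\mathcal R[\tvr_\eps]$. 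We control it through the duality $L\log L$--$\exp L$: $\mathcal R\tvr_\eps$ is uniformly bounded in BMO, hence in $\exp L$ locally (John--Nirenberg), while $p(\tvr_\eps)$ is bounded in $L(\log L)^{1+\kappa}$ by Lemma~\ref{imprPress}. The small measure of the support of $\Phi_\eps-\mathbb I$ then makes the product vanish, since the extra $\log^\kappa$ factor absorbs the exponential integrability. We will establish this via the Orlicz/BMO facts of Appendix~\ref{sec:AppB}; I expect this step to be the main obstacle.

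Once $\Phi_\eps$ is replaced by $\mathbb I$, the term $\mathbf I:\nabla\nabla\Delta^{-1}\tvr_\eps=\tvr_\eps$ yields $\psi\zeta\, p(\tvr_\eps)\tvr_\eps$. The viscous term is handled in the same way. The error terms involving $\nabla\Phi_\eps$ vanish because $\|\nabla\Phi_\eps\|_{L^q}\to0$ for suitable $q\le2$ (here $\alpha>d$ is used), while $\nabla\Delta^{-1}\tvr_\eps$ is bounded in $L^\infty$. This gives $\bigl(\tfrac{2(d-1)}{d}\mu+\mu_1\bigr)\psi\zeta\,\tvr_\eps\div\tvu_\eps$ plus compact terms, after integrating by parts with $\nabla\Delta^{-1}$, which commutes with derivatives.

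\textbf{Remaining terms.} The time-derivative term is rewritten using the continuity equation \eqref{approx_cont}: $\del_t\nabla\Delta^{-1}\tvr_\eps=-\nabla\Delta^{-1}\div(\tvr_\eps\tvu_\eps)$. The force and lower-order terms, including those containing $\nabla\zeta$, converge by the compact embedding of $\nabla\Delta^{-1}\tvr_\eps$ into $C([0,T];L^q)$, which follows from Aubin--Lions together with the strong convergence $\Phi_\eps\to\mathbb I$ in $L^q$. The convective terms combine into
\begin{align*}
\int\!\!\int \psi\zeta\, \tvu_\eps\cdot\bigl[\tvr_\eps\,\mathcal R(\tvr_\eps\tvu_\eps) - \tvr_\eps\tvu_\eps\,\mathcal R(\tvr_\eps)\bigr],
\end{align*}
with additional $\Phi_\eps-\mathbb I$ corrections that are treated by Lemma~\ref{lem:equiConv}, since everything there is bounded. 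The div--curl commutator lemma, as in Lions/Feireisl (for instance via \cite[Lemma~5.1]{Lions1998}), gives convergence of this bracket to its counterpart built from $\rho,\ \rho\vu$, using $\tvr_\eps\to\rho$ in $C_w(L^q)$ and $\tvr_\eps\tvu_\eps\to\rho\vu$ in $C_w(L^2)$.

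\textbf{Conclusion.} We then perform the same computation on the limit equation \eqref{wkMom-lim}, where the pressure term produces $\rho\,\overline{p(\rho)}$. Subtracting the two limiting identities cancels every term except those in \eqref{evf-id-weak}. The weak limit $\overline{\rho p(\rho)}$ exists by the equi-integrability \eqref{equiintRhoP}, which completes the proof.
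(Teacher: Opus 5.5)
Your proof follows the paper's architecture: the same test functions $\psi\zeta\Phi_\eps\nabla\Delta^{-1}[\mathbf 1_D\tvr_\eps]$ and $\psi\zeta\nabla\Delta^{-1}[\mathbf 1_D\rho]$, the splitting $\Phi_\eps=\mathbb I+(\Phi_\eps-\mathbb I)$, Aubin--Lions for $\nabla\Delta^{-1}[\mathbf 1_D\tvr_\eps]$, the commutator lemma for the convective terms, and comparison with the limit equation. You handle the crux, $\mathcal T_2\to0$, differently.

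The paper proves equi-integrability of $f_\eps=p(\tvr_\eps)g_\eps$, with $g_\eps:=\nabla^2\Delta^{-1}[\mathbf 1_D\tvr_\eps]$, through maximal functions. It uses a Fefferman--Stein bound on each time slice, the Stein-type $L\log L$ estimate of Proposition~\ref{prop:Maxfcts}, and Jensen in time, and then invokes Lemma~\ref{lem:equiConv}.

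Your route, Orlicz duality plus small support, is valid and more elementary, but you only announce it. It closes in a few lines. Young's inequality for the complementary pair $e^b-1-b$, $(1+a)\log(1+a)-a$ gives $ab\le c^{-1}\bigl(e^{cb}+a\log(1+a)+a\bigr)$ for $a,b\ge0$ and $c>0$. Hence for every measurable $A\subset(0,T)\times D$,
\[
\iint_A p(\tvr_\eps)|g_\eps|\dd x\dd t\le\frac1c\Bigl(|A|^{1/2}\bigl\|e^{c|g_\eps|}\bigr\|_{L^2((0,T)\times D)}+\iint_A p(\tvr_\eps)\bigl(1+\log(1+p(\tvr_\eps))\bigr)\dd x\dd t\Bigr).
\]

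Two points you glossed over must be made explicit:
\begin{enumerate}
\item The bound $\sup_{t,\eps}\int_D e^{2c|g_\eps(t)|}\dd x<\infty$ for small $c>0$ does not follow from $\|g_\eps(t)\|_{\BMO}\lesssim\ovr$ alone, because $\BMO$ ignores constants. John--Nirenberg only controls $g_\eps(t)-(g_\eps(t))_Q$ on a cube $Q\supset D$. You must add $|(g_\eps(t))_Q|\lesssim\|\tvr_\eps(t)\|_{L^2}\lesssim1$, which follows from the $L^2$-boundedness of the Riesz transforms. The paper's inequality $\int|fg|\lesssim\int Mf\,M^\sharp g$ needs the same correction, since $M^\sharp$ vanishes on constants.
\item The uniform smallness of $\iint_A p(\tvr_\eps)\bigl(1+\log(1+p(\tvr_\eps))\bigr)$ as $|A|\to0$ must come from the space-time bound on $p(\tvr_\eps)|\log p(\tvr_\eps)|^{1+\kappa}$ in Lemma~\ref{imprPress}, via de la Vall\'ee-Poussin. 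This is exactly where $\kappa>0$ enters; with $\kappa=0$ it fails, cf.\ Appendix~\ref{lologl-bmo-ce}.
\end{enumerate}
Taking $A=(0,T)\times\bigcup_i B_{\eps^\sigma}(x_i(\eps))$, with $\sigma$ as in the proof of Lemma~\ref{lem:equiConv}, then gives $\mathcal T_2\to0$ directly, using $\|\Phi_\eps\|_{L^\infty}\lesssim1$.

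Your route yields the same equi-integrability of $f_\eps$ on arbitrary space-time sets without maximal functions or the slice-wise Jensen step. The paper's route uses heavier harmonic analysis but packages the estimate as a general maximal-function bound.
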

To prove Lemma~\ref{evf-lem}, the key idea is to use in the momentum equation \eqref{wkMom} the test function  
\begin{align}\label{phieps}
    \phi_\eps(t,x)= \psi(t)  \zeta(x)  \Phi_\eps  \nabla \Delta^{-1}[\mathbf{1}_{D} \tilde \varrho_\eps],
\end{align}  
and in the limit equation \eqref{wkMom-lim} the corresponding test function  
\begin{align*}
    \phi(t,x)= \psi(t)  \zeta(x)  \nabla \Delta^{-1}[\mathbf{1}_{D} \varrho],
\end{align*}  
where both $\mathbf u_\eps$ and $\mathbf u$ are extended by zero outside $D$ to the whole of $\mathbb{R}^d$, $\psi$ and $\zeta$ belong to the class mentioned in Lemma~\ref{evf-lem}, $\Phi_\eps$ is as in Lemma~\ref{lemPhi} for $d=3$ (or, equivalently, Lemma~\ref{lemPhi2} for $d=2$), and $\Delta^{-1}$ is the Fourier multiplier with symbol $-|\xi|^2$.


\subsection{Justification of use of test functions}
Let us explain why we are allowed to use \eqref{phieps} as test function in \eqref{wkMom}. First, from the properties of the Riesz operator $\nabla^2 \Delta^{-1}$, we find for any $q \in [1,\infty)$
\begin{align*}
    \|\nabla \Delta^{-1} [\mathbf{1}_D \tvr_\eps]\|_{L^\infty W^{1,q}} \lesssim \|\tvr_\eps\|_{L^\infty L^q} \lesssim 1,
\end{align*}
in particular, $\nabla \Delta^{-1}[\mathbf{1}_D \tvr_\eps] \in L^\infty L^\infty$ uniformly in $\eps$. This together with the uniform bound on $\Phi_\eps$ in $L^\infty$ leads to
\begin{align}\label{bdsphi}
    \|(\nabla \Delta^{-1}[\mathbf{1}_D \tvr_\eps]) \del \Phi_\eps\|_{L^\infty L^2} + \|\Phi_\eps \nabla^2 \Delta^{-1}[\mathbf{1}_D \tvr_\eps]\|_{L^\infty L^q} \lesssim 1
\end{align}
for any finite $q$. \par 

From the momentum equation \eqref{wkMom}, the continuity equation \eqref{renCE}, and the fact that $\div \Phi_\eps = 0$, together with \eqref{expanDiv}--\eqref{defVecDer}, we take $\phi_\eps(t,x) = \psi(t) \zeta(x) \Phi_\eps \nabla \Delta^{-1} [\mathbf{1}_D \tvr_\eps]$ as test function to obtain
    \begin{align}\label{conv1bd}
    \sum_{i=1}^{11} \mathcal{K}_i :=& \int_0^T \partial_t \psi(t) \lr{\int_D \tvr_\eps\tvu_\eps \cdot \zeta(x) \Phi_\eps \nabla \Delta^{-1}[\mathbf{1}_{D} \tvr_\eps]\dd x }\dd t \nonumber \\
    &-  \int_0^T  \psi(t) \lr{\int_D \tvr_\eps\tvu_\eps \cdot \zeta(x) \Phi_\eps  \nabla \Delta^{-1} \div (\tvr_\eps \tvu_\eps) \dd x }\dd t \nonumber\\ 
    &+ \int_0^T \psi(t) \lr{ \int_D \tvr_\eps \tvu_\eps \otimes \tvu_\eps : \lr{ \nabla \zeta(x) \otimes \nabla \Delta^{-1}[\mathbf{1}_{D} \tvr_\eps]  }\Phi_\eps^\top  \dd x} \dd t\nonumber\\
    &+ \int_0^T \psi(t) \lr{\int_D \tvr_\eps \tvu_\eps \otimes \tvu_\eps :  \zeta(x) \lr{ \nabla \Delta^{-1}[\mathbf{1}_{D} \tvr_\eps] } \del \Phi_\eps^\top \dd x }\dd t\nonumber\\ 
    &+ \int_0^T \psi(t) \lr{\int_D \tvr_\eps \tvu_\eps \otimes \tvu_\eps : \zeta(x) (\nabla^2 \Delta^{-1}[\mathbf{1}_{D} \tvr_\eps]) \Phi_\eps^\top \dd x} \dd t \nonumber\\
    &+\int_0^T \psi(t) \lr{ \int_D p(\tvr_\eps) \Phi_\eps : \nabla \zeta(x) \otimes \nabla \Delta^{-1}[\mathbf{1}_{D} \tvr_\eps] \dd x }\dd t \nonumber\\
    &+\int_0^T \psi(t) \lr{ \int_D p(\tvr_\eps) \zeta(x) \Phi_\eps :  \nabla^2 \Delta^{-1}[\mathbf{1}_{D} \tvr_\eps] \dd x }\dd t \nonumber\\
    &-\int_0^T \psi(t) \lr{ \int_D \mathbb{S}(\nabla\tvu_\eps) : \lr{ \nabla \zeta(x) \otimes \nabla \Delta^{-1}[\mathbf{1}_{D} \tvr_\eps]  }\Phi_\eps^\top  \dd x} \dd t \nonumber\\
    &- \int_0^T \psi(t) \lr{\int_D \mathbb{S}(\nabla\tvu_\eps):  \zeta(x) \lr{ \nabla \Delta^{-1}[\mathbf{1}_{D} \tvr_\eps] } \del \Phi_\eps^\top \dd x }\dd t \nonumber\\ 
    &- \int_0^T \psi(t) \lr{\int_D \mathbb{S}(\nabla\tvu_\eps) : \zeta(x) \lr{\nabla^2 \Delta^{-1}[\mathbf{1}_{D} \tvr_\eps] } \Phi_\eps^\top \dd x} \dd t \nonumber \\
    &- \int_0^T\psi(t) \lr{\int_D \tvr_\eps \vc f\cdot  \zeta(x) \Phi_\eps \nabla \Delta^{-1}[\mathbf{1}_{D} \tvr_\eps] \dd x }\dd t =0.
\end{align}
First, we need to check that all the terms in \eqref{conv1bd} are well defined. The core difficulty is to check whether the pressure term 
\begin{align}\label{jednak}
    \int_0^T \psi(t) \lr{ \int_D p(\tvr_\eps) \zeta(x) \Phi_\eps :  \nabla^2 \Delta^{-1}[\mathbf{1}_{D} \tvr_\eps] \dd x }\dd t
\end{align}
is well defined since $p(\tvr_\eps)$ is just uniformly bounded in $L^1(0,T;L (\log L)^{1+\kappa}(D))$. \par 

The functional framework developed in Appendix~\ref{sec:AppB} is essential to obtain a bound for the above-mentioned term. At first we observe that 
\begin{align*}
    \int_0^T \psi(t) \int_D p(\tvr_\eps) \zeta(x) \Phi_\eps : \nabla^2 \Delta^{-1}[\mathbf{1}_{D} \tvr_\eps] \dd x \dd t 
    &= \int_0^T \psi(t) \int_{\mathbb{R}^n} \mathbf{1}_{D} p(\tvr_\eps) \zeta(x) \Phi_\eps : \nabla^2 \Delta^{-1}[\mathbf{1}_{D} \tvr_\eps] \dd x \dd t .
\end{align*}
Applying the relations \eqref{loglexpl}--\eqref{time_BMO_exp}, we justify the well-definedness of this integral as follows:

\begin{itemize}[leftmargin=*]
    \item Since the density sequence $\{\tvr_\eps\}_{\eps \in (0,1)}$ is bounded in $L^\infty(0,T; L^\infty(D))$, the singular integral operator $\nabla^2 \Delta^{-1}$ applied to $\mathbf{1}_{D}\tvr_\eps$ satisfies
\begin{align*}
    \nabla^2 \Delta^{-1}[\mathbf{1}_{D}\tvr_\eps] \in L^\infty_{w-(\ast)}(0,T; \BMO(\mathbb{R}^d)).
\end{align*}
Recalling the embedding for the exponential Orlicz scale,
\begin{equation*}
    L^\infty_{w-(\ast)}(0,T; \BMO(\mathbb{R}^d)) \hookrightarrow L^\infty_{w-(\ast)}(0,T; \exp L^{\frac{1}{1+\kappa}}(\mathbb{R}^d)),
\end{equation*}
it follows that $\nabla^2 \Delta^{-1}[\mathbf{1}_{D}\tvr_\eps]$ is uniformly bounded in $L^\infty_{w-(\ast)}(0,T; \exp L^{\frac{1}{1+\kappa}}(\mathbb{R}^d))$.
    \item Given $p(\tvr_\eps) \in L(\log L)^{1+\kappa}((0,T) \times D) \hookrightarrow L^1(0,T; L(\log L)^{1+\kappa}(D))$, the spatial integrand is realized via the duality pairing:
    \begin{align*}
      \int_{\R^d} p(\tvr_\eps) \Phi_{\eps}\zeta(x) : \nabla^2 \Delta^{-1} [\mathbf{1}_D \tvr_\eps] \dd x = \left\langle p(\tvr_\eps)\zeta(x), \Phi_{\eps} : \nabla^2 \Delta^{-1} [\mathbf{1}_D \tvr_\eps] \right\rangle_{L(\log L)^{1+\kappa}(\mathbb{R}^d) , \exp L^{\frac{1}{1+\kappa}}(\mathbb{R}^d)}.
    \end{align*}
    Note that while the pressure $p(\tvr_\eps)$ is supported on $\overline{D}$, the singular integral operator $\nabla^2 \Delta^{-1}$ is non-local; hence, the duality is rigorously defined over $\mathbb{R}^d$ using the extension $\mathbf{1}_D \tvr_\eps(t) \in L^\infty(\mathbb{R}^d)$.

    \item The \textit{separability} of the Orlicz space $L(\log L)^{1+\kappa}(\mathbb{R}^d)$ ensures that the duality mapping 
    \begin{equation*}
        t \mapsto \psi(t)\left\langle p(\tvr_\eps(t, \cdot))\zeta(\cdot), \Phi_\eps : \nabla^2 \Delta^{-1} [\mathbf{1}_D \tvr_\eps(t, \cdot)] \right\rangle_{L(\log L)^{1+\kappa}(\mathbb{R}^d) , \exp L^{\frac{1}{1+\kappa}}(\mathbb{R}^d)}
    \end{equation*}
    is a measurable function of time. \par  Since $p(\tvr_\eps) \in L^1(0,T; L(\log L)^{1+\kappa})$ and $\nabla^2 \Delta^{-1}[\mathbf{1}_D \tvr_\eps] \in L^\infty(0,T; \exp L^{\frac{1}{1+\kappa}})$, the integral is a well-defined element of $L^1(0,T)$. Thus, the term \eqref{jednak} is realized as
    \begin{align*}
        &\int_0^T \psi(t) \int_{\mathbb{R}^d}  p(\tvr_\eps) \zeta(x) \Phi_\eps : \nabla^2 \Delta^{-1}[\mathbf{1}_{D} \tvr_\eps] \dd x \dd t \\
        &\quad = \left\langle \psi p(\tvr_\eps) \zeta, \Phi_\eps : \nabla^2 \Delta^{-1} [\mathbf{1}_D \tvr_\eps] \right\rangle_{L^1(0,T; L(\log L)^{1+\kappa}) , L^\infty_{w-(\ast)}(0,T; \exp L^{\frac{1}{1+\kappa}})}.
    \end{align*}
\end{itemize}
\textbf{Remark.} Note that $\kappa=0$ is sufficient to ensure the well-definedness of the pressure term. In this baseline case, the pressure $p(\rhoe) \in L^1(0,T; L \log L(D))$ pairs directly with the Riesz operator in $L^\infty_{w-(\ast)}(0,T; \BMO(D))$. Since the latter embeds continuously into $L^\infty_{w-(\ast)}(0,T; \exp L(D))$, the duality pairing is rigorously justified. In the next section, we rigorously justify the limit passage in this term, where $\kappa>0$ will play a crucial role.


\subsection{Limit passage}
We follow \cite[Section~3.4.2]{LuSchwarzacher2018} to derive the convergence of $ \phi_\eps \lr{= \psi(t)  \zeta(x)  \Phi_\eps  \nabla \Delta^{-1}[\mathbf{1}_{D} \tilde \varrho_\eps]}$ by suitably adapting the arguments for our regularity of $\lr{\tvr_\eps,\tvu_\eps}$. At first, using the continuity equation 
\begin{equation*}
    \partial_t \tvr_\eps + \div(\tvr_\eps \vu_\eps) = 0 \quad \text{in } \mathcal{D}^\prime((0,T)\times \mathbb{R}^d),
\end{equation*}
along with the uniform velocity in $L^2(0,T;W^{1,2}_0(D))$ and density in $\tvr_\eps \in L^\infty(0,T; L^\infty(D))$, we have  $\{\tvr_\eps \vu_\eps\}_{\eps \in (0,1)}$ is uniformly bounded in $L^2(0,T; L^6(D))$. Thus, the sequence of time derivatives $\{\partial_t \tvr_\eps\}_{\eps \in (0,1)}$ is uniformly bounded in $L^2(0,T; W^{-1,6}(D))$. By the fact that $\nabla \Delta^{-1}$ maps $W^{-1,6}(D)$ into $L^6(D)$,  we deduce
\begin{align}\label{lpevf1}
    \left\| \partial_t \nabla \Delta^{-1} [\mathbf{1}_D \tilde \varrho_\eps] \right\|_{L^2(0,T; L^6(D))} &\le C \|\tilde \varrho_\eps\|_{L^\infty(0,T; L^\infty(D))} \|\tilde \vu_\eps\|_{L^2(0,T; W_0^{1,2}(D))} \le C.
\end{align}

By virtue of the uniform spatial bounds on the density and the temporal control provided by \eqref{lpevf1}, we can invoke the Aubin--Lions--Simon Compactness Theorem. This establishes that the family $\left\{ \nabla \Delta^{-1} [\mathbf{1}_D \tilde \varrho_\eps] \right\}_{\eps \in (0,1)}$ is strongly precompact in $C_w(0, T; L^r(D;\mathbb{R}^d))$ for any $1<r<\infty$.

We now establish the strong convergence of the full test function sequence 
\begin{equation*}
    \phi_\eps = \psi(t) \zeta(x) \Phi_\eps \nabla \Delta^{-1}[\mathbf{1}_{D} \tilde \varrho_\eps]
\end{equation*}
by analyzing the structural profiles of the cutting matrices in both dimensions. To this end, we decompose the convergence error as
\begin{align*}
    \phi_\eps - \psi \zeta \nabla \Delta^{-1} [\mathbf{1}_D \rho] = \psi \zeta \Phi_\eps \left( \nabla \Delta^{-1} [\mathbf{1}_D \tilde \varrho_\eps] - \nabla \Delta^{-1} [\mathbf{1}_D \rho] \right) + \psi \zeta (\Phi_\eps - \mathbb{I}) \nabla \Delta^{-1} [\mathbf{1}_D \rho].
\end{align*}

Thanks to the strong convergences of $\nabla \Delta^{-1} [\mathbf{1}_D \tvr_\eps]$ and $\Phi_\eps$ (see Lemma~\ref{lemPhi} and \ref{lemPhi2}), we readily see that
\begin{equation*}
    \phi_\eps \to \psi(t) \zeta(x) \nabla \Delta^{-1} [\mathbf{1}_D \rho] \quad \text{strongly in } C_w(0,T; L^r(D)),
\end{equation*}
for any $1 < r < \infty$.\\

All the terms $\mathcal{K}_i$ in \eqref{conv1bd} except of $\mathcal{K}_6$ and $\mathcal{K}_7$ can be handled in a classical way once we split $\Phi_\eps = (\Phi_\eps - \mathbb{I}) + \mathbb{I}$. The identity part is handled as in \cite{LuSchwarzacher2018}, whereas the difference part converges to zero thanks to the estimates on $(\Phi_\eps - \mathbb{I})$ from Lemma~\ref{lemPhi} and \ref{lemPhi2}, and the bounds obtained in \eqref{bdsphi}.\\
    
    The most technical point for us is the terms $\mathcal{K}_6, \mathcal{K}_7$ containing the pressure, which differs in our case and thus needs special care. First, for $\mathcal{K}_{6}$, using equi-integrability and similar arguments as before, we rewrite
    \begin{align*}
        &\int_0^T \psi(t) \lr{ \int_D p(\tvr_\eps) \Phi_\eps : \nabla \zeta(x) \otimes \nabla \Delta^{-1}[\mathbf{1}_{D} \tvr_\eps] \dd x }\dd t \\
        &= \int_0^T \psi(t) \lr{ \int_D p(\tvr_\eps) \mathbb{I} : \nabla \zeta(x) \otimes \nabla \Delta^{-1}[\mathbf{1}_{D} \tvr_\eps] \dd x }\dd t \\
        &\qquad + \int_0^T \psi(t) \lr{ \int_D p(\tvr_\eps) (\Phi_\eps - \mathbb{I}) : \nabla \zeta(x) \otimes \nabla \Delta^{-1}[\mathbf{1}_{D} \tvr_\eps] \dd x }\dd t 
    \end{align*}
and see that the latter integral converges to zero thanks to Lemma~\ref{lem:equiConv}. Similarly for $\mathcal{K}_{7}$, we rewrite
    \begin{align*}
        &\int_0^T \psi(t) \lr{ \int_D p(\tvr_\eps) \zeta(x) \Phi_\eps :  \nabla^2 \Delta^{-1}[\mathbf{1}_{D} \tvr_\eps] \dd x }\dd t \\
        &= \int_0^T \psi(t) \lr{ \int_D \zeta(x) \tvr_\eps p(\tvr_\eps) \dd x }\dd t \\
        &\qquad + \int_0^T \psi(t) \lr{ \int_D p(\tvr_\eps) \zeta(x) (\Phi_\eps - \mathbb{I}) :  \nabla^2 \Delta^{-1}[\mathbf{1}_{D} \tvr_\eps] \dd x }\dd t =: \mathcal{T}_1 + \mathcal{T}_2.
    \end{align*}
    For $\mathcal{T}_1$, we use the equi-integrability of the sequence $\{\tvr_\eps p(\tvr_\eps)\}_{\eps \in (0,1)}$ from \eqref{equiintRhoP} and Dunford-Pettis theorem. In contrast, the convergence of the term $\mathcal{T}_2$ is quite subtle due to the presence of the term $(\Phi_\eps - \mathbb{I})$; we provide a rigorous proof, relying on the technical results developed in Appendix~ \ref{sec:AppB} (more precisely Appendix~\ref{sec:AppA}).\\
    The aim is to prove that
\begin{align}\label{eq1}
   \lim_{\eps \to 0}\mathcal{T}_2= \lim_{\eps \to 0} \int_0^T \psi(t) \int_D p(\tvr_\eps) \zeta(x) (\Phi_\eps - \mathbb{I}) : \nabla^2 \Delta^{-1} [\mathbf{1}_D \tvr_\eps] \dd x \dd t  = 0.
\end{align}
We remark that this would directly follow from Lemma~\ref{lem:equiConv} if we would have $\nabla^2 \Delta^{-1} [\mathbf{1}_D \tvr_\eps] \in L^\infty(0,T; L^\infty(\R^d))$ uniformly, however, the operator $\nabla^2 \Delta^{-1}$ just maps $L^p$ to $L^p$ for any \emph{finite} $p>1$. We overcome this drawback by considering the spaces $L (\log L)^{1+\kappa}$ and $\BMO$. \\ 

\paragraph{\textbf{The Role of $\kappa>0$ in $L (\log L)^{1+\kappa}$}} To apply Lemma~\ref{lem:equiConv} and conclude that $(\Phi_\eps - \mathbb{I}) f_\eps \to 0$, the sequence $f_\eps$ must be \textit{equi-integrable} in $L^1((0,T) \times D)$. In our context, this requires the equi-integrability of the product
\begin{equation*}
    f_\eps := p(\tvr_\eps)  \nabla^2 \Delta^{-1} [\mathbf{1}_D \tvr_\eps].
\end{equation*}
This requirement poses a significant mathematical hurdle. If we only possess bounds in the critical spaces $L \log L$ and $\BMO$, the product resides merely in $L^1$, which does not inherently guarantee equi-integrability. Indeed, a simple counter-example (e.g., a sequence of functions concentrating into a Dirac mass, detailed in Appendix~\ref{lologl-bmo-ce}) shows that mere $L^1$ boundedness is insufficient for the limit passage in Lemma~\ref{lem:equiConv}. \par 
Consequently, the $L(\log L)^{1+\kappa}$ hierarchy is indispensable. By lifting the pressure integrability to $L(\log L)^{1+\kappa}$ for $\kappa > 0$, we ensure that the product $p(\tvr_\eps) \nabla^2 \Delta^{-1} [\mathbf{1}_D \tvr_\eps]$ gains a slight logarithmic ``margin" beyond $L^1$. This improved regularity provides the uniform integrability necessary to satisfy the hypotheses of Lemma~\ref{lem:equiConv}, thereby allowing us to conclude \eqref{eq1}.

Another important observation is that the duality $$L^1(0,T; L (\log L)^{1+\kappa})\times L^\infty_{w-(\ast)}(0,T; \BMO)$$ only guarantees that the integrand is in $L^1(0,T)$, which is insufficient for passing to the limit. We must utilize the full space-time $L (\log L)^{1+\kappa}((0,T) \times D)$ regularity of the pressure to ensure equi-integrability in time, thereby preventing mass concentration and justifying the use of Lemma~\ref{lem:equiConv}. In the following, we provide a rigorous justification for this.

We define the space-time integrand $$f_\eps := p(\tvr_\eps) \nabla^2 \Delta^{-1} [\mathbf{1}_D \tvr_\eps], $$
and recall that the pressure $p(\tvr_\eps)$ is bounded uniformly in the space $ L(\log L)^{1+\kappa}(Q_T)\subset  L \log L(Q_T)$, where $Q_T = (0,T) \times D$. Let $A \subset Q_T$ be a measurable set and $A(t) = \{x \in D : (t,x) \in A\}$ its time-slices. By the Fefferman-Stein inequality (see Proposition~\ref{prop:Maxfcts}), we have
\begin{align*}
    \iint_A |f_\eps| \dd x \dd t &\leq \int_0^T \left( \int_{\R^d} |p(\tvr_\eps) \mathbf{1}_{A(t)}| \, |\nabla^2 \Delta^{-1} [\mathbf{1}_D \tvr_\eps]| \dd x \right) \dd t \\
    &\lesssim \int_0^T \| M(p(\tvr_\eps) \mathbf{1}_{A(t)}) \|_{L^1(D)} \| M^\sharp (\nabla^2 \Delta^{-1} [\mathbf{1}_D \tvr_\eps]) \|_{L^\infty(D)} \dd t \\
    &\lesssim \int_0^T \left( \int_D M(p(\tvr_\eps) \mathbf{1}_{A(t)}) \dd x \right) \dd t,
\end{align*}
where we have used the estimate $\sup_{t\in (0,T)} \|\nabla^2 \Delta^{-1} [\mathbf{1}_D \tvr_\eps]\|_{\BMO(D)} \lesssim \|\tvr_\eps\|_{L^\infty(0,T; L^\infty(D))} \lesssim 1$. Here, $M$ and $M^\sharp$ denote the Hardy-Littlewood maximal and sharp maximal operators, respectively, as defined in Appendix~\ref{sec:AppA}.

Applying the Stein-type $L \log L$ estimate from Proposition~\ref{prop:Maxfcts} on each time-slice $A(t)$ and integrating in time, we obtain
{\small
\begin{align*}
\iint_A |f_\eps| \dd x \dd t &\lesssim \int_0^T |A(t)| \left(1 - \log \frac{|A(t)|}{|D|} \right) \dd t \quad \\
&\quad + \int_0^T  \lr{ \int_{A(t)} p(\tvr_\eps) \log(1 + p(\tvr_\eps)) \dd x } \lr{1 + \left| \log \lr{\int_{A(t)} p(\tvr_\eps) \log(1 + p(\tvr_\eps)) \dd x} \right|}\dd t\\
&=: A_1+A_2.
\end{align*}
}
We have to ensure that the right-hand side vanishes uniformly in $\eps$ as $|A| \to 0$. This will be a consequence of the following estimates:
\begin{itemize}[leftmargin=*]
\item \textbf{Term $A_1$:} The function $h(s) = s (1 - \log (s / |D|))$ is concave for $s > 0$. Therefore, by Jensen's inequality and $\int_0^T |A(t)| \dd t = |A|$, we have $$A_1 = \int_0^T h(|A(t)|) \dd t \leq T h(|A|/T),$$
which depends only on the total measure $|A|$ and thus vanishes as $|A| \to 0$. \\

\item \textbf{Term $A_2$:} We define $I_\eps(t) = \int_{A(t)} \Psi(p(\tvr_\eps)) \dd x$, where $\Psi(s) = s \log(1+s)$. The first improved pressure estimate (Step~2 in Lemma~\ref{imprPress}) only places $\Psi(p(\tvr_\eps))$ in $L^1((0,T) \times D)$, which is insufficient to ensure that the term $I_\eps(1 + |\log I_\eps|)$ vanishes uniformly. However, utilizing the established $L (\log L)^{1+\kappa}$ estimate
\begin{align*}
    \int_0^T \int_{D} p(\tvr_\eps) \left[ \log(1 + p(\tvr_\eps)) \right]^{1+\kappa} \dd x \dd t \leq C < \infty
\end{align*}
from Lemma~\ref{imprPress}, the sequence $\{ \Psi(p(\tvr_\eps)) \}_{\eps \in (0,1)}$ is equi-integrable in $Q_T$ by the De la Vall\'{e}e-Poussin theorem, as it is bounded in the Orlicz space $L \lr{\log L}^\kappa(Q_T)$ with $\kappa > 0$. This implies
\begin{align*}
    \Theta_\eps(A) := \int_0^T I_\eps(t) \dd t = \iint_A \Psi(p(\tvr_\eps)) \dd x \dd t \to 0 \quad \text{as } |A| \to 0 \text{ uniformly in } \eps.
\end{align*}
To close the estimate for $A_2$, we apply Jensen's inequality to the function $g(s) = s (1 + |\log s|)$. Since $g$ is concave for $s \in (0, 1)$, we consider $|A|$ sufficiently small such that $\frac{1}{T} \Theta_\eps(A) < 1$ for all $\eps \in (0,1)$, yielding
\begin{align*}
    A_2 = \int_0^T g(I_\eps(t)) \dd t \leq T g\left( \frac{1}{T} \int_0^T I_\eps(t) \dd t \right) = T g\left( \frac{\Theta_\eps(A)}{T} \right) \to 0
\end{align*}
since $\Theta_\eps(A) \to 0$ uniformly in $\eps$ as $|A| \to 0$, and $\lim_{s \to 0} s \log s = 0$.
\end{itemize}

Combining $A_1$ and $A_2$, we conclude that $$ \lim_{|A| \to 0} \int \int_A |f_\eps| \dd x \dd t = 0$$ uniformly in $\eps$, thereby establishing the required equi-integrability of the sequence $\{f_\eps\}_{\eps \in (0,1)}$. Moreover, since $\psi(t)$ and $\zeta(x)$ are bounded, the sequence $\psi \zeta f_\eps$ is equi-integrable as well. Applying Lemma~\ref{lem:equiConv} with $E_\eps = \text{supp}(\Phi_\eps - \mathbb{I})$ -- whose measure vanishes as $\eps \to 0$ -- we conclude
\begin{align*}
    \lim_{\eps \to 0} \| (\Phi_\eps - \mathbb{I}) \psi \zeta f_\eps \|_{L^1((0,T) \times D)} = 0,
\end{align*}
thus proving \eqref{eq1}.


\subsection{Proof of Lemma~\ref{evf-lem}}
To complete the proof, with a slight rearrangement of terms obtained after the limit passage in \eqref{conv1bd} and following the arguments of \cite{FeireislZhang2010}, we obtain
\begin{align*}
	&\int_0^T \psi(t) \lr{ \int_D \zeta(x) \overline{p(\rho)\rho} \dd x } \dd t  
	- \int_0^T \psi(t) \lr{ \int_D \zeta(x) \lr{ \frac{2(d-1)}{d} \mu+ \mu_1 } \overline{\rho \div \vu} \dd x } \dd t\\ 
	&\quad + \int_0^T \partial_t \psi(t) \lr{ \int_D \rho \vu \cdot \zeta(x) \nabla \Delta^{-1}[\mathbf{1}_{D} \rho]  \dd x } \dd t 
	- \int_0^T \psi(t) \lr{ \int_D \rho \vu \cdot  \zeta(x) \nabla \Delta^{-1} \div (\rho \vu) \dd x } \dd t \\ 
    &\quad + \int_0^T \psi(t) \lr{ \int_D \rho \vu \otimes \vu : \nabla \zeta(x) \otimes \nabla \Delta^{-1}[\mathbf{1}_{D} \rho] \dd x } \dd t\\
	&\quad + \int_0^T \psi(t) \lr{ \int_D \rho \vu \otimes \vu :  \zeta(x) \nabla^2 \Delta^{-1}[\mathbf{1}_{D} \rho] \dd x } \dd t\\ 
	&\quad + \int_0^T \psi(t) \lr{ \int_D \overline{p(\rho)} \mathbb{I} : \nabla \zeta(x) \otimes \nabla \Delta^{-1}[\mathbf{1}_{D} \rho] \dd x } \dd t \\
	&\quad - \int_0^T \psi(t) \lr{ \int_D \mathbb{S}(\nabla \vu) : \nabla \zeta(x) \otimes \nabla \Delta^{-1}[\mathbf{1}_{D} \rho] \dd x } \dd t\\
	&\quad - \int_0^T \psi(t) \lr{ \int_D \rho \vc f \cdot  \zeta(x) \nabla \Delta^{-1}[\mathbf{1}_{D} \rho] \dd x } \dd t =0.
\end{align*}

On the other hand, considering 
\[
\phi(t,x) = \psi(t) \zeta(x) \nabla \Delta^{-1}[\mathbf{1}_{D} \varrho]
\] 
as test function in the limiting momentum equation \eqref{wkMom-lim}, it holds that
\begin{align*}
	&\int_0^T \psi(t) \lr{ \int_D \zeta(x) \overline{p(\rho)} \, \rho \dd x } \dd t  
	- \int_0^T \psi(t) \lr{ \int_D \zeta(x) \lr{\frac{2(d-1)}{d} \mu + \mu_1 } \rho \div \vu  \dd x } \dd t\\ 
	&\quad + \int_0^T \partial_t \psi(t) \lr{ \int_D \rho \vu \cdot \zeta(x) \nabla \Delta^{-1}[\mathbf{1}_{D} \rho ] \dd x } \dd t 
	- \int_0^T \psi(t) \lr{ \int_D \rho \vu \cdot  \zeta(x) \nabla \Delta^{-1} \div (\rho \vu) \dd x } \dd t \\ 
	&\quad + \int_0^T \psi(t) \lr{ \int_D \rho \vu \otimes \vu : \nabla \zeta(x) \otimes \nabla \Delta^{-1}[\mathbf{1}_{D} \rho] \dd x } \dd t\\
    &\quad + \int_0^T \psi(t) \lr{ \int_D \rho \vu \otimes \vu :  \zeta(x) \nabla^2 \Delta^{-1}[\mathbf{1}_{D} \rho] \dd x } \dd t\\ 
	&\quad + \int_0^T \psi(t) \lr{ \int_D \overline{p(\rho)} \mathbb{I} : \nabla \zeta(x) \otimes \nabla \Delta^{-1}[\mathbf{1}_{D} \rho] \dd x } \dd t \\
	&\quad - \int_0^T \psi(t) \lr{ \int_D \mathbb{S}(\nabla \vu) : \nabla \zeta(x) \otimes \nabla \Delta^{-1}[\mathbf{1}_{D} \rho] \dd x } \dd t\\
	&\quad - \int_0^T \psi(t) \lr{ \int_D \rho \vc f \cdot  \zeta(x) \nabla \Delta^{-1}[\mathbf{1}_{D} \rho] \dd x } \dd t = 0.
\end{align*}

Finally, by comparing the last two identities, we deduce the weak form of the effective viscous flux identity \eqref{evf-id-weak}, thus completing the proof of Theorem~\ref{thm1}.


\appendix
\section{Important function spaces and their properties}\label{sec:AppB}
In this section, we collect essential definitions and properties of the functional spaces required for our analysis. In particular, we focus on the hierarchy of Orlicz spaces and the BMO-Hardy framework.

\subsection{Logarithmic spaces, BMO spaces, and duality}
\begin{itemize}[leftmargin=*]
    \item \textbf{The $L \log L$ and $L(\log L)^{1+\kappa}$ spaces:} 
    For a bounded domain $D \subset \mathbb{R}^d$, we consider the Orlicz space $L \log L(D)$ associated with the Young function $\Psi(s) = s \log(1+s)$. This space is separable because $\Psi$ satisfies the $\Delta_2$-condition\footnote{For the definition of the $\Delta_2$-condition, see \cite[Chapter~8]{AF2003}; in fact $\Psi(2s) \leq 4 \Psi(s)$ for all $s>0$.} for all $s > 0$. For $\kappa > 0$, we further define the refined space $L(\log L)^{1+\kappa}(D)$ via the growth function $\Psi_{1+\kappa}(s) = s [\log(1+s)]^{1+\kappa}$. These spaces provide a hierarchy of integrability strictly stronger than $L^1(D)$, satisfying the continuous embeddings
    $$ L^p(D) \hookrightarrow L(\log L)^{1+\kappa}(D) \hookrightarrow L \log L(D) \hookrightarrow L^1(D), \quad \text{for any } p > 1. $$

    \item \textbf{The space $\BMO$:}
The space of functions of Bounded Mean Oscillation, $\BMO(\mathbb{R}^n)$, consists of locally integrable functions $f \in L_{loc}^1(\mathbb{R}^d)$ such that the mean oscillation over all cubes $Q \subset \mathbb{R}^n$ is uniformly bounded:
\begin{equation*}
    \|f\|_{\BMO(\mathbb{R}^n)} := \sup_{Q \subset \mathbb{R}^n} \frac{1}{|Q|} \int_Q |f(x) - f_Q| \dd x < \infty,
\end{equation*}
where $f_Q = \frac{1}{|Q|} \int_Q f(y) \dd y$ denotes the average of $f$ over the cube $Q$. For a bounded domain $D$, we define $\BMO(D)$ as the space of functions that are restrictions to $D$ of functions in $\BMO(\mathbb{R}^d)$, equipped with the semi-norm
\begin{equation*}
    \|f\|_{\BMO(D)} = \inf \{ \|g\|_{\BMO(\mathbb{R}^d)} : g|_D = f \}.
\end{equation*}
The space $(\BMO(D), \| \cdot \|_{\BMO(D)})$ becomes a Banach space when identifying constant functions.
    \item \textbf{Exponential spaces, duality with Logarithmic spaces, and embeddings:}
    The dual of $L \log L(D)$ is identified as \textit{the exponential Orlicz space} $\exp L(D)$, while the dual of $L(\log L)^{1+\kappa}(D)$ is the space $\exp L^{1/(1+\kappa)}(D)$, defined by the Young function $\Psi_{1+\kappa}(s) = \exp(s^{1/(1+\kappa)}) - 1$. For further properties of these spaces, we refer to \cite[Section~2]{PokornySkrisovsky2021b}. By the John-Nirenberg inequality \cite{JohnNirenberg1961} (see also \cite[Chapter~6]{Duoandikoetxea2001}), any function $f \in \BMO(D)$ satisfies an exponential decay of its distribution function, leading to the continuous embeddings
    $$ \BMO(D) \hookrightarrow \exp L(D) \hookrightarrow \exp L^{1/(1+\kappa)}(D). $$

    \item \textbf{Hardy Space and Duality:} Finally, we recall the duality relationship involving the space of bounded mean oscillation and the Hardy space $\mathcal{H}^1(\mathbb{R}^d)$ (see \cite[Chapter~5.3]{Duoandikoetxea2001}). A crucial property in this context is that $\mathcal{H}^1(\mathbb{R}^d)$ is a \textit{separable} Banach space, which allows for the identification of its dual as
\begin{equation*}
    (\mathcal{H}^1(\mathbb{R}^d))^* = \BMO(\mathbb{R}^d).
\end{equation*}
In the case of a bounded Lipschitz domain $D \subset \mathbb{R}^d$, this duality remains valid for the corresponding spaces $\mathcal{H}^1(D)$ and $\BMO(D)$ defined via atoms and restrictions, respectively. The separability of $\mathcal{H}^1(D)$ is particularly significant for our analysis, as it ensures the well-definedness of the time-dependent dual space $L^\infty_{w-(\ast)}(0,T; \BMO(D))$.
\end{itemize}


\subsection{Time dependent spaces}
We consider $D$ to be $\mathbb{R}^d$ or a sufficiently smooth bounded domain in $\mathbb{R}^d$. Then we have:
\begin{itemize}[leftmargin=*]
    \item \textbf{Embeddings 1:} 
As a consequence of the definitions, Fubini's theorem, and Jensen's inequality, we have the following continuous embeddings for the logarithmic scale:
\begin{equation}\label{spatio_temp_LlogL}
    L\log L( (0,T) \times D) \hookrightarrow L^1(0,T; L\log L(D)) \hookrightarrow L^1((0,T) \times D).
\end{equation}
Similarly, by analogous arguments to \eqref{spatio_temp_LlogL}, the spatio-temporal embedding also holds for the refined scale:
\begin{equation*}
    L(\log L)^{1+\kappa}((0,T) \times D) \hookrightarrow L^1(0,T; L(\log L)^{1+\kappa}(D)).
\end{equation*}

    \item \textbf{Time-Dependent Dualities 1:} Following \cite[Chapter 2]{Chaudhuri2021}, we identify the time-dependent dual space as
\begin{equation}\label{loglexpl}
     L^\infty_{w-(\ast)}(0,T; \exp L^{1/(1+\kappa)}(D)) = [L^1(0,T; L (\log L)^{1+\kappa}(D))]^*. 
\end{equation}
Since the spatial Orlicz space $L (\log L)^{1+\kappa}(D)$ is separable, the corresponding Bochner space is separable, too. This duality \eqref{loglexpl} is essential for ensuring that the terms involving pressure are well-defined within our framework.
  \item \textbf{Time dependent Dualities 2:} A similar relation holds for the Hardy and $\BMO$ spaces. Since the Hardy space $\mathcal{H}^1(D)$ is separable, we have the duality
\begin{equation*}
     L^\infty_{w-(\ast)}(0,T; \BMO(D)) = [L^1(0,T; \mathcal{H}^1(D))]^*.
\end{equation*}
\item \textbf{Embeddings 2:}  Corresponding to the spatial embedding $\BMO(D) \hookrightarrow \exp L(D)$, it naturally extends to the following chain of spatio-temporal embeddings for any $\kappa\geq 0$:
\begin{equation}\label{time_BMO_exp}
    L^\infty_{w-(\ast)}(0,T; \BMO(D)) \hookrightarrow L^\infty_{w-(\ast)}(0,T; \exp L(D)) \hookrightarrow L^\infty_{w-(\ast)}(0,T; \exp L^{\frac{1}{1+\kappa}}(D)) .
\end{equation}

\end{itemize}

\subsection{Important results in $\BMO$, $L \log L$, and $L (\log L)^{1+\kappa}$}\label{sec:AppA}

We begin with the classical definition of the maximal function(s) from \cite[Chapters~2--6]{Duoandikoetxea2001}:
\begin{defin}
    Let $f: \R^d \to \R$ be a locally integrable function.
    \begin{itemize}[leftmargin=*]
        \item The \emph{Hardy-Littlewood maximal function} is defined as
    \begin{align*}
        Mf(x) := \sup_{r>0} \frac{1}{|B_r(x)|} \int_{B_r(x)} |f(y)| \dd y,
    \end{align*}
    where $B_r(x)$ is the ball with radius $r>0$ and midpoint $x$.

    \item The \emph{Fefferman-Stein sharp maximal function} is defined as
    \begin{align*}
        M^\sharp f(x) := \sup_{B \ni x} \frac{1}{|B|} \int_B |f(y) - f_B| \dd y,
    \end{align*}
    where the supremum is taken over all balls $B$ containing $x$, and $f_B := |B|^{-1} \int_B f(z) \dd z$ is the mean value of $f$ over $B$.

    \item The space of \emph{bounded mean oscillation} $\BMO(\R^d)$ is defined as all functions $f \in L_{loc}^1(\R^d)$ with $M^\sharp f \in L^\infty(\R^d)$. It becomes a Banach space with the norm
    \begin{align*}
        \|f\|_{\BMO} := \|M^\sharp f\|_{L^\infty}
    \end{align*}
    when identifying constant functions. For a bounded domain $D \subset \R^d$, we define $\BMO(D)$ as all functions $f \in L^1(D)$ such that there is a function $g \in \BMO(\R^d)$ with $g |_D = f$.
    \item The $L \log L$ space (sometimes called \emph{Zygmund space}) is the space of all measurable functions $f$ such that
    \begin{align*}
        f \log(1+|f|) \in L^1.
    \end{align*}
    It becomes a Banach space with the norm
    \begin{align*}
        \|f\|_{L \log L} := \|f \log(1+|f|)\|_{L^1}.
    \end{align*}
    \end{itemize}
\end{defin}

The fundamental advantage for us is that $\nabla^2 \Delta^{-1}$ maps $L^\infty$ to $\BMO \supset L^\infty$ such that we can work with this larger space instead. We report some fundamental connections between the above definitions:
\begin{prop}\label{prop:Maxfcts}
Let $f \in L_{loc}^1(\R^d)$. Then there holds:
    \begin{enumerate}[leftmargin=*]
    
        \item Let $f \in L \log L$ and $g \in \BMO$. Then $fg \in L^1$.

        \item (weak-(1,1) estimate) Let $f \in L^1$. Then, for any $\lambda>0$, we have
        \begin{align*}
            |\{Mf > \lambda \}| \lesssim \frac{1}{\lambda} \int_{\R^d} |f| \dd x.
        \end{align*}

        \item (Fefferman-Stein estimate) If $f \in L \log L$, then $Mf \in L^1$. Moreover, if $f \in L \log L$ and $g \in \BMO$, then
        \begin{align*}
            \int_{\R^d} |fg| \dd x \lesssim \int_{\R^d} Mf \cdot M^\sharp g \dd x, && \text{in particular,} && \|fg\|_{L^1} \lesssim \| M f\|_{L^1} \| M^\sharp g\|_{L^\infty}.
        \end{align*}

        \item Let $D \subset \R^d$ be a bounded domain. For any $1<p<\infty$, we have $L^p \subset L \log L \subset L^1$, and $L^\infty \subset \BMO \subset L^p$, where all inclusions are strict.
        
        \item (Stein-type $L \log L$ estimate) Let $D \subset \R^d$ be a bounded domain, $E \subset D$ be measurable, $f \in L \log L$ and ${\rm supp}  f \subset E$. Then
        \begin{align*}
            \int_D Mf \dd x \lesssim |E|\lr{ 1-\log \frac{|E|}{|D|} } + \lr{ \int_E |f| \log(1+|f|) \dd x } \lr{ 1 + \left| \log \int_E |f| \log(1+|f|) \dd x \right| }.
        \end{align*}
    \end{enumerate}
\end{prop}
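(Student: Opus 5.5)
The statement collects classical facts from harmonic analysis, so the plan is to reduce each item to a standard tool (Vitali covering, Hölder/Orlicz duality, John--Nirenberg, the Fefferman--Stein good-$\lambda$ inequality, and a layer-cake computation) and to do an explicit computation only where the paper needs the precise form, namely item (5). I would prove the items in the order (2), (4), (1), (5), (3), since (5) uses (2) and the integrability part of (3) follows from (5).

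For (2), I would use the Vitali covering lemma. For every $x\in\{Mf>\lambda\}$ choose a ball $B_x\ni x$ with $\int_{B_x}|f|>\lambda|B_x|$. Extract a disjoint subfamily $\{B_j\}$ whose threefold dilates cover any compact subset of $\{Mf>\lambda\}$. This gives $|\{Mf>\lambda\}|\le 3^d\lambda^{-1}\|f\|_{L^1}$.

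For (4), the inclusions $L^p\subset L\log L\subset L^1$ on bounded $D$ follow from $s\log(1+s)\lesssim 1+s^p$. The inclusion $\BMO\subset L^p$ follows from the John--Nirenberg inequality, which gives exponential decay of the distribution function of $g-g_Q$. Strictness is shown by explicit radial examples near a point $x_0\in D$: $|x-x_0|^{-d}\log^{-2}|x-x_0|$ lies in $L^1\setminus L\log L$, $\log|x-x_0|$ lies in $\BMO\setminus L^\infty$, and an analogous logarithmic power separates $L\log L$ from $L^p$.

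For (1), I would combine John--Nirenberg with the Orlicz--Hölder inequality for the complementary pair $(L\log L,\exp L)$. On a bounded set, $g\in\exp L$ locally, so $\int|fg|\le 2\|f\|_{L\log L}\|g\|_{\exp L}$.

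For (5), I would use a layer-cake argument built on (2). Applying (2) to $f\mathbf 1_{\{|f|>\lambda/2\}}$ gives
\begin{align*}
|\{Mf>\lambda\}\cap D|\le \min\Big\{|D|,\ \frac{C}{\lambda}\int_{\{|f|>\lambda/2\}}|f|\dd x\Big\}.
\end{align*}
Then $\int_D Mf=\int_0^\infty|\{Mf>\lambda\}\cap D|\dd\lambda$, and I would split $f$ into the parts where $|f|\le1$ and where $|f|>1$.
\begin{itemize}[leftmargin=*]
\item On the part $|f|\le 1$, the $L^1$ mass is at most $|E|$. Integrating $\min\{|D|,C|E|/\lambda\}$ over $\lambda\lesssim1$ gives $|E|(1-\log(|E|/|D|))$ after cutting at $\lambda=C|E|/|D|$.
\item On the part $|f|>1$, set $J=\int_E|f|\log(1+|f|)\dd x$. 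For large $\lambda$, Fubini gives $\int_1^\infty\lambda^{-1}\int_{|f|>\lambda/2}|f|\dd x\dd\lambda\lesssim J$. For small $\lambda$, the mass is $\lesssim J$ (since $|f|>1$ there), and cutting $\min\{|D|,CJ/\lambda\}$ at $\lambda\sim J$ yields the factor $J(1+|\log J|)$.
\end{itemize}
This bookkeeping, in particular keeping the constants independent of $E$ so that the estimate can later be used uniformly on time slices $A(t)$, is the main place where care is needed.

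For (3), the statement $Mf\in L^1(D)$ is immediate from (5). The inequality $\int|fg|\lesssim\int Mf\,M^\sharp g$ is where I expect the real obstacle. I would try to prove it via the Fefferman--Stein good-$\lambda$ inequality $|\{M_dg>2\lambda,\,M^\sharp g\le\gamma\lambda\}|\le C\gamma|\{M_dg>\lambda\}|$, applied with weight $Mf$. Failing that, I would localize to $D$ and use $\mathcal H^1$--$\BMO$ duality after subtracting the mean of $g$, together with the fact that $L\log L$ functions with compact support belong to $\mathcal H^1_{\rm loc}$ once their mean is removed; the constant $g_Q$ is then controlled by $\|f\|_{L^1}$. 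Since the paper only uses the consequence $\|fg\|_{L^1}\lesssim\|Mf\|_{L^1}\|g\|_{\BMO}$, this weaker form suffices and is what I would secure first. It follows because $Mf\ge|f|$ a.e., so $\|f\|_{L\log L}$-type controls dominate $|f|$, and because $|g-g_{Q_D}|$ is controlled pointwise through $M^\sharp g$ plus a logarithmic term absorbed by the exponential integrability from John--Nirenberg.
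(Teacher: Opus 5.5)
Your argument for (5) is the paper's proof: layer-cake on $D$, the weak-$(1,1)$ bound applied separately to $f\mathbf 1_{\{|f|\le 1\}}$ and $f\mathbf 1_{\{|f|>1\}}$, cutting $\int_0^1\min\{|D|,m/\lambda\}\dd\lambda$ at $\lambda\sim m/|D|$, and Fubini for $\lambda>1$. For (1), (2) and (4) the paper only cites the literature, and your Vitali, John--Nirenberg and Orlicz--H\"older sketches are the standard arguments. For strictness of $\BMO\subset L^p$ you still need an example, such as $|x-x_0|^{-a}$ with $0<ap<d$. Your tacit localization to bounded sets is necessary. For $f\neq0$ one has $Mf(x)\gtrsim|x|^{-d}$ for large $|x|$, so $Mf\notin L^1(\R^d)$. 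Also, with $\Psi(s)=s\log(1+s)$ the class $L\log L(\R^d)$ contains non-integrable functions such as $(1+|x|)^{-3d/4}$, so (1) already fails on $\R^d$ for $g\equiv1$.

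The genuine gap is the second half of (3), and it cannot be closed because the inequality is false as written. For $g\equiv1$ one has $M^\sharp g\equiv0$, whereas $\int|fg|\dd x=\|f\|_{L^1}>0$. Decay of $g$ does not rescue the localized version $\|fg\|_{L^1(D)}\lesssim\|Mf\|_{L^1(D)}\|g\|_{\BMO}$ that you plan to secure (and that the paper uses for $\mathcal T_2$). Take $g_R(x)=\min\{1,\max\{0,\log(R^2/|x|)/\log R\}\}$: it is bounded, compactly supported, equal to $1$ on $D\subset B_R$, and $\|g_R\|_{\BMO}\le2\|\log|x|\|_{\BMO}/\log R\to0$. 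If you use $\|Mf\|_{L^1(\R^d)}$ instead, the right-hand side is $+\infty$ for every $f\neq0$ unless $g$ is constant.

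So your routes must break down:
\begin{itemize}
\item The good-$\lambda$ argument needs an a priori decay hypothesis on $g$. It also transfers to the measure $Mf\dd x$ only under an $A_\infty$ condition, which $Mf$ need not satisfy.
\item In your duality route, the leftover term $|g_Q|\,\|f\|_{L^1}$ is exactly what the seminorm $\|g\|_{\BMO}$ cannot see.
\item Your final sentence, controlling $|g-g_{Q_D}|$ pointwise by $M^\sharp g$ plus a logarithm, is not an argument: $M^\sharp g\in L^\infty$ while $g-g_{Q_D}$ is unbounded in general. Even granted, it would leave $g_{Q_D}$ untouched.
\end{itemize}

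What can be proved, and what the application actually needs, is the inhomogeneous estimate. If $\operatorname{supp}f\subset D\subset Q$, John--Nirenberg and the Orlicz--H\"older inequality give $\int_D|fg|\dd x\lesssim\|f\|_{L\log L(D)}\big(\|g\|_{\BMO}+|g_Q|\big)$ with the Luxemburg norm. For $g=\nabla^2\Delta^{-1}[\mathbf 1_D\tvr_\eps]$ the mean satisfies $|g_Q|\lesssim\|\tvr_\eps\|_{L^2}\lesssim1$.

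For the equi-integrability in the treatment of $\mathcal T_2$ you can even bypass maximal functions. Young's inequality $ab\le a\log(1+a)+e^b-1$ with $a=p(\tvr_\eps)$ and $b=c|g|$ gives $\iint_A p(\tvr_\eps)|g|\le c^{-1}\iint_A p(\tvr_\eps)\log(1+p(\tvr_\eps))+c^{-1}\iint_A\big(e^{c|g|}-1\big)$. For $c>0$ small, both terms tend to zero uniformly in $\eps$ as $|A|\to0$: the first by the $L(\log L)^{1+\kappa}$ bound, the second by John--Nirenberg.
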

\begin{proof}
    The first four statements are classical in the literature, see, e.g., \cite{BennettSharpley1988, Duoandikoetxea2001}. Let us therefore just prove the last one, which is crucial for us. We first use the layer-cake formula to express
    \begin{align*}
        \int_D Mf \dd x = \int_0^\infty |\{Mf > \lambda\}| \dd \lambda = \int_0^1 |\{Mf > \lambda\}| \dd \lambda + \int_1^\infty |\{Mf > \lambda\}| \dd \lambda =: I_1 + I_2.
    \end{align*}
    To estimate $I_1$, we split
    \begin{align*}
        f = f_1 + f_2 := f \mathbf{1}_{|f| \leq 1} + f \mathbf{1}_{|f| > 1}
    \end{align*}
    and see that by definition, we have $\{ Mf > \lambda\} \subset \{Mf_1 > \lambda/2 \} \cup \{Mf_2 > \lambda/2\}$. Furthermore, obviously, $|\{Mf_1 > \lambda/2\}| \leq |D|$, and the weak-(1,1) estimate together with $|f_1| \leq 1$ and ${\rm supp} f_1 \subset E$ yields
    \begin{align*}
        |\{Mf_1 > \lambda/2\}| \lesssim \min\{|D|, \lambda^{-1} |E|\}.
    \end{align*}
    In turn, by seeing that $|Mf_1| \leq 1$ and splitting the integral at the balance point $\lambda_0 = |E| / |D|$, we estimate
    \begin{align*}
        \int_0^1 |\{Mf_1 > \lambda/2\}| \dd \lambda \lesssim \int_0^1 \min \{|D|, \lambda^{-1} |E| \} \dd \lambda \lesssim |E| - |E| \log \frac{|E|}{|D|}.
    \end{align*}
    
    For $f_2$, we have similarly $$|\{Mf_2 > \lambda/2\}| \lesssim \min\{|D|, \lambda^{-1} \int_D |f_2| \dd x \} = \min\{|D|, \lambda^{-1} \int_{E \cap \{|f|>1\}} |f| \dd x\} =: \min\{|D|, \lambda^{-1} Z\} .$$ Then, since $s \leq 2 s \log(1+s)$ if $s \geq 1$, we have
    \begin{align*}
        \int_0^1 |\{Mf_2 > \lambda/2\}| \dd \lambda &\lesssim \int_0^1 \min\{|D|, \lambda^{-1} Z\} \dd \lambda = Z - Z \log \frac{Z}{|D|} \\
        &\lesssim \lr{ \int_E |f| \log(1+|f|) \dd x } \lr{ 1 + \log |D| + \left| \log \int_{E \cap \{|f|>1\}} |f| \dd x \right| } \\
        &\lesssim \lr{ \int_E |f| \log(1+|f|) \dd x } \lr{ 1 + \left| \log \int_E |f| \log(1+|f|) \dd x \right| },
    \end{align*}
    such that in total
    \begin{align*}
        I_1 \lesssim |E| - |E| \log \frac{|E|}{|D|} + \lr{ \int_E |f| \log(1+|f|) \dd x } \lr{ 1 + \left| \log \int_E |f| \log(1+|f|) \dd x \right| } .
    \end{align*}

    For $\lambda > 1$, we split again
    \begin{align*}
        f = f_1 + f_2 := f \mathbf{1}_{|f| \leq \lambda/2} + f \mathbf{1}_{|f| > \lambda/2}.
    \end{align*}
    Then $Mf \leq Mf_1 + Mf_2$ and $\{Mf > \lambda\} \subset \{Mf_1 > \lambda/2\} \cup \{Mf_2 > \lambda/2\} = \{Mf_2 > \lambda/2\}$ and by the weak-(1,1) estimate, we have
    \begin{align*}
        |\{Mf > \lambda\}| \lesssim \frac{1}{\lambda} \int_{|f|>\lambda/2} |f| \dd x.
    \end{align*}
    Thus, using Fubini's theorem and the fact that ${\rm supp}  f \subset E$,
    \begin{align*}
        I_2 = \int_1^\infty |\{Mf > \lambda\}| &\lesssim \int_1^\infty \frac{1}{\lambda} \int_{|f| >\lambda/2} |f| \dd x \dd \lambda = \int_E |f| \int_1^{2|f|} \frac{1}{\lambda} \dd \lambda \dd x \\
        &\lesssim \int_E |f| \log(1+|f|) \dd x,
    \end{align*}
    where in the last inequality we used that for any $s > 0$, we have $\log(2 s) \leq 2 \log(1+s)$. Combining the above estimates yields the desired.
\end{proof}


\subsection{The $L\log L \times \BMO$ product and the failure of equi-integrability} \label{lologl-bmo-ce}

Two explain why the $L (\log L)^{1+\kappa}$-bound is strictly needed, we consider two sequences, $\{f_n\}_{n\in\mathbb{N}}$ and $\{g_n\}_{n\in\mathbb{N}}$, which are uniformly bounded in $L \log L(D)$ and $\BMO(D)$, respectively. The duality between these spaces ensures that the product sequence $\{f_n g_n\}_{n\in\mathbb{N}}$ is uniformly bounded in $L^1(D)$. However, since $L^1$ is not a reflexive space, this boundedness is insufficient to rule out the formation of concentration measures (Dirac-like singularities) in the limit. In what follows, we construct an explicit example to demonstrate this phenomenon.\\

Let us consider the domain $\Omega=(0,1)$ and let $I_n = (0, e^{-n})$. We define two sequences of function $\{f_n\}_{n\in \mathbb{N}}$ and $\{g_n\}_{n\in \mathbb{N}}$ by
    \begin{align*}
    f_n(x) = \frac{e^n}{n} \mathbf{1}_{I_n}(x) , && g_n(x) \equiv g(x) = \log(1/x) .
    \end{align*}
    From \cite[Chapter~6]{Duoandikoetxea2001}, we know that $\displaystyle g \in \BMO(0,1)$. Furthermore, we calculate 
$$\int_0^1 f_n \log(1+f_n) \dd x = |I_n|  \frac{e^n}{n} \log\left(1 + \frac{e^n}{n}\right)=  \frac{1}{n} \log\left(1 + \frac{e^n}{n}\right) .$$
Using $2e^n > e^n + n > e^n$ for each $n \geq 1$, it holds
\begin{align*}
   \left(1 - \frac{\log n}{n}\right) \leq  \int_0^1 f_n \log(1+f_n) \dd x \leq \left(\frac{\log 2}{n} + 1 - \frac{\log n}{n}\right).
\end{align*}
Thus, $f_n$ is uniformly bounded in $L \log L$. Computing the integral of the product $f_n g$ gives 
\begin{align*}
   \int_0^1 |f_n g| \dd x = \int_0^1 f_n g \dd x = \int_0^{e^{-n}} \frac{e^n}{n} \cdot \log(1/x) \dd x .
\end{align*}
We use the change of variables $u = \log(1/x)$ and partial integration to obtain 
\begin{align*}
    \int_0^1 |f_n g| \dd x = \frac{e^n}{n} \int_n^\infty u e^{-u} \dd u = \frac{e^n}{n} \left[ -e^{-u}(u+1) \right]_n^\infty = \frac{e^n}{n} \left( e^{-n}(n+1) \right) = \frac{n+1}{n} >1 .
\end{align*}

Since we can find a set $E=I_n$ with arbitrarily small measure where the integral of $f_n g$ stays above 1, the sequence $\{ f_n g \}_{n \in \N}$ cannot be equi-integrable.

\begin{rem}
    It is worth to note that $\{f_n\}_{n\in \mathbb{N}} $ is not uniformly bounded in $L(\log L)^{1+\kappa}$ for any $\kappa >0$, since for any $n \geq 1$, it holds
 \[
\log \left(1+\frac{e^n}{n}\right) \geq n - \log n \geq \frac{n}{2}
 \Rightarrow 
\left[\log\lr{1+\frac{e^n}{n}}\right]^{1+\kappa} \geq \lr{\frac{n}{2}}^{1+\kappa}.
\]
Thus, we obtain 
$$\int_0^1 f_n |\log(1+f_n) |^{1+\kappa}\dd x = |I_n|  \frac{e^n}{n} \left[\log\left(1 + \frac{e^n}{n}\right)\right]^{1+\kappa}=  \frac{1}{n} \left[\log\left(1 + \frac{e^n}{n}\right)\right]^{1+\kappa} \geq \frac{(n/2)^{\kappa}}{2}.$$
We also note that our pressure sequence $\{p(\tvr_\eps)\}_{\eps \in (0,1)}$ \emph{does} have a uniform bound in $L(\log L)^{1+\kappa}$ for any $\kappa \in (0, \infty)$, hence, it does indeed not develop such pathological Dirac-like concentrations.
\end{rem}

\section*{Acknowledgement}
	\textit{The authors thank Lubo\v{s} Pick and Piotr Mucha for helpful discussions about $L \log L$ and $\BMO$. The work of N.C. is supported by the NAWA ULAM grant BPN/SEL/2025/1/00005/U /00001. F. O. has been supported by the Primus grant PRIMUS 26/SCI/026.}

\bibliographystyle{amsalpha}
\bibliography{Lit}

\providecommand{\bysame}{\leavevmode\hbox to3em{\hrulefill}\thinspace}
\providecommand{\MR}{\relax\ifhmode\unskip\space\fi MR }
\providecommand{\MRhref}[2]{%
  \href{http://www.ams.org/mathscinet-getitem?mr=#1}{#2}
}
\providecommand{\href}[2]{#2}
\begin{thebibliography}{HNO26}

\bibitem[AF03]{AF2003}
Robert~A. Adams and John J.~F. Fournier, \emph{Sobolev spaces}, second ed., Pure and Applied Mathematics (Amsterdam), vol. 140, Elsevier/Academic Press, Amsterdam, 2003. \MR{2424078}

\bibitem[All90a]{Allaire1990a}
Gr\'{e}goire Allaire, \emph{Homogenization of the {N}avier--{S}tokes equations in open sets perforated with tiny holes. {I}. {A}bstract framework, a volume distribution of holes}, Arch. Rational Mech. Anal. \textbf{113} (1990), no.~3, 209--259. \MR{1079189}

\bibitem[All90b]{Allaire1990b}
\bysame, \emph{Homogenization of the {N}avier--{S}tokes equations in open sets perforated with tiny holes. {II}. {N}oncritical sizes of the holes for a volume distribution and a surface distribution of holes}, Arch. Rational Mech. Anal. \textbf{113} (1990), no.~3, 261--298. \MR{1079190}

\bibitem[BC24]{BasaricChaudhuri2024}
Danica Basari{\'c} and Nilasis Chaudhuri, \emph{Low mach number limit on perforated domains for the evolutionary navier--stokes--fourier system}, Nonlinearity \textbf{37} (2024), no.~6, 065008.

\bibitem[BO22]{BellaOschmann2022}
Peter Bella and Florian Oschmann, \emph{Homogenization and low {M}ach number limit of compressible {N}avier-{S}tokes equations in critically perforated domains}, Journal of Mathematical Fluid Mechanics \textbf{24} (2022), no.~3, 1--11.

\bibitem[BO23]{BellaOschmann2023}
\bysame, \emph{Inverse of divergence and homogenization of compressible {N}avier--{S}tokes equations in randomly perforated domains}, Arch. Ration. Mech. Anal. \textbf{247} (2023), no.~2, 14.

\bibitem[BOP26]{BasaricOschmannPan2026}
Danica Basari{\'c}, Florian Oschmann, and Jiaojiao Pan, \emph{Qualitative derivation of a density-dependent incompressible {D}arcy law}, Science China Mathematics (2026), 1--24.

\bibitem[BPZ14]{BreschPerrinZatorska2014}
Didier Bresch, Charlotte Perrin, and Ewelina Zatorska, \emph{Singular limit of a {N}avier--{S}tokes system leading to a free/congested zones two-phase model}, Comptes Rendus Mathematique \textbf{352} (2014), no.~9, 685--690.

\bibitem[Bra24]{Bravin2024}
Marco Bravin, \emph{Ad hoc test functions for homogenization of compressible viscous fluid with application to the obstacle problem in dimension two}, Journal of Evolution Equations \textbf{24} (2024), no.~4, 84.

\bibitem[BS88]{BennettSharpley1988}
Colin Bennett and Robert~C Sharpley, \emph{Interpolation of operators}, vol. 129, Academic press, 1988.

\bibitem[Cha21]{Chaudhuri2021}
Nilasis Chaudhuri, \emph{Qualitative properties of solutions to partial differential equations arising in fluid dynamics}, ProQuest LLC, Ann Arbor, MI, 2021, Thesis (Ph.D.)--Technische Universit\"at Berlin.

\bibitem[DFL17]{DieningFeireislLu2017}
Lars Diening, Eduard Feireisl, and Yong Lu, \emph{The inverse of the divergence operator on perforated domains with applications to homogenization problems for the compressible {N}avier--{S}tokes system}, ESAIM: Control, Optimisation and Calculus of Variations \textbf{23} (2017), no.~3, 851--868.

\bibitem[DH13]{DegondHua2013}
Pierre Degond and Jiale Hua, \emph{Self-organized hydrodynamics with congestion and path formation in crowds}, Journal of Computational Physics \textbf{237} (2013), 299--319.

\bibitem[DHN11]{DegondHuaNavoret2011}
Pierre Degond, Jiale Hua, and Laurent Navoret, \emph{Numerical simulations of the {E}uler system with congestion constraint}, Journal of Computational Physics \textbf{230} (2011), no.~22, 8057--8088.

\bibitem[Duo01]{Duoandikoetxea2001}
Javier Duoandikoetxea, \emph{Fourier analysis}, vol.~29, American Mathematical Soc., 2001.

\bibitem[FL15]{FeireislLu2015}
Eduard Feireisl and Yong Lu, \emph{Homogenization of stationary {N}avier--{S}tokes equations in domains with tiny holes}, Journal of Mathematical Fluid Mechanics \textbf{17} (2015), no.~2, 381--392.

\bibitem[FLM16]{FeireislLuMalek2016}
Eduard Feireisl, Yong Lu, and Josef M{\'a}lek, \emph{On {PDE} analysis of flows of quasi-incompressible fluids}, ZAMM Z. Angew. Math. Mech. \textbf{96} (2016), no.~4, 491--508.

\bibitem[FN09]{FeireislNovotny2009singlim}
Eduard Feireisl and Anton{\'\i}n Novotn{\'y}, \emph{Singular limits in thermodynamics of viscous fluids}, vol.~2, Springer, 2009.

\bibitem[FZ10]{FeireislZhang2010}
Eduard Feireisl and Ping Zhang, \emph{Quasi-neutral limit for a model of viscous plasma}, Archive for rational mechanics and analysis \textbf{197} (2010), no.~1, 271--295.

\bibitem[GH19]{GiuntiHoefer2019}
Arianna Giunti and Richard~Matthias H{\"o}fer, \emph{Homogenisation for the {S}tokes equations in randomly perforated domains under almost minimal assumptions on the size of the holes}, Ann. Inst. H. Poincar\'{e} Anal. Non Lin\'{e}aire \textbf{36} (2019), no.~7, 1829--1868. \MR{4020526}

\bibitem[Giu21]{Giunti2021}
Arianna Giunti, \emph{Derivation of {D}arcy’s law in randomly perforated domains}, Calculus of Variations and Partial Differential Equations \textbf{60} (2021), no.~5, 1--30.

\bibitem[HHR26]{HoeferHuebner2026}
Richard~Matthias H{\"o}fer and Eleni H{\"u}bner-Rosenau, \emph{Homogenization of the {N}avier-{S}tokes equations in a randomly perforated domain in the inviscid limit}, arXiv preprint arXiv:2604.14792 (2026).

\bibitem[HKS21]{HoeferKowalczykSchwarzacher2021}
Richard~Matthias H{\"o}fer, Karina Kowalczyk, and Sebastian Schwarzacher, \emph{Darcy’s law as low {M}ach and homogenization limit of a compressible fluid in perforated domains}, Mathematical Models and Methods in Applied Sciences \textbf{31} (2021), no.~09, 1787--1819.

\bibitem[HLO26]{HoeferLuOschmann2026}
Richard~Matthias H{\"o}fer, Yong Lu, and Florian Oschmann, \emph{Qualitative/quantitative homogenization of some non-{N}ewtonian flows in perforated domains}, Mathematische Annalen \textbf{395} (2026), no.~2, 39.

\bibitem[HNO26]{HoeferNecasovaOschmann2026}
Richard~Matthias H{\"o}fer, {\v{S}}{\'a}rka Ne{\v{c}}asov{\'a}, and Florian Oschmann, \emph{Quantitative homogenization of the compressible {N}avier--{S}tokes equations towards {D}arcy's law}, Annales de l'Institut Henri Poincaré C, Analyse Non Linéaire \textbf{43} (2026), no.~3, 669--709.

\bibitem[JN61]{JohnNirenberg1961}
Fritz John and Louis Nirenberg, \emph{On functions of bounded mean oscillation}, Communications on Pure and Applied Mathematics \textbf{14} (1961), no.~3, 415--426.

\bibitem[Lio98]{Lions1998}
Pierre-Louis Lions, \emph{Mathematical topics in fluid mechanics. {V}ol. 2}, Oxford Lecture Series in Mathematics and its Applications, vol.~10, The Clarendon Press, Oxford University Press, New York, 1998, Compressible models, Oxford Science Publications. \MR{1637634}

\bibitem[LPY25]{LuPanYang2025}
Yong Lu, Jiaojiao Pan, and Peikang Yang, \emph{Homogenization of inhomogeneous incompressible navier-stokes equations in domains with very tiny holes}, arXiv preprint arXiv:2501.05734 (2025).

\bibitem[LQ24]{LuQian2024}
Yong Lu and Zhengmao Qian, \emph{Homogenization of some evolutionary non-{N}ewtonian flows in porous media}, Journal of Differential Equations \textbf{411} (2024), 619--639.

\bibitem[LS18]{LuSchwarzacher2018}
Yong Lu and Sebastian Schwarzacher, \emph{Homogenization of the compressible {N}avier–-{S}tokes equations in domains with very tiny holes}, Journal of Differential Equations \textbf{265} (2018), no.~4, 1371 -- 1406.

\bibitem[Lu20]{Lu2020}
Yong Lu, \emph{Homogenization of {S}tokes equations in perforated domains: a unified approach}, J. Math. Fluid Mech. \textbf{22} (2020), no.~3, Paper No. 44, 13. \MR{4145838}

\bibitem[LY23]{LuYang2023}
Yong Lu and Peikang Yang, \emph{Homogenization of {E}volutionary {I}ncompressible {N}avier--{S}tokes {S}ystem in {P}erforated {D}omains}, Journal of Mathematical Fluid Mechanics \textbf{25} (2023), no.~1, 4.

\bibitem[NO23]{NecasovaOschmann2023}
{\v{S}}{\'a}rka Ne{\v{c}}asov{\'a} and Florian Oschmann, \emph{Homogenization of the two-dimensional evolutionary compressible {N}avier--{S}tokes equations}, Calculus of Variations and Partial Differential Equations \textbf{62} (2023), no.~6, 184.

\bibitem[NP22]{NecasovaPan2022}
{\v{S}}{\'a}rka Ne{\v{c}}asov{\'a} and Jiaojiao Pan, \emph{Homogenization problems for the compressible {N}avier--{S}tokes system in 2{D} perforated domains}, Mathematical Methods in the Applied Sciences \textbf{45} (2022), no.~12, 7859--7873.

\bibitem[NS04]{Novotny2004}
Antonín Novotn\'y and Ivan Stra\v{s}kraba, \emph{Introduction to the {M}athematical {T}heory of {C}ompressible {F}low}, OUP Oxford, New York, London, 2004.

\bibitem[OP23]{OschmannPokorny2023}
Florian Oschmann and Milan Pokorn{\'y}, \emph{Homogenization of the unsteady compressible {N}avier-{S}tokes equations for adiabatic exponent $\gamma> 3$}, Journal of Differential Equations \textbf{377} (2023), 271--296.

\bibitem[Pan25]{Pan2025}
Jiaojiao Pan, \emph{Homogenization of {N}on-{H}omogeneous {I}ncompressible {N}avier--{S}tokes {S}ystem in {C}ritically {P}erforated {D}omains}, Journal of Mathematical Fluid Mechanics \textbf{27} (2025), no.~2, 1--16.

\bibitem[PS21a]{PokornySkrisovsky2021}
Milan Pokorn{\'y} and Emil Sk{\v{r}}{\'\i}{\v{s}}ovsk{\'y}, \emph{Homogenization of the evolutionary compressible {N}avier--{S}tokes--{F}ourier system in domains with tiny holes}, Journal of Elliptic and Parabolic Equations (2021), 1--31.

\bibitem[PS21b]{PokornySkrisovsky2021b}
\bysame, \emph{Weak solutions for compressible {N}avier--{S}tokes--{F}ourier system in two space dimensions with adiabatic exponent almost one}, Acta Applicandae Mathematicae \textbf{172} (2021), no.~1, 1.

\bibitem[Soa72]{Soave1972}
Giorgio Soave, \emph{Equilibrium constants from a modified {R}edlich-{K}wong equation of state}, Chemical engineering science \textbf{27} (1972), no.~6, 1197--1203.

\bibitem[vdW73]{vanderWaals1873}
Johannes~Diderik van~der Waals, \emph{Over de {C}ontinuiteit van den {G}as - en {V}loeistoftoestand}, vol.~1, Sijthoff, 1873.

\bibitem[WP24]{WiedemannPeter2024}
David Wiedemann and Malte~A Peter, \emph{Homogenisation of the {S}tokes equations for evolving microstructure}, Journal of Differential equations \textbf{396} (2024), 172--209.

\bibitem[WP25]{WiedemannPeter2025}
\bysame, \emph{A {D}arcy law with memory by homogenisation for evolving microstructure}, Journal of Mathematical Analysis and Applications \textbf{546} (2025), no.~2, 129222.

\end{thebibliography}

\end{document}